\newtheorem{theorem}{Theorem}[section]
\newtheorem*{thm1}{Theorem~\ref{dim4uniqueness}}
\newtheorem*{thm0}{Theorem~\ref{pseudo_existence_2}}
\newtheorem{cor}[theorem]{Corollary}
\newtheorem*{thm2}{Theorem~\ref{diagram_to_pseudotrisection}}
\newtheorem{prop}[theorem]{Proposition}
\theoremstyle{definition}
\newtheorem{defn}[theorem]{Definition}
\newtheorem{lem}[theorem]{Lemma}
\newtheorem{eg}[theorem]{Example}
\newtheorem{remark}[theorem]{Remark}
\DeclareMathOperator{\rank}{rank}
\DeclareMathOperator{\Hom}{Hom}
\newcommand{\R}{\mathbb{R}}
\newcommand{\CP}{\mathbb{CP}}
\newcommand{\Z}{\mathbb Z}
\definecolor{WongRe}{RGB}{213, 94, 0}
\definecolor{WongBl}{RGB}{0, 114, 178}
\definecolor{WongGr}{RGB}{0, 158, 115}
\definecolor{WongPu}{RGB}{204, 121, 167}
\definecolor{WongCy}{RGB}{86, 180, 233}
\definecolor{WongOr}{RGB}{230, 159, 0}
\begin{document}
\title{Pseudo-trisections of four-manifolds with boundary}
\author{Shintaro Fushida-Hardy}
\address{Department of Mathematics, Stanford University}
\email{sfh@stanford.edu}
\urladdr{\href{https://stanford.edu/~sfh/}{https://stanford.edu/~sfh/}}

\begin{abstract}We introduce the concept of pseudo-trisections of smooth oriented compact 4-manifolds with boundary. The main feature of pseudo-trisections is that they have lower complexity than relative trisections for given 4-manifolds. We prove existence and uniqueness of pseudo-trisections, and further establish a one-to-one correspondence between pseudo-trisections and their diagrammatic representations. We next introduce the concept of pseudo-bridge trisections of neatly embedded surfaces in smooth oriented compact 4-manifolds. We develop a diagrammatic theory of pseudo-bridge trisections and provide examples of computations of invariants of neatly embedded surfaces in 4-manifolds using said diagrams.
\end{abstract}
	\maketitle 

	\section{Introduction}\label{introduction_section}
	Trisections of oriented compact 4-manifolds were introduced by Gay and Kirby in \cite{GayKir}, who established existence and uniqueness in the closed setting. In the relative setting---that is, for oriented compact 4-manifolds with boundary---a relative trisection induces an open book decomposition on the boundary. Gay and Kirby show existence of relative trisections inducing any given open book on the boundary, and uniqueness (up to stabilisation) given a fixed open book on the boundary. This uniqueness result was improved in Castro's PhD thesis \cite{CAS1} by the introduction of \emph{relative stabilisations}. Castro-Islambouli-Miller-Tomova \cite{CasIslMilTom} obtained a full uniqueness result by introducing \emph{relative double twists}.

	In this paper we introduce \emph{pseudo-trisections}, an alternative framework for studying 4-manifolds with boundary. Rather than inducing open book decompositions on the boundary, they induce 3-manifold trisections in the sense of Koenig \cite{Koe}. We prove an analogous existence result to the aforementioned result of Gay-Kirby:
	\begin{theorem}\label{pseudo_existence_2}Let $X$ be a compact oriented smooth 4-manifold with non-empty connected boundary $Y$. Let $\tau$ be a trisection of $Y$. Then there is a pseudo-trisection of $X$ which restricts to $\tau$ on the boundary.
	\end{theorem}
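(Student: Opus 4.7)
The plan is to adapt the Gay-Kirby Morse-theoretic construction of relative trisections to this setting, where the boundary data is a 3-manifold trisection in Koenig's sense \cite{Koe} rather than an open book decomposition. The overall strategy is to build a Morse function on $X$ that is compatible with $\tau$ on a collar of $Y$, and then to group its critical points by index so that their handle neighborhoods, combined with pieces coming from the collar, assemble into the three 4-dimensional pieces of a pseudo-trisection.

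Concretely, I would first choose a collar $c\colon Y \times [0,1] \hookrightarrow X$ of the boundary and extend $\tau = (Y_1, Y_2, Y_3)$ to a product structure on $c(Y \times [0,1])$, producing three preliminary 4-dimensional pieces $c(Y_i \times [0,1])$ that automatically restrict to $\tau$ on $\partial X$. Next, I would choose a Morse function $f\colon X \to \R$ whose restriction to the collar is the height coordinate (so that no critical points lie in the collar) and which is self-indexing on $X \setminus c(Y \times [0,1])$. After standard handle rearrangement, the critical points should be partitioned into three groups---roughly those of indices $\{0,1\}$, $\{2\}$, and $\{3,4\}$, possibly with modifications dictated by the precise definition of pseudo-trisection---whose thickenings, glued to the collar pieces, form the three 4-dimensional pieces $X_1, X_2, X_3$.

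The main obstacle is ensuring compatibility of this grouping with $\tau$ on the boundary: the pairwise intersection surfaces of the $X_i$ inside $X$ must restrict to those of $\tau$ on $\partial X$, and each $X_i$ must itself have the structure demanded by the pseudo-trisection axioms. This will require Cerf-theoretic handle slides, isotopies of attaching regions, and possibly the introduction of canceling $1$-$2$ or $2$-$3$ handle pairs, in order to place the intermediate level sets in standard position with respect to the trisection surface of $\tau$. This boundary-matching step is directly analogous to the technical core of the relative trisection arguments of \cite{GayKir} and \cite{CasIslMilTom}, and is where the bulk of the proof should lie. The fact that $\tau$ provides direct 3-manifold trisection data on $Y$---rather than an open book, which carries additional monodromy information---is presumably what allows pseudo-trisections to be built with strictly lower complexity than relative trisections, and should in fact make this rearrangement step easier than its classical counterpart.
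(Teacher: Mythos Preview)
Your proposed approach is genuinely different from the paper's, and the difference is worth spelling out.

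The paper does \emph{not} redo a Morse-theoretic construction. Instead it bootstraps from the already-known existence of relative trisections (which are pseudo-trisections by Proposition~\ref{relativeispseudo}): start with \emph{any} pseudo-trisection $\mathcal T$ of $X$, then use stabilisation moves to adjust the induced boundary trisection $\mathcal T|_Y$ until it coincides with $\tau$. The key new ingredient is an operation called \emph{boundary stabilisation shift} (Definition~\ref{bdy_stab_shift_defn}), defined diagrammatically, which exchanges a stabilisation of the boundary 3-manifold trisection for an internal stabilisation of the ambient pseudo-trisection. Since any two trisections of $Y$ are related by stabilisations and Heegaard stabilisations (Proposition~\ref{3stabunique}), one first boundary- and Heegaard-stabilises $\mathcal T$ until $\mathcal T|_Y$ becomes a common stabilisation $\tau'$ of both $\mathcal T|_Y$ and $\tau$, and then applies boundary stabilisation shifts to descend from $\tau'$ back to $\tau$. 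A separate short argument handles the trivial trisection of $S^3$ by capping off with a $4$-ball and puncturing a closed trisection.

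Your Morse-theoretic plan is not unreasonable in outline, but the step you yourself flag as ``the main obstacle'' is essentially the entire theorem, and you have given no mechanism for carrying it out. In the Gay--Kirby construction of relative trisections the boundary compatibility works because an open book carries a \emph{fibration}: all pages are diffeomorphic, and the monodromy supplies a natural circle-valued coordinate on the complement of the binding that meshes with level sets of the Morse function. A Koenig trisection $(Y_1,Y_2,Y_3)$ has no such fibration---the $Y_i$ may have pairwise distinct genera and the $\Sigma_i$ need not be diffeomorphic---so there is no analogue of ``angular position along the page'' to feed into the collar. Grouping interior critical points by index produces sectors whose traces on the inner boundary of the collar are dictated by the Morse theory (at best a Heegaard splitting coming from the $\{0,1\}$ versus $\{2,3\}$ handles), not by the prescribed $\tau$; the Cerf moves and cancelling pairs you invoke would have to convert one arbitrary boundary decomposition into another, which is precisely the problem the paper solves by its stabilisation-shift machinery rather than by handle rearrangement.
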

	On the other hand, we define three types of stabilisation (internal, boundary, and Heegaard) for pseudo-trisections, two of which extend notions of stabilisation defined for 3-manifold trisections. We show uniqueness of pseudo-trisections up to these moves.
	\begin{theorem}\label{dim4uniqueness}
		Let $X$ be a compact oriented smooth 4-manifold with non-empty connected boundary. Any two pseudo-trisections of $X$ are equivalent up to internal, boundary, and Heegaard stabilisation.
	\end{theorem}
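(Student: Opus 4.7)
The plan is to prove uniqueness in two stages, following the pattern common to uniqueness theorems for trisections: first reduce to the case of a fixed boundary trisection, and then prove uniqueness relative to that fixed boundary data.

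For the first stage, let $\mathcal{T}$ and $\mathcal{T}'$ be pseudo-trisections of $X$, inducing trisections $\tau$ and $\tau'$ on $\partial X$. By Koenig's uniqueness theorem \cite{Koe} for trisections of closed $3$-manifolds, $\tau$ and $\tau'$ admit a common stabilisation obtained by finitely many Heegaard stabilisations of each. The notion of Heegaard stabilisation of a pseudo-trisection introduced in this paper should be set up precisely so that it induces a Heegaard stabilisation of the boundary trisection; applying the appropriate sequences of such moves to $\mathcal{T}$ and $\mathcal{T}'$ therefore produces pseudo-trisections with a common boundary trisection, and reduces the problem to the case $\tau = \tau'$.

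For the second stage, I would translate each pseudo-trisection into a generic smooth map $f : X \to D^2$ (or equivalently a suitable trisected Morse $2$-function) whose restriction to a collar of $\partial X$ realises the fixed boundary trisection. Two such maps can be joined by a generic $1$-parameter family $\{f_t\}_{t \in [0,1]}$ whose singular locus consists of finitely many codimension-$1$ events, following the Cerf-theoretic template used by Gay--Kirby in \cite{GayKir}. I would then verify that each such event corresponds to either an internal stabilisation, when it occurs in the interior of $X$, or a boundary stabilisation, when it occurs in the boundary collar, producing a common stabilisation of $\mathcal{T}$ and $\mathcal{T}'$.

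The main obstacle will be the second stage, specifically the analysis of generic $1$-parameter families near $\partial X$. Unlike the closed case of \cite{GayKir}, one must track how codimension-$1$ singularities interact with the collar of the boundary; unlike the relative-trisection uniqueness results in \cite{CasIslMilTom}, the boundary constraint is a $3$-manifold trisection rather than an open book, which changes the allowed boundary behaviour and the local models for singularities meeting $\partial X$. The heart of the proof is to classify the codimension-$1$ boundary singularities and show that the boundary stabilisation move suffices to resolve each of them—i.e., that the list of three stabilisations is complete and no further move is needed.
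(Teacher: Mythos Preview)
Your two-stage plan matches the paper's overall structure, but there is a concrete error in Stage~1 and a genuine strategic divergence in Stage~2.

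\textbf{Stage 1.} You misidentify the moves. Koenig's theorem~\cite{Koe} does \emph{not} say that 3-manifold trisections become equivalent after Heegaard stabilisations; it says they become equivalent after \emph{stabilisations} in the sense of Definition~\ref{3manstab} (and even then excludes the trivial trisection of $S^3$). The 4-dimensional lift of Koenig's stabilisation is the paper's \emph{boundary stabilisation}, not Heegaard stabilisation. The paper's actual Stage~1 invokes Proposition~\ref{3stabunique}, which uses \emph{both} stabilisation and Heegaard stabilisation on the boundary, and then lifts these to boundary stabilisations and Heegaard stabilisations of the pseudo-trisections respectively. As written, your Stage~1 would not in general make the boundary trisections agree.

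\textbf{Stage 2.} Here your approach is genuinely different and, as you anticipate, harder. You propose to analyse a generic 1-parameter family $f_t:X\to D^2$ and classify codimension-1 singularities near $\partial X$, checking that each is realised by a boundary stabilisation. The paper sidesteps this analysis entirely with a trick: after the boundaries already agree, it performs \emph{further} boundary stabilisations along a maximal collection of arcs in $\Sigma_3$ so that $\Sigma_3$ becomes a disk. Then $Y_2\cup Y_3$ is a handlebody, $Y_1\cup(Y_2\cup Y_3)$ is an honest Heegaard splitting, and $X$ can be viewed as a cobordism from $Y_2\cup Y_3$ to $Y_1$. At that point the paper simply invokes Theorem~21 of \cite{GayKir} (closed-case Cerf theory) to relate the two pseudo-trisections by internal stabilisations alone. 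No boundary Cerf analysis is needed. Your direct approach might work, but the burden of classifying boundary singularities and matching them to boundary stabilisations is exactly what the paper's reduction avoids.
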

	The main feature of pseudo-trisections is that they are general enough to exist with lower \emph{complexity} than relative trisections. Complexity is an analogue of \emph{trisection genus} and is defined in Section \ref{4man_section}. Figure \ref{cplx_table} lists complexities of some simple 4-manifolds to demonstrate the discrepancy in complexity. Example \ref{cplx_big_gap} shows that $c(\natural^\ell (S^2\times D^2)) \leq \ell$ among pseudo-trisections, whereas we expect that $c(\natural^\ell (S^2\times D^2)) \geq 3\ell$ among relative trisections. (The latter would be true if relative trisection genus were additive under connected sum.)

	We also introduce \emph{pseudo-trisection diagrams} as a method for representing pseudo-trisections, analogously to trisection diagrams \cite{GayKir} and relative trisection diagrams \cite{GayCaiCas2}. In each of these settings, diagrams are shown to be in one-to-one correspondence with (relative) trisections when modding out by appropriate moves. We obtain an analogous result:

	\begin{theorem}\label{diagram_to_pseudotrisection}The \emph{realisation map} $$\mathcal R : \{\text{pseudo-trisection diagrams}\} \to \frac{\{\text{pseudo-trisections}\}}{\text{diffeomorphism}}$$ induces a bijection
	$$\frac{\Big\{\text{pseudo-trisection diagrams}\Big\}}{\begin{matrix}\text{band and torus stabilisation,}\\ \text{handleslide, isotopy}\end{matrix}} \longrightarrow \frac{\Big\{\begin{matrix}\text{compact oriented 4-manifolds}\\ \text{with one boundary component}\end{matrix}\Big\}}{\text{diffeomorphism}}.$$
	\end{theorem}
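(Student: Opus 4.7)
The plan is to factor the realisation map through the set of diffeomorphism classes of pseudo-trisections, so that Theorem~\ref{diagram_to_pseudotrisection} decouples into two separate bijections. Explicitly, I would rewrite the map as
\[
\frac{\{\text{diagrams}\}}{\text{handleslide, isotopy}} \xrightarrow{\mathcal R_0} \frac{\{\text{pseudo-trisections}\}}{\text{diffeomorphism}} \xrightarrow{\text{forget}} \frac{\{\text{4-manifolds}\}}{\text{diffeomorphism}},
\]
show that $\mathcal R_0$ is a bijection, and show that the forgetful map becomes a bijection after further quotienting its domain by band and torus stabilisation. Composing the two reductions yields the theorem as stated.

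For the first bijection, well-definedness amounts to verifying that a handleslide or an isotopy of a cut system on the central surface induces a diffeomorphism of the associated pseudo-trisection; this should follow from the local models for the three 4-dimensional sectors, each being a handlebody thickening built over the central surface. Surjectivity of $\mathcal R_0$ is a direct construction: given a pseudo-trisection, read off the three cut systems on the central surface. Injectivity is an analogue of the classical theorem that any two systems of meridian disks for a handlebody are related by handleslides and isotopies, and it will require a parametrised version of that statement adapted to the central surface of a pseudo-trisection, which carries corners and a distinguished boundary trisection.

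For the second bijection, surjectivity is immediate from Theorem~\ref{pseudo_existence_2}. Injectivity uses Theorem~\ref{dim4uniqueness}: any two pseudo-trisections of a fixed 4-manifold are related by a sequence of internal, boundary, and Heegaard stabilisations. One then checks, by inspecting the local model of each stabilisation on the central surface and its cut systems, that internal and boundary stabilisations correspond to band stabilisations of the diagram while Heegaard stabilisations correspond to torus stabilisations (or vice versa, depending on the chosen conventions). This step is essentially bookkeeping, parallel to the closed case in \cite{GayKir}.

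The main obstacle is the injectivity of $\mathcal R_0$, namely the claim that two diagrams realising diffeomorphic pseudo-trisections are related by handleslides and isotopies alone. My strategy would be to reduce an abstract diffeomorphism between two pseudo-trisections to one that fixes the central surface setwise, and then apply the handlebody-level cut-system uniqueness result to each of the three 4-dimensional sectors in turn, ensuring compatibility along the pairwise intersections. The delicate point is preventing the boundary trisection $\tau$ and the corner structure on the central surface from forcing additional moves beyond handleslide and isotopy; this will require showing that the mapping class group of the central surface fixing the $\tau$-data acts through compositions of the allowed moves.
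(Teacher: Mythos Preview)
Your overall strategy matches the paper's proof: factor through diffeomorphism classes of pseudo-trisections, use a meridian-system uniqueness result for the first step, and use Theorem~\ref{dim4uniqueness} together with a dictionary between diagram moves and geometric stabilisations for the second. However, two concrete errors would derail the execution.

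First, your dictionary between stabilisations is wrong. In the paper (Propositions~\ref{torus_stab_equiv} and~\ref{band_stab_equiv}), \emph{band} stabilisation corresponds to \emph{boundary} stabilisation alone, while \emph{torus} stabilisation (in two types) corresponds to both \emph{internal} and \emph{Heegaard} stabilisation. Your proposed grouping (internal and boundary $\to$ band, Heegaard $\to$ torus) is incorrect, and since these moves act differently on the underlying surfaces, this is not a matter of convention.

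Second, your description of a pseudo-trisection diagram as ``three cut systems on the central surface'' and the plan to apply cut-system uniqueness ``to each of the three 4-dimensional sectors'' misreads the structure. A pseudo-trisection diagram consists of \emph{four} surfaces $\Sigma_C, \Sigma_1, \Sigma_2, \Sigma_3$ and \emph{six} cut systems $\boldsymbol\alpha_i, \boldsymbol\delta_i$, each living on a pair of surfaces glued along their common boundary. The uniqueness argument (Lemma~\ref{handleslides_isotopies_lem}) applies Johannson's meridian-system result to each of the six \emph{3-dimensional} handlebodies $H_i$ and $Y_i$, not to the 4-dimensional sectors $X_i$. Once you see this, the ``delicate point'' you flag about the corner structure and the mapping class group disappears: the six handlebodies are ordinary 3-dimensional handlebodies, and the classical statement that any two meridian systems differ by handleslides and isotopies applies directly, with no parametrised or cornered refinement needed.
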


	Combined with the seeming lower complexity of pseudo-trisections, this results in a diagrammatic theory for 4-manifolds with boundary with lower complexity than relative trisection diagrams. Along the way, we introduce \emph{triple Heegaard diagrams} to represent trisections of closed 3-manifolds. A pseudo-trisection diagram of a pseudo-trisected 4-manifold has the convenient property of restricting to a triple Heegaard diagram of the induced trisection on the boundary 3-manifold. 

	In the final section of the article we extend our diagrammatic calculus from 4-manifolds with boundary to pairs $(X, \mathcal K)$ where $\mathcal K$ is a surface embedded in $X$ in \emph{pseudo-bridge position}. This generalises the theory of bridge trisections introduced by Meier-Zupan \cite{MeiZup2} in the closed setting, and by Meier \cite{Meier} in the relative setting. Pseudo-bridge trisections are defined to be compatible with pseudo-trisections, in much the same way that relative bridge trisections are compatible with relative trisections. Consequently we can define \emph{pseudo-shadow diagrams}, which consist of an underlying pseudo-trisection diagram to encode the ambient 4-manifold, and additional arcs to encode the embedded surface. 

	We finish by exploring several examples of pseudo-shadow diagrams, such as a M\"obius strip in $\CP^2 - B^4$, and a slice disk of a trefoil knot in $\CP^2 - B^4$. Through these examples we demonstrate how to compute some invariants from pseudo-shadow diagrams, namely Euler characteristic, orientability, and homology class.

	\subsection*{Conventions}\label{conventions}
	Hereafter $Y$ refers to connected closed smooth 3-manifolds, and $X$ to compact smooth oriented 4-manifolds with one boundary component. The boundary of $X$ is denoted by $Y$. The indices $i$ and $j$ are taken to be in $\Z/3\Z$. Bolded symbols such as $\boldsymbol{k}$ and $\boldsymbol{y}$ represent triples of three non-negative integers $k_1, k_2, k_3$, or $y_1, y_2, y_3$. We write $|\boldsymbol{k}|$ to mean the 1-norm, which in this case is just the sum of the $k_i$. We also write $\boldsymbol \delta$ and $\boldsymbol \alpha$ to represent certain \emph{collections} of loops. The loops themselves are unbolded, for example $\delta_1$ or $\alpha$.

We say $N \subset M$ is \emph{neatly embedded} if $\partial N \subset \partial M$, and $N$ is transverse to $\partial M$. Unless otherwise stated, any arcs in surfaces with boundary or 3-manifolds with boundary are assumed to be neatly embedded. We write $\mathcal K$ to denote a connected compact surface with boundary neatly embedded in $X$. The boundary of $\mathcal K$ is a link $L$ in $Y$. 
	\subsection*{Organisation of the paper}
	In section \ref{3man_section} we review 3-manifold trisections and introduce triple Heegaard diagrams. In section \ref{4man_section} we introduce pseudo-trisections, three notions of stabilisation, and prove Theorem \ref{dim4uniqueness}. Further, pseudo-trisection are compared with relative trisections. In section \ref{diagrams}
	we introduce pseudo-trisection diagrams, and prove Theorems \ref{pseudo_existence_2} and \ref{diagram_to_pseudotrisection}. In section \ref{bridge_section} we introduce pseudo-bridge trisections and pseudo-shadow diagrams, and study several examples.
	\subsection*{Acknowledgements}
	The author is deeply grateful to Ciprian Manolescu for his continued support across many meetings, and Maggie Miller for many helpful conversations. Further, the author thanks Cole Hugelmeyer, Gabriel Islambouli, and Peter Lambert-Cole for helpful discussions. This work was supported in part by NSF grant DMS-2003488.

	\section{Trisections of 3-manifolds}\label{3man_section}
In this section we summarise some definitions concerning trisections of 3-manifolds, introduced by Dale Koenig \cite{Koe}. In particular, we describe stabilisations and Koenig's result that (almost all) 3-manifold trisections are stably equivalent in this more general sense. Next, we introduce \emph{Heegaard stabilisations}, inspired by stabilisations of Heegaard splittings. We show that any two trisections for any 3-manifold are equivalent up to stabilisation and Heegaard stabilisation. Next, we introduce \emph{triple Heegaard diagrams}, an analogue of Heegaard diagrams for trisections of 3-manifolds. Finally we briefly discuss complexity of 3-manifold trisections.

\subsection{3-manifold trisections and stabilisation} 
\label{3manbasics}
In this subsection we review the theory of 3-manifold trisections introduced in \cite{Koe}.

\begin{defn}\cite{Koe} A $(\boldsymbol{y}, b)$-trisection of a 3-manifold $Y$ is a decomposition $Y = Y_1 \cup Y_2 \cup Y_3$ such that
	\begin{itemize}
		\item each $Y_i$ is a handlebody of genus $y_i$,
		\item each $\Sigma_{i} = Y_{i-1} \cap Y_i$ is a compact connected surface with some genus $p_i$ and boundary $B$, and
		\item $B = Y_1 \cap Y_2 \cap Y_3$ is a $b$-component link.
	\end{itemize}
	The link $B \subset Y$ is the \emph{binding} of the trisection. See Figure \ref{trisection_figure} for a schematic for how the pieces fit together.
\end{defn}
\begin{figure}	
	\noindent\hspace{20px}
\begingroup%
  \makeatletter%
  \providecommand\color[2][]{%
    \errmessage{(Inkscape) Color is used for the text in Inkscape, but the package 'color.sty' is not loaded}%
    \renewcommand\color[2][]{}%
  }%
  \providecommand\transparent[1]{%
    \errmessage{(Inkscape) Transparency is used (non-zero) for the text in Inkscape, but the package 'transparent.sty' is not loaded}%
    \renewcommand\transparent[1]{}%
  }%
  \providecommand\rotatebox[2]{#2}%
  \newcommand*\fsize{\dimexpr\f@size pt\relax}%
  \newcommand*\lineheight[1]{\fontsize{\fsize}{#1\fsize}\selectfont}%
  \ifx\svgwidth\undefined%
    \setlength{\unitlength}{218.29151239bp}%
    \ifx\svgscale\undefined%
      \relax%
    \else%
      \setlength{\unitlength}{\unitlength * \real{\svgscale}}%
    \fi%
  \else%
    \setlength{\unitlength}{\svgwidth}%
  \fi%
  \global\let\svgwidth\undefined%
  \global\let\svgscale\undefined%
  \makeatother%
  \begin{picture}(1,0.78303152)%
    \lineheight{1}%
    \setlength\tabcolsep{0pt}%
    \put(0,0){\includegraphics[width=\unitlength,page=1]{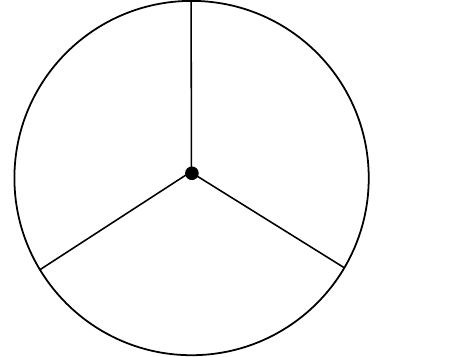}}%
    \put(0.05,0.65){\color[rgb]{0,0,0}\makebox(0,0)[lt]{\lineheight{1.25}\smash{\begin{tabular}[t]{l}$Y$\end{tabular}}}}%
    \put(0.4,0.18479974){\makebox(0,0)[lt]{\lineheight{1.25}\smash{\begin{tabular}[t]{l}\textcolor{WongRe}{$Y_1$}\end{tabular}}}}%
    \put(0.56938348,0.48074151){\makebox(0,0)[lt]{\lineheight{1.25}\smash{\begin{tabular}[t]{l}\textcolor{WongBl}{$Y_2$}\end{tabular}}}}%
    \put(0.20016589,0.48088031){\makebox(0,0)[lt]{\lineheight{1.25}\smash{\begin{tabular}[t]{l}\textcolor{WongGr}{$Y_3$}\end{tabular}}}}%
    \put(0.45117355,0.41553543){\makebox(0,0)[lt]{\lineheight{1.25}\smash{\begin{tabular}[t]{l}$B$\end{tabular}}}}%
    \put(0.6,0.3){\makebox(0,0)[lt]{\lineheight{1.25}\smash{\begin{tabular}[t]{l}$\Sigma_2$\end{tabular}}}}%
    \put(0.35,0.6){\makebox(0,0)[lt]{\lineheight{1.25}\smash{\begin{tabular}[t]{l}$\Sigma_3$\end{tabular}}}}%
    \put(0.25,0.25){\makebox(0,0)[lt]{\lineheight{1.25}\smash{\begin{tabular}[t]{l}$\Sigma_1$\end{tabular}}}}%
  \end{picture}%
\endgroup%

	\caption{The components of a trisection of a 3-manifold $Y$.}
\label{trisection_figure}
\end{figure}
\begin{prop}\label{3mantriprop}3-manifold trisections satisfy the following properties:
	\begin{enumerate}
		\item Each $y_i$ is given by $p_i + p_{i+1} + b - 1$. In particular, $y_i$ is bounded below by $b-1$.
		\item The genera $p_i$ of the surfaces $\Sigma_i$ are given by $\frac{1}{2}(y_{i-1} + y_i - y_{i+1} - b + 1)$.
	\end{enumerate}
\end{prop}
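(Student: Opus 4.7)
The plan is to derive both identities from a single Euler-characteristic computation on the boundary of each handlebody $Y_i$. The key observation is that $\partial Y_i = \Sigma_i \cup_B \Sigma_{i+1}$: the two trisection surfaces meeting $Y_i$ are $\Sigma_i = Y_{i-1}\cap Y_i$ and $\Sigma_{i+1} = Y_i\cap Y_{i+1}$, and these are glued along the binding $B = Y_1\cap Y_2\cap Y_3$, which is their common boundary.

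For (1), I would first note that each $\Sigma_i$ is a compact connected surface of genus $p_i$ with $b$ boundary circles, so $\chi(\Sigma_i) = 2 - 2p_i - b$. Gluing $\Sigma_i$ to $\Sigma_{i+1}$ along the $b$ boundary circles (which contribute zero to the Euler characteristic) gives
\[
\chi(\partial Y_i) = \chi(\Sigma_i) + \chi(\Sigma_{i+1}) = 4 - 2p_i - 2p_{i+1} - 2b.
\]
Since $Y_i$ is a handlebody of genus $y_i$, its boundary is a closed orientable surface of genus $y_i$, so $\chi(\partial Y_i) = 2 - 2y_i$. (Connectedness of $\partial Y_i$ is automatic from that of a handlebody, though it also follows from the fact that $\Sigma_i$ and $\Sigma_{i+1}$ share all $b$ binding components.) Equating the two expressions yields $y_i = p_i + p_{i+1} + b - 1$. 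The lower bound $y_i \geq b-1$ is then immediate from $p_i,p_{i+1}\geq 0$.

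For (2), I would simply invert the linear system. Since indices are taken in $\Z/3\Z$, forming the combination $y_{i-1} + y_i - y_{i+1}$ and substituting from (1) gives
\[
y_{i-1} + y_i - y_{i+1} = (p_{i-1} + p_i) + (p_i + p_{i+1}) - (p_{i+1} + p_{i+2}) + (b-1) = 2p_i + b - 1,
\]
using $p_{i-1} = p_{i+2}$ modulo $3$. Solving for $p_i$ yields the stated formula.

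The only subtlety worth flagging is the gluing computation in (1), namely verifying that $\partial Y_i$ really is $\Sigma_i \cup_B \Sigma_{i+1}$ as a closed surface; this follows directly from the definition because $Y_i$ meets $Y_{i-1}$ in $\Sigma_i$ and meets $Y_{i+1}$ in $\Sigma_{i+1}$, and the shared boundary of the two surfaces is precisely the triple intersection $B$. Everything else reduces to elementary Euler-characteristic bookkeeping and a $3\times 3$ linear inversion, so I do not anticipate any real obstacle.
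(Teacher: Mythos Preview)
Your proof is correct and follows essentially the same approach as the paper: the paper simply states that (1) follows from $\partial Y_i = \Sigma_i \cup_B \Sigma_{i+1}$ (your Euler-characteristic computation makes this explicit) and that (2) is obtained by solving the resulting simultaneous equations, which is exactly your linear inversion.
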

\begin{proof}The first fact follows from the fact that $Y_i$ is a handlebody with boundary $\Sigma_i \cup \Sigma_{i+1}$, where the surfaces are identified along their boundaries. The second fact is obtained by solving the simultaneous equations in the first fact.
\end{proof}

\begin{eg}The \emph{trivial trisection} is a decomposition of $S^3$ into three 3-balls, with any two balls meeting along a common disk. The triple intersection of these balls is an unknot.
\end{eg}
\begin{eg}\label{3admits}Every 3-manifold admits a trisection: one construction is to start with a Heegaard splitting $Y = H_1 \cup H_2$. Then consider a contractible loop $\gamma$ on $\Sigma = \partial H_1$. This bounds a disk $D$ in $\Sigma$, and a neatly embedded parallel disk $D'$ in $H_1$ (by isotoping $D$). The surface $D\cup D'$ bounds a ball $B$ in $H_1$. Now
	$$(H_1 - B, B, H_2)$$
	is a trisection of $Y$, with $B = \gamma$.

	Another recipe to construct trisections is to start with an open book decomposition. Details for this example (and further examples) are provided in \cite{Koe}. 
\end{eg}
\begin{eg}\label{connected_sum} There is a (non-unique) \emph{connected sum} of trisected 3-manifolds. Specifically one can delete a standard neighbourhood of a point in the binding of two trisections. Now each 3-manifold has spherical boundary, and the spheres have induced decompositions into three wedges. An (oriented) connected sum along these spheres respecting the decomposition into wedges produces a trisection of the connected sum of the 3-manifolds.
\end{eg}
\begin{defn}\label{3manstab} Given a trisection $(Y_1, Y_2, Y_3)$ of a 3-manifold $Y$, a \emph{stabilisation} is a new trisection $(Y_1', Y_2', Y_3')$ constructed as follows:
	\begin{enumerate}
		\item Choose a neatly embedded non-separating arc $\alpha$ in $\Sigma_i$ for some $i \in \{1,2,3\}$. (Such an arc exists provided $\Sigma_i$ is not a disk.)
		\item Let $N(\alpha)$ be a tubular neighbourhood of $\alpha$ in $Y$, and define
			\begin{itemize}
				\item $Y_{i+1}' = Y_{i+1} \cup \overline{N(\alpha)}$,
				\item $Y_{i}' = Y_{i} - N(\alpha)$,
				\item $Y_{i-1}' = Y_{i-1} - N(\alpha)$.
			\end{itemize}
	\end{enumerate}
\end{defn}

See Figure \ref{stab_figure} for a schematic of stabilisation, given a non-separating arc $\alpha$ in $\Sigma_1$.

\begin{figure}
	\noindent\hspace{80px}
\begingroup%
  \makeatletter%
  \providecommand\color[2][]{%
    \errmessage{(Inkscape) Color is used for the text in Inkscape, but the package 'color.sty' is not loaded}%
    \renewcommand\color[2][]{}%
  }%
  \providecommand\transparent[1]{%
    \errmessage{(Inkscape) Transparency is used (non-zero) for the text in Inkscape, but the package 'transparent.sty' is not loaded}%
    \renewcommand\transparent[1]{}%
  }%
  \providecommand\rotatebox[2]{#2}%
  \newcommand*\fsize{\dimexpr\f@size pt\relax}%
  \newcommand*\lineheight[1]{\fontsize{\fsize}{#1\fsize}\selectfont}%
  \ifx\svgwidth\undefined%
    \setlength{\unitlength}{271.59929586bp}%
    \ifx\svgscale\undefined%
      \relax%
    \else%
      \setlength{\unitlength}{\unitlength * \real{\svgscale}}%
    \fi%
  \else%
    \setlength{\unitlength}{\svgwidth}%
  \fi%
  \global\let\svgwidth\undefined%
  \global\let\svgscale\undefined%
  \makeatother%
  \begin{picture}(1,0.69975261)%
    \lineheight{1}%
    \setlength\tabcolsep{0pt}%
    \put(0,0){\includegraphics[width=\unitlength,page=1]{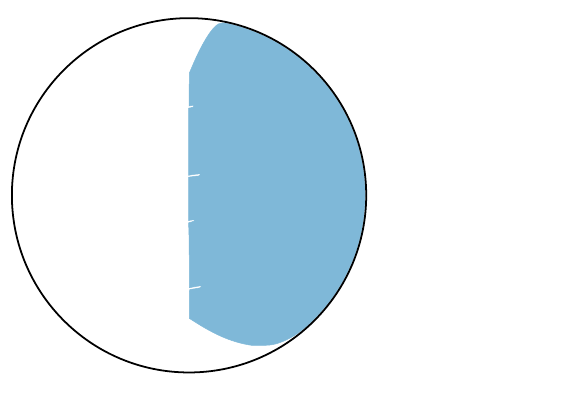}}%
    \put(0.21464433,0.00611252){\makebox(0,0)[lt]{\lineheight{1.25}\smash{\begin{tabular}[t]{l}\textcolor{WongRe}{$Y_1'$}\end{tabular}}}}%
    \put(0.04136719,0.27237469){\makebox(0,0)[lt]{\lineheight{1.25}\smash{\begin{tabular}[t]{l}$N(\alpha)$\end{tabular}}}}%
    \put(0.56157046,0.09091891){\makebox(0,0)[lt]{\lineheight{1.25}\smash{\begin{tabular}[t]{l}$\Sigma_2$\end{tabular}}}}%
    \put(0.39181825,0.67794891){\makebox(0,0)[lt]{\lineheight{1.25}\smash{\begin{tabular}[t]{l}$\Sigma_3$\end{tabular}}}}%
    \put(-0.00031641,0.17965489){\makebox(0,0)[lt]{\lineheight{1.25}\smash{\begin{tabular}[t]{l}$\Sigma_1$\end{tabular}}}}%
    \put(0,0){\includegraphics[width=\unitlength,page=2]{stab.pdf}}%
    \put(0.07101781,0.61048173){\makebox(0,0)[lt]{\lineheight{1.25}\smash{\begin{tabular}[t]{l}\textcolor{WongGr}{$Y_3'$}\end{tabular}}}}%
    \put(0.64827866,0.45256121){\makebox(0,0)[lt]{\lineheight{1.25}\smash{\begin{tabular}[t]{l}\textcolor{WongBl}{$Y_2'$}\end{tabular}}}}%
  \end{picture}%
\endgroup%

	\caption{A stabilisation of a trisection of a 3-manifold.}
	\label{stab_figure}
\end{figure}

	Notice that stabilisation increases $y_{i+1}$ by 1 while leaving $y_i$ and $y_{i-1}$ unchanged. On the other hand, $b$ increases by 1 if both endpoints of $\alpha$ are in the same component of $B$, and decreases by 1 if the endpoints are in different components. See \cite{Koe} for examples of stabilisations.

	\begin{remark} Provided $Y$ is not $S^3$, any trisection of $Y$ can be stabilised, since at least one $\Sigma_i$ must not be a disk. In Subsection \ref{heegaardstab} we introduce a second notion of stabilisation to deal with the $S^3$ case.
\end{remark}

We say that two trisections $\mathcal{T}$ and $\mathcal{T}'$ of $Y$ are \emph{isotopic} if there is a diffeomorphism from $Y$ to itself, isotopic to the identity, sending the $i$th sector of $\mathcal{T}$ to the $i$th sector of $\mathcal{T}'$. Futher, we say a trisection is a \emph{stabilisation} of another if it can be obtained by a finite sequence of stabilisations up to isotopy. Two trisections are said to be \emph{equivalent up to stabilisation} or \emph{stably equivalent} if there is a third trisection which is a stabilisation of each of the other two.

\begin{theorem}[Koenig \cite{Koe}]\label{dim3stab} Let $\mathcal{T}, \mathcal{T}'$ be two trisections of a closed orientable 3-manifold $Y$. If $Y$ is the 3-sphere, assume neither trisections are the trivial trisection. Then $\mathcal T$ and $\mathcal T'$ are equivalent up to stabilisation.
\end{theorem}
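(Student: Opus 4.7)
The natural strategy is Cerf theory on maps $Y \to T$ where $T$ is a model triangle partitioned into three sectors meeting at a central point, mirroring how Reidemeister-Singer is proved for Heegaard splittings and how Gay-Kirby handle uniqueness for 4-manifold trisections.

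First I would package each trisection $(Y_1, Y_2, Y_3)$ as a sweepout: a smooth map $\pi : Y \to T$ such that $\pi^{-1}(\text{sector}_i) = Y_i$, the preimages of the three radial edges recover the Heegaard surfaces $\Sigma_i$, and $\pi^{-1}(\text{centre}) = B$. Existence of such a $\pi$ amounts to choosing compatible Morse functions on each handlebody $Y_i$ that agree on the common surfaces and realise $B$ as their joint boundary locus; a generic choice of gradient-like data also makes $\pi$ a \emph{stable} map in an appropriate function space.

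Next, given two trisections $\mathcal{T}, \mathcal{T}'$ of $Y$ with sweepouts $\pi_0, \pi_1$, I would interpolate through a generic 1-parameter family $\pi_t$. Standard Cerf theory then describes the singular times along this family as a finite list of elementary events: births and deaths of critical points of the fold locus, handle slides, and tangencies between fold arcs and the sector walls. The key step is to verify that each such event either corresponds to an ambient isotopy of the trisection, or can be realised by a trisection stabilisation (Definition \ref{3manstab}). At the level of $\pi$, stabilising along a non-separating arc $\alpha \subset \Sigma_i$ tubes $Y_{i+1}$ along $\alpha$, which pushes a fold arc across a radial edge of $T$; so in principle every Cerf event that modifies the sector structure should be expressible as such a push, after possibly isotoping $\alpha$.

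The main obstacle is this last realisation step. Definition \ref{3manstab} provides only \emph{forward} stabilisations, so any Cerf event that would destabilise must be absorbed by inserting enough extra stabilisations at the ends of the path, so that $\mathcal{T}$ and $\mathcal{T}'$ acquire a common stabilisation rather than being connected directly. The inductive bookkeeping here is where most of the work lies. The $S^3$ restriction enters at the base case: in the trivial trisection each $\Sigma_i$ is a disk, so no non-separating arc exists and the stabilisation move is unavailable, which is precisely the obstruction one must exclude by hypothesis; in every other case, Proposition \ref{3mantriprop} guarantees enough genus in some $\Sigma_i$ for a non-separating arc to be found, and the inductive argument goes through.
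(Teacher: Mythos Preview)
The paper does not actually supply a proof of this theorem; it is stated as a result of Koenig \cite{Koe} and used as a black box. So there is no in-paper argument to compare your sketch against.

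That said, a word on your strategy. A Cerf-theoretic approach via generic homotopies of maps $Y\to T$ is plausible in spirit, but the step you flag as the ``main obstacle'' is genuinely the whole theorem: you would need to classify the elementary Cerf events for such maps and check that each one either is an isotopy or factors through a sequence of stabilisations in the sense of Definition~\ref{3manstab}. You have not done this, and it is not obvious a priori that the catalogue of events is small or that each is realisable by a single arc-stabilisation. Your remark that Proposition~\ref{3mantriprop} ``guarantees enough genus'' is also slightly off: that proposition only gives the formulae $p_i=\tfrac12(y_{i-1}+y_i-y_{i+1}-b+1)$; the actual reason some $\Sigma_i$ is not a disk away from the trivial trisection is that if all three $\Sigma_i$ were disks then all $Y_i$ would be $3$-balls and $Y$ would be $S^3$ with the trivial trisection.

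For comparison, the argument Koenig gives (and which the present paper echoes in its proof of Theorem~\ref{dim4uniqueness}) avoids building new Cerf theory entirely: one stabilises each trisection along a maximal collection of non-separating arcs in some $\Sigma_i$ until that $\Sigma_i$ becomes a disk, at which point $Y_{i-1}\cup Y_i$ is a single handlebody and the trisection is a Heegaard splitting with one sector subdivided. One then invokes the classical Reidemeister--Singer theorem to find a common (Heegaard) stabilisation, and checks that Heegaard stabilisations of such ``degenerate'' trisections can be realised by sequences of trisection stabilisations. This route is shorter and reuses an established uniqueness theorem rather than reproving one from scratch.
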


\subsection{Heegaard stabilisation}
In this subsection we introduce \emph{Heegaard stabilisation}. 
\label{heegaardstab}
	\begin{defn}Given a trisection $(Y_1, Y_2, Y_3)$ of a 3-manifold $Y$, a \emph{Heegaard stabilisation} is a new trisection $(Y_1', Y_2', Y_3')$ constructed as follows:
		\begin{enumerate}
			\item Choose a neatly embedded boundary parallel arc $\alpha$ in $Y_i$, with both endpoints on $\Sigma_i$.
			\item Let $N(\alpha)$ be a tubular neighbourhood of $\alpha$ in $Y$, and define
				\begin{itemize}
					\item $Y_{i+1}' = Y_{i+1}$,
					\item $Y_{i}' = Y_{i} - N(\alpha)$,
					\item $Y_{i-1}' = Y_{i-1} \cup \overline{N(\alpha)}$.
				\end{itemize}
		\end{enumerate}
\end{defn}

See Figure \ref{heeg_stab_figure} for a schematic of Heegaard stabilisation, given a non-separating arc $\alpha$ in $Y_1$ with endpoints on $\Sigma_1$. Requiring $\alpha$ to be boundary parallel ensures that each new sector is a 3-dimensional 1-handlebody.

Heegaard stabilisation increases $y_i$ and $y_{i-1}$ by 1 while leaving $y_{i+1}$ and $b$ unchanged.

\begin{figure}
	\noindent\hspace{75px}
\begingroup%
  \makeatletter%
  \providecommand\color[2][]{%
    \errmessage{(Inkscape) Color is used for the text in Inkscape, but the package 'color.sty' is not loaded}%
    \renewcommand\color[2][]{}%
  }%
  \providecommand\transparent[1]{%
    \errmessage{(Inkscape) Transparency is used (non-zero) for the text in Inkscape, but the package 'transparent.sty' is not loaded}%
    \renewcommand\transparent[1]{}%
  }%
  \providecommand\rotatebox[2]{#2}%
  \newcommand*\fsize{\dimexpr\f@size pt\relax}%
  \newcommand*\lineheight[1]{\fontsize{\fsize}{#1\fsize}\selectfont}%
  \ifx\svgwidth\undefined%
    \setlength{\unitlength}{272.77526551bp}%
    \ifx\svgscale\undefined%
      \relax%
    \else%
      \setlength{\unitlength}{\unitlength * \real{\svgscale}}%
    \fi%
  \else%
    \setlength{\unitlength}{\svgwidth}%
  \fi%
  \global\let\svgwidth\undefined%
  \global\let\svgscale\undefined%
  \makeatother%
  \begin{picture}(1,0.69673587)%
    \lineheight{1}%
    \setlength\tabcolsep{0pt}%
    \put(0,0){\includegraphics[width=\unitlength,page=1]{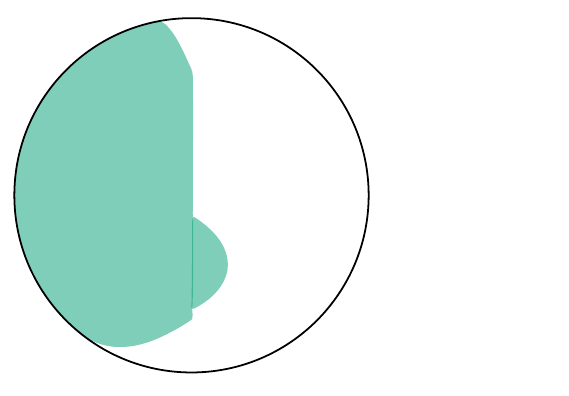}}%
    \put(0.21803018,0.00608616){\makebox(0,0)[lt]{\lineheight{1.25}\smash{\begin{tabular}[t]{l}\textcolor{WongRe}{$Y_1'$}\end{tabular}}}}%
    \put(0.40246165,0.28358598){\makebox(0,0)[lt]{\lineheight{1.25}\smash{\begin{tabular}[t]{l}$N(\alpha)$\end{tabular}}}}%
    \put(0.56346058,0.09052694){\makebox(0,0)[lt]{\lineheight{1.25}\smash{\begin{tabular}[t]{l}$\Sigma_2$\end{tabular}}}}%
    \put(0.39444028,0.67502616){\makebox(0,0)[lt]{\lineheight{1.25}\smash{\begin{tabular}[t]{l}$\Sigma_3$\end{tabular}}}}%
    \put(-0.00031505,0.18682198){\makebox(0,0)[lt]{\lineheight{1.25}\smash{\begin{tabular}[t]{l}$\Sigma_1$\end{tabular}}}}%
    \put(0,0){\includegraphics[width=\unitlength,page=2]{heeg_stab.pdf}}%
    \put(0.07183581,0.60754367){\makebox(0,0)[lt]{\lineheight{1.25}\smash{\begin{tabular}[t]{l}\textcolor{WongGr}{$Y_3'$}\end{tabular}}}}%
    \put(0.64979497,0.45061012){\makebox(0,0)[lt]{\lineheight{1.25}\smash{\begin{tabular}[t]{l}\textcolor{WongBl}{$Y_2'$}\end{tabular}}}}%
    \put(0,0){\includegraphics[width=\unitlength,page=3]{heeg_stab.pdf}}%
  \end{picture}%
\endgroup%

	\caption{A Heegaard stabilisation of a trisection of a 3-manifold.}
	\label{heeg_stab_figure}
\end{figure}

\begin{prop}\label{3stabunique} Any two trisections of a closed orientable 3-manifold are equivalent up to stabilisation and at least one Heegaard stabilisation.
\end{prop}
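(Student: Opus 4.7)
The plan is to reduce the proposition to Koenig's Theorem~\ref{dim3stab}. That result already establishes stable equivalence of any two trisections of a closed orientable $3$-manifold using internal stabilisations alone, subject to the single exception that trivial trisections of $S^3$ must be excluded. Heegaard stabilisation will serve as the tool to bridge precisely this missing case.

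Given trisections $\mathcal{T}$ and $\mathcal{T}'$ of $Y$, I would split into cases. If $Y \ne S^3$, or if $Y = S^3$ and neither $\mathcal{T}$ nor $\mathcal{T}'$ is trivial, then Theorem~\ref{dim3stab} provides a common internal stabilisation directly. If both are trivial trisections of $S^3$, they are already isotopic. The only substantive case is $Y = S^3$ with (after relabelling) $\mathcal{T}$ trivial and $\mathcal{T}'$ arbitrary. Here my plan is to apply a single Heegaard stabilisation to $\mathcal{T}$, producing a trisection $\mathcal{T}''$ of $S^3$ which is not trivial, and then to invoke Theorem~\ref{dim3stab} on the pair $(\mathcal{T}'', \mathcal{T}')$ to complete the argument.

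Two verifications are needed to execute this plan. First, the Heegaard stabilisation is well-defined on $\mathcal{T}$: each sector of the trivial trisection is a $3$-ball, which certainly contains boundary-parallel arcs with both endpoints on $\Sigma_i$, so the arc required by the definition exists. Second, the resulting $\mathcal{T}''$ is not trivial: since Heegaard stabilisation increases $y_i$ and $y_{i-1}$ by $1$ while leaving $y_{i+1}$ and $b$ fixed, starting from $\boldsymbol{y}(\mathcal{T}) = (0,0,0)$ yields a $\boldsymbol{y}(\mathcal{T}'')$ with two nonzero entries, so not all handlebodies are balls. Proposition~\ref{3mantriprop} then records the corresponding surface genera $p_i$ and confirms $\mathcal{T}''$ is a legitimate trisection of $S^3$ with one non-disk $\Sigma_j$.

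The main obstacle is confirming that the three sectors produced by Heegaard stabilisation on the trivial trisection really are handlebodies. For $Y_{i-1}' = Y_{i-1} \cup \overline{N(\alpha)}$, the boundary-parallel hypothesis on $\alpha$ ensures that $\overline{N(\alpha)}$ attaches to $Y_{i-1}$ along two disks on $\Sigma_i \subset \partial Y_{i-1}$, i.e.\ as a $1$-handle, producing a handlebody of genus $y_{i-1} + 1$. Dually, drilling a boundary-parallel arc neighbourhood from $Y_i$ yields a handlebody of genus $y_i + 1$, exactly as occurs in the familiar Heegaard stabilisation of a Heegaard splitting of $S^3$ (which takes genus $0$ to genus $1$). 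Once these local handlebody computations are verified, the reduction to Theorem~\ref{dim3stab} is immediate and the proposition follows.
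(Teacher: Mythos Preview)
Your proposal is correct and follows essentially the same approach as the paper: Heegaard stabilise to escape the trivial trisection of $S^3$, then invoke Theorem~\ref{dim3stab}. The paper's proof is a two-sentence version of yours, simply noting that after at most one Heegaard stabilisation per side the trisections are non-trivial and Koenig's theorem applies; your additional case analysis and explicit verification that the stabilised sectors remain handlebodies are sound but not strictly needed, since these facts are already built into the definition of Heegaard stabilisation.
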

\begin{proof}If a trisection is trivial, Heegaard stabilising once produces a non-trivial trisection. This means that applying at most one Heegaard stabilisation, the trisections can be assumed to be non-trivial. The trisections are now equivalent up to stabilisation, by Theorem \ref{dim3stab}.
\end{proof}
\subsection{Triple Heegaard diagrams}
In this subsection we introduce \emph{triple Heegaard diagrams}, which are diagrams for 3-manifold trisections adapted from Heegaard diagrams. Given a $(\boldsymbol{y}, b)$-trisection of a 3-manifold, a triple Heegaard diagram will consist of $|\boldsymbol y|$ closed curves on surfaces, with each set of $y_i$ curves determining a handlebody. This is analagous to Heegaard diagrams requiring $g+g$ closed curves, with each set of $g$ curves determining a handlebody.

\begin{defn}A \emph{cut system} of curves on a closed surface $\Sigma$ of genus $g$ is a collection of $g$ disjoint simple closed curves that cut $\Sigma$ into a sphere with $2g$ boundary components.
\end{defn}

\begin{defn}A \emph{$(\boldsymbol{y},b)$-triple Heegaard diagram} consists of the data $(\Sigma_1,\Sigma_2,\Sigma_3, \boldsymbol\delta_{1}, \boldsymbol\delta_2,\boldsymbol\delta_3)$ such that:
	\begin{enumerate}
		\item The $\Sigma_i$ are surfaces with genus $p_i = \frac{1}{2}(y_{i-1} + y_i - y_{i+1} - b + 1)$ and $b$ boundary components.
		\item There is an identification of the boundaries of all three of the $\Sigma_i$. In particular, $\Sigma_i \cup \Sigma_{i+1}$ is a closed surface of genus $y_i$.
		\item Each $\boldsymbol\delta_{i}$ is a collection of disjoint neatly embedded arcs and simple closed curves on $\Sigma_i$ and $\Sigma_{i+1}$, which glues to a cut system of $y_i$ curves on $\Sigma_i \cup \Sigma_{i+1}$.
	\end{enumerate}
	In other words, a triple Heegaard diagram consists of three surfaces with boundary, and curves and arcs of two colours on each surface. For notational brevity, we frequently write $(\Sigma_i, \boldsymbol\delta_i)$ instead of $(\Sigma_1, \Sigma_2, \Sigma_3, \boldsymbol\delta_1,\boldsymbol\delta_2,\boldsymbol\delta_3)$. 
\end{defn}

\begin{prop}\label{3mantriuptodiffeo}Triple Heegaard diagrams determine trisected 3-manifolds up to diffeomorphism. In particular, a triple Heegaard diagram whose surfaces have genera $p_1, p_2, p_3$ and $b$ boundary components determines a $(\boldsymbol{y}, b)$-trisection with $y_i = p_i + p_{i+1} + b -1$.
\end{prop}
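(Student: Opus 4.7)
The plan is to invert the construction of triple Heegaard diagrams from trisections: use each cut system to build the corresponding handlebody, then glue the three handlebodies along the prescribed identifications of the $\Sigma_i$. First I would verify the genus formula. The surface $\Sigma_i \cup \Sigma_{i+1}$, obtained by gluing two surfaces of genera $p_i$ and $p_{i+1}$ along $b$ boundary circles, has Euler characteristic $(2 - 2p_i - b) + (2 - 2p_{i+1} - b) = 4 - 2p_i - 2p_{i+1} - 2b$, hence closed genus $y_i = p_i + p_{i+1} + b - 1$ as claimed. By hypothesis, $\boldsymbol{\delta}_i$ is a cut system on this closed genus-$y_i$ surface.

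Next I would invoke the classical fact that a cut system on a closed orientable surface $F$ determines a handlebody filling of $F$, unique up to diffeomorphism fixing $F$ pointwise. Explicitly, attach 3-dimensional 2-handles along $\boldsymbol{\delta}_i$ to $(\Sigma_i \cup \Sigma_{i+1}) \times [0, 1/2]$ and cap off the remaining $S^2$ boundary with a 3-ball. This produces a genus-$y_i$ handlebody $Y_i$ whose boundary is (canonically identified with) $\Sigma_i \cup \Sigma_{i+1}$ and in which $\boldsymbol{\delta}_i$ bounds a complete system of meridian disks.

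Then I would glue the three handlebodies along the identifications provided by the diagram: each $\Sigma_i$ is a common subsurface of $\partial Y_{i-1}$ and $\partial Y_i$, and all three boundaries $\partial \Sigma_i$ coincide with the common link $B$. The resulting closed 3-manifold $Y = Y_1 \cup Y_2 \cup Y_3$ then carries a $(\boldsymbol{y}, b)$-trisection by construction, with triple intersection $B$ and pairwise intersections $\Sigma_i$, and the $y_i$ have the claimed values.

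The main obstacle is well-definedness up to diffeomorphism. Here I would argue that because each $Y_i$ is unique up to diffeomorphism relative to its boundary (via the Alexander trick applied to the capping 3-ball, which extends any boundary-preserving self-homeomorphism of $S^2$ across $B^3$), any two closed 3-manifolds produced from the same triple Heegaard diagram are obtained by gluing rel-boundary diffeomorphic handlebodies along matching interfaces. These rel-boundary diffeomorphisms restrict to the identity on the $\Sigma_i$ and so assemble into a global diffeomorphism between the two realisations that preserves the trisection structure. The subtlety to handle is confirming that the Alexander-trick diffeomorphisms can indeed be taken to be the identity on the entire boundary (and not merely isotopic to it), so that they patch together across the gluing loci $\Sigma_i$ without introducing any twisting of the binding $B$.
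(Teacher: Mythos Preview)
Your approach is essentially the same as the paper's: build each handlebody $Y_i$ from the cut system $\boldsymbol{\delta}_i$ by attaching disks and capping with a 3-ball, glue along the $\Sigma_i$, and argue uniqueness by extending the identity on the 2-skeleton across the disks and 3-balls. The paper phrases the uniqueness step as: start with the identity on $\cup_i \Sigma_i$, extend over the glued-in disks using contractibility of $\mathrm{Diff}(B^2\ \mathrm{rel}\ \partial)$ (Smale), then extend over the capping 3-balls using contractibility of $\mathrm{Diff}(B^3\ \mathrm{rel}\ \partial)$ (Hatcher).

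One point to sharpen: the ``Alexander trick'' is a topological/PL statement (coning a sphere homeomorphism), and the coned extension is not smooth at the center. In the smooth category you need Smale's theorem that $\mathrm{Diff}(S^2)\simeq O(3)$ (equivalently $\mathrm{Diff}(D^2\ \mathrm{rel}\ \partial)$ is contractible), which then lets you extend any diffeomorphism of $S^2$ smoothly over $B^3$; the paper goes further and cites Hatcher. Also, you only invoked the 3-ball step, but the disk-attaching step needs the same kind of justification (this is where the paper's $n=2$ case enters). Finally, the ``subtlety'' you flagged at the end is not quite the right one: by construction the extension is literally the identity on the boundary surface, so there is no ``isotopic versus equal'' issue to resolve. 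The actual content is the existence of a \emph{smooth} rel-boundary extension over each $B^2$ and $B^3$, which is exactly what Smale and Hatcher provide.
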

\begin{proof}For each $i$, the cut system on $\Sigma_i \cup \Sigma_{i+1}$ are instructions for gluing $p_i + p_{i+1} + b - 1$ disks into $\Sigma_i \cup \Sigma_{i+1}$. A 3-ball now glues into the remaining cavity to produce a handlebody $Y_i$ of genus $y_i = p_i + p_{i+1} + b - 1$. The handlebodies $Y_i$ and $Y_{i+1}$ glue along $\Sigma_{i+1}$. Gluing all three handlebodies together in this way produces a trisected closed 3-manifold. To see that the resulting trisected 3-manifold is unique up to diffeomorphism, we use that $\text{Diff}(B^n \text{ rel } \partial)$ is contractible for $n=2$ and $n=3$ \cite{Smale, Hatcher}. Suppose $Y$ and $Y'$ are built from a given triple Heegaard diagram as above. We will incrementally build a diffeomorphism $Y \to Y'$ preserving the trisection structure. First, consider the identity map on $\cup_i \Sigma_i$. The first steps in building $Y$ and $Y'$ was to glue disks along curves in the cut system. Since $\text{Diff}(B^2 \text{ rel } \partial)$ is contractible, the aforementioned map extends to $\cup_i \Sigma_i \cup \text{\{disks\}}$ independent of how these disks are glued in. The final step is to glue in three 3-balls, and again, the map extends, since $\text{Diff}(B^3 \text{ rel } \partial)$ is contractible. This guarantees that we can build a diffeormophism $Y \to Y'$, and moreover that the diffeomorphism preserves the trisection structure. That is, $Y$ and $Y'$ are diffeomorphic as trisected 3-manifolds.  
\end{proof}

\begin{eg}Figure \ref{diag_eg_figure} is an example of a triple Heegaard diagram of $T^3$. By noting identifications of some boundary components in $\Sigma_1$ and $\Sigma_2$, we see that all three of $\Sigma_1, \Sigma_2$ and $\Sigma_3$ share the same $S^1$ boundary. Inspecting the pairs $(\Sigma_i \cup \Sigma_{i+1}, \boldsymbol{\delta}_i)$, one can verify that these are cut systems on closed surfaces. 

	We observe that this is a diagram of $T^3$ by passing through the standard genus-3 Heegaard splitting of $T^3$. The standard Heegaard splitting of $T^3$ has diagram $(\Sigma_1 \cup \Sigma_2, \boldsymbol{\delta}_1, \boldsymbol{\delta}_2 \cup \boldsymbol{\delta}_3)$. To verify that the triple Heegaard diagram of Figure \ref{diag_eg_figure} builds the same 3-manifold, it suffices to observe that the cut systems $(\Sigma_3 \cup \Sigma_1, \boldsymbol{\delta}_3)$ and $(\Sigma_2 \cup \Sigma_3, \boldsymbol{\delta}_2)$ together determine the same handlebody as $(\Sigma_1 \cup \Sigma_2, \boldsymbol{\delta}_2 \cup \boldsymbol{\delta}_3)$. Indeed, the two former cut systems determine handlebodies with a common disk boundary $\Sigma_3$, and gluing the handlebodies along $\Sigma_3$ results in the handlebody determined by the latter cut system.
\end{eg}

\begin{figure}
	\noindent\hspace{45px}
\begingroup%
  \makeatletter%
  \providecommand\color[2][]{%
    \errmessage{(Inkscape) Color is used for the text in Inkscape, but the package 'color.sty' is not loaded}%
    \renewcommand\color[2][]{}%
  }%
  \providecommand\transparent[1]{%
    \errmessage{(Inkscape) Transparency is used (non-zero) for the text in Inkscape, but the package 'transparent.sty' is not loaded}%
    \renewcommand\transparent[1]{}%
  }%
  \providecommand\rotatebox[2]{#2}%
  \newcommand*\fsize{\dimexpr\f@size pt\relax}%
  \newcommand*\lineheight[1]{\fontsize{\fsize}{#1\fsize}\selectfont}%
  \ifx\svgwidth\undefined%
    \setlength{\unitlength}{559.92883358bp}%
    \ifx\svgscale\undefined%
      \relax%
    \else%
      \setlength{\unitlength}{\unitlength * \real{\svgscale}}%
    \fi%
  \else%
    \setlength{\unitlength}{\svgwidth}%
  \fi%
  \global\let\svgwidth\undefined%
  \global\let\svgscale\undefined%
  \makeatother%
  \begin{picture}(1,0.49494764)%
    \lineheight{1}%
    \setlength\tabcolsep{0pt}%
    \put(0,0){\includegraphics[width=\unitlength,page=1]{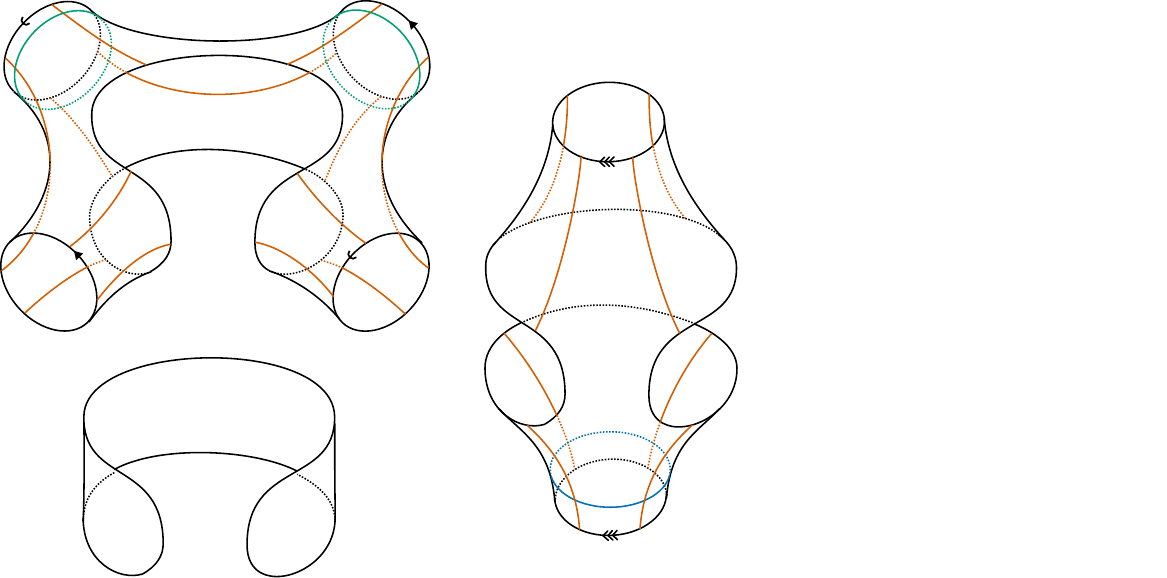}}%
    \put(0.1737792,0.47350043){\makebox(0,0)[lt]{\lineheight{1.25}\smash{\begin{tabular}[t]{l}$\Sigma_1$\end{tabular}}}}%
    \put(0.51424046,0.43674251){\makebox(0,0)[lt]{\lineheight{1.25}\smash{\begin{tabular}[t]{l}$\Sigma_2$\end{tabular}}}}%
    \put(0.16744612,0.20139158){\makebox(0,0)[lt]{\lineheight{1.25}\smash{\begin{tabular}[t]{l}$\Sigma_3$\end{tabular}}}}%
    \put(0.11324795,0.4020931){\makebox(0,0)[lt]{\lineheight{1.25}\smash{\begin{tabular}[t]{l}$\textcolor{WongRe}{\boldsymbol{\delta}_1}$\end{tabular}}}}%
    \put(0.35305099,0.38736531){\makebox(0,0)[lt]{\lineheight{1.25}\smash{\begin{tabular}[t]{l}$\textcolor{WongGr}{\boldsymbol{\delta}_3}$\end{tabular}}}}%
    \put(0.58133664,0.07873021){\makebox(0,0)[lt]{\lineheight{1.25}\smash{\begin{tabular}[t]{l}$\textcolor{WongBl}{\boldsymbol{\delta}_2}$\end{tabular}}}}%
    \put(0.56613575,0.265148){\makebox(0,0)[lt]{\lineheight{1.25}\smash{\begin{tabular}[t]{l}$\textcolor{WongRe}{\boldsymbol{\delta}_1}$\end{tabular}}}}%
  \end{picture}%
\endgroup%

	\caption{A triple Heegaard diagram of $T^3$.}
	\label{diag_eg_figure}
\end{figure}

\begin{remark}The data of a triple Heegaard diagram doesn't explictly describe how the binding $B$ is knotted---that is to say, the boundary $B = \partial \Sigma_i$ can be depicted in a diagram as an unlink. However, Proposition \ref{3mantriuptodiffeo} shows that the isotopy class of the link $B$ is nevertheless determined by triple Heegaard diagrams.
\end{remark}
\begin{prop}Every 3-manifold is described by a triple Heegaard diagram.
\end{prop}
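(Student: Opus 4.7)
The plan is to leverage Example \ref{3admits}, which guarantees that every 3-manifold $Y$ admits some trisection $(Y_1, Y_2, Y_3)$ with surfaces $\Sigma_i = Y_{i-1} \cap Y_i$ and binding $B$. It then suffices to extract a compatible collection of curves and arcs $\boldsymbol\delta_1, \boldsymbol\delta_2, \boldsymbol\delta_3$ from this trisection.

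The key observation is that, by Proposition \ref{3mantriprop}, each handlebody $Y_i$ has genus $y_i$ and boundary $\partial Y_i = \Sigma_i \cup \Sigma_{i+1}$, a closed surface of genus $y_i$. So on each $\partial Y_i$ there exists a cut system of $y_i$ disjoint simple closed curves bounding a complete system of compressing disks in $Y_i$. The plan is to choose such a cut system for each $i$ and then split it along the binding to obtain $\boldsymbol\delta_i$.

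In detail, for each $i$ I would pick the cut system on $\partial Y_i$ first, then perform a small isotopy on $\partial Y_i$ (fixing nothing about the splitting between $\Sigma_i$ and $\Sigma_{i+1}$) to make each curve in the cut system transverse to the binding $B = \partial \Sigma_i = \partial \Sigma_{i+1}$. The intersections of these curves with $\Sigma_i$ and $\Sigma_{i+1}$ are then neatly embedded arcs and simple closed curves, and their union on $\Sigma_i \cup \Sigma_{i+1}$ is by construction a cut system of $y_i$ curves. Setting $\boldsymbol\delta_i$ equal to this collection of arcs and closed curves yields the required data. The surfaces $\Sigma_i$ have the prescribed genera $p_i$ and share the common boundary $B$, so the three conditions in the definition of a $(\boldsymbol{y}, b)$-triple Heegaard diagram are satisfied.

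There is no substantial obstacle: the only subtlety is the transversality step, which is routine since $B$ is a 1-submanifold of the surface $\partial Y_i$ and the cut system consists of 1-submanifolds, so a generic small isotopy achieves transversality. Moreover, by Proposition \ref{3mantriuptodiffeo}, the resulting triple Heegaard diagram recovers the original trisection up to diffeomorphism, so the construction is consistent.
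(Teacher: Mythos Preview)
Your proposal is correct and follows essentially the same approach as the paper: start from a trisection (Example \ref{3admits}), then for each handlebody sector $Y_i$ choose a cut system on its boundary $\Sigma_i \cup \Sigma_{i+1}$, and record the resulting arcs and curves as $\boldsymbol\delta_i$. The paper's proof is terser and does not spell out the transversality-to-$B$ step you include, but that is a routine detail rather than a different idea.
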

\begin{proof}Every 3-manifold admits a trisection, as in Example \ref{3admits}. Moreover, every trisection is induced by a triple Heegaard diagram. This is because every sector of the trisection is a handlebody, and every handlebody is determined by a cut system on the boundary surface. A collection of three cut systems for each sector is precisely the data of a triple Heegaard diagram. 
\end{proof}

\begin{eg}\label{stabilisationdiagram} Figure \ref{stab_diag_figure} depicts stabilisation along a non-separating arc in $\Sigma_1$. Stabilising a diagram has three steps:
	\begin{enumerate}
		\item Identify the non-separating arc $\alpha$ in some $\Sigma_i$ along which the stabilisation occurs. Thicken the arc to a band in $\Sigma_i$. (The thickening is unique in the sense that the framing is determined by $\Sigma_i$.)
		\item The new surfaces $\Sigma_i', \Sigma_{i+1}', \Sigma_{i+2}'$ are obtained by subtracting the band in $\Sigma_i'$, and attaching the band to both $\Sigma_{i+1}'$ and $\Sigma_{i+2}'$. 
		\item The curves on $\Sigma_i$ intersecting $\alpha$ are cut and pasted in the new surfaces, as shown by the change from $\delta$ to $\delta'$ in Figure \ref{stab_diag_figure}. A single new curve is added to $\boldsymbol{\delta}_{i+1}$, consisting of an arc in each band in $\Sigma_{i+1}'$ and $\Sigma_{i+2}'$. This is $\gamma$ in Figure \ref{stab_diag_figure}, and corresponds to a meridian of the handle glued to $Y_{i+1}$ in the stabilisation.   
	\end{enumerate}
\end{eg}

\begin{figure}
	\noindent\hspace{40px}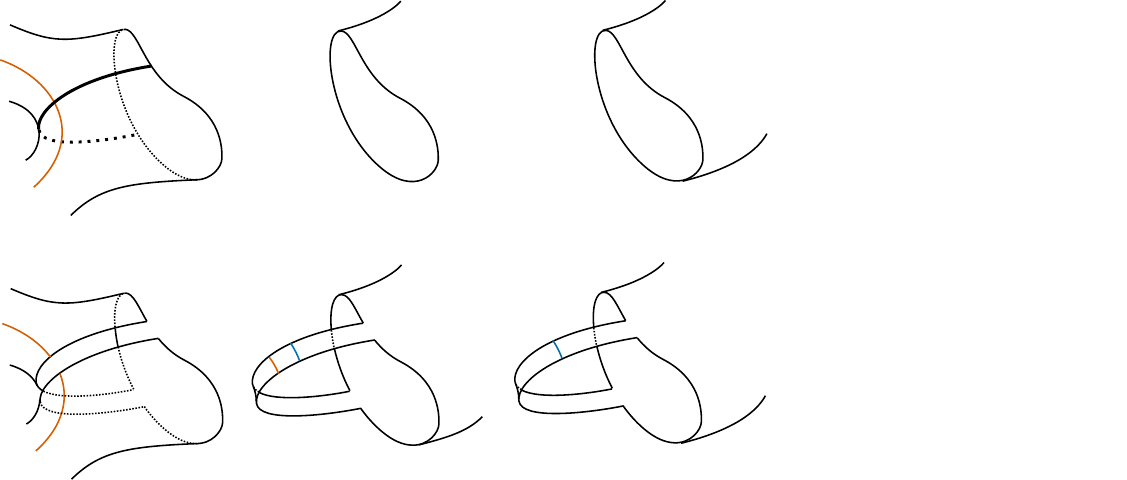
	\caption{Stabilisation of triple Heegaard diagrams.}
	\label{stab_diag_figure}
\end{figure}

\begin{eg}\label{heegaardstabilisationdiagram} Figure \ref{heeg_stab_diag_figure} depicts Heegaard stabilisation along an arc in $Y_1$ with endpoints on $\Sigma_1$. It is evidently diagrammatically similar to stabilisation of Heegaard splittings. In this example, since the arc has endpoints on $\Sigma_1$, the sector $Y_2$ is unchanged while $Y_1$ and $Y_3$ increase in genus. Therefore the two new curves $\eta$ and $\gamma$ in Figure \ref{heeg_stab_diag_figure} belong to $\boldsymbol{\delta}_1$ and $\boldsymbol{\delta}_3$ respectively. 
\end{eg}

\begin{figure}
	\noindent\hspace{45px}
\begingroup%
  \makeatletter%
  \providecommand\color[2][]{%
    \errmessage{(Inkscape) Color is used for the text in Inkscape, but the package 'color.sty' is not loaded}%
    \renewcommand\color[2][]{}%
  }%
  \providecommand\transparent[1]{%
    \errmessage{(Inkscape) Transparency is used (non-zero) for the text in Inkscape, but the package 'transparent.sty' is not loaded}%
    \renewcommand\transparent[1]{}%
  }%
  \providecommand\rotatebox[2]{#2}%
  \newcommand*\fsize{\dimexpr\f@size pt\relax}%
  \newcommand*\lineheight[1]{\fontsize{\fsize}{#1\fsize}\selectfont}%
  \ifx\svgwidth\undefined%
    \setlength{\unitlength}{608.4965846bp}%
    \ifx\svgscale\undefined%
      \relax%
    \else%
      \setlength{\unitlength}{\unitlength * \real{\svgscale}}%
    \fi%
  \else%
    \setlength{\unitlength}{\svgwidth}%
  \fi%
  \global\let\svgwidth\undefined%
  \global\let\svgscale\undefined%
  \makeatother%
  \begin{picture}(1,0.16755559)%
    \lineheight{1}%
    \setlength\tabcolsep{0pt}%
    \put(0,0){\includegraphics[width=\unitlength,page=1]{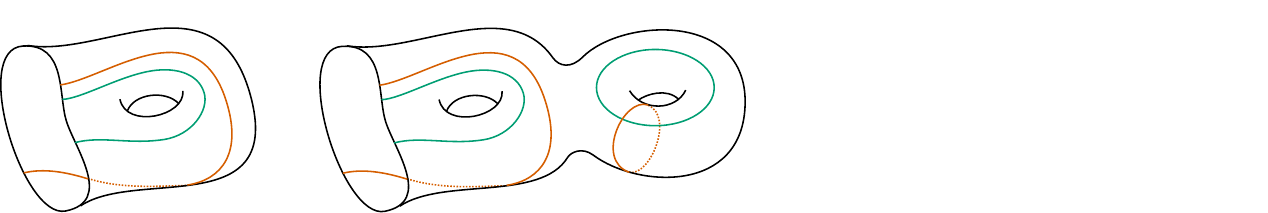}}%
    \put(0.081396,0.15095367){\makebox(0,0)[lt]{\lineheight{1.25}\smash{\begin{tabular}[t]{l}$\Sigma_1$\end{tabular}}}}%
    \put(0.35551235,0.15295764){\makebox(0,0)[lt]{\lineheight{1.25}\smash{\begin{tabular}[t]{l}$\Sigma_1'$\end{tabular}}}}%
    \put(0.50735756,0.15211737){\makebox(0,0)[lt]{\lineheight{1.25}\smash{\begin{tabular}[t]{l}$\textcolor{WongGr}{\gamma \in \boldsymbol{\delta}_3}$\end{tabular}}}}%
    \put(0.47818167,0.01010871){\makebox(0,0)[lt]{\lineheight{1.25}\smash{\begin{tabular}[t]{l}$\textcolor{WongRe}{\eta \in \boldsymbol{\delta}_1}$\end{tabular}}}}%
  \end{picture}%
\endgroup%

	\caption{Heegaard stabilisation of triple Heegaard diagrams.}
	\label{heeg_stab_diag_figure}
\end{figure}

\begin{eg}\label{handleslide_eg} Handleslides and isotopies of triple Heegaard diagrams are the same as those for Heegaard diagrams. More explicitly, given a Heegaard diagram $(\Sigma, \boldsymbol{\alpha}, \boldsymbol{\beta})$, handleslide and isotopy moves are defined one handlebody at a time, i.e. on $(\Sigma, \boldsymbol{\alpha})$ and $(\Sigma, \boldsymbol{\beta})$. In the case of a triple Heegaard diagram $(\Sigma_i, \boldsymbol{\delta}_i)$, handleslides and isotopies are defined on the pairs $(\Sigma_1 \cup \Sigma_2, \boldsymbol{\delta}_2), (\Sigma_2 \cup \Sigma_3, \boldsymbol{\delta}_3)$, and $(\Sigma_3 \cup \Sigma_1, \boldsymbol{\delta}_1)$.

	Note that isotopies and handleslides of families of curves on $\Sigma_i \cup \Sigma_{i+1}$ can change how they intersect their common boundary, as shown in Figure \ref{handleslide_figure}.
\end{eg}

\begin{figure}
	\noindent\hspace{50px}
\begingroup%
  \makeatletter%
  \providecommand\color[2][]{%
    \errmessage{(Inkscape) Color is used for the text in Inkscape, but the package 'color.sty' is not loaded}%
    \renewcommand\color[2][]{}%
  }%
  \providecommand\transparent[1]{%
    \errmessage{(Inkscape) Transparency is used (non-zero) for the text in Inkscape, but the package 'transparent.sty' is not loaded}%
    \renewcommand\transparent[1]{}%
  }%
  \providecommand\rotatebox[2]{#2}%
  \newcommand*\fsize{\dimexpr\f@size pt\relax}%
  \newcommand*\lineheight[1]{\fontsize{\fsize}{#1\fsize}\selectfont}%
  \ifx\svgwidth\undefined%
    \setlength{\unitlength}{510.7958048bp}%
    \ifx\svgscale\undefined%
      \relax%
    \else%
      \setlength{\unitlength}{\unitlength * \real{\svgscale}}%
    \fi%
  \else%
    \setlength{\unitlength}{\svgwidth}%
  \fi%
  \global\let\svgwidth\undefined%
  \global\let\svgscale\undefined%
  \makeatother%
  \begin{picture}(1,0.32478779)%
    \lineheight{1}%
    \setlength\tabcolsep{0pt}%
    \put(0,0){\includegraphics[width=\unitlength,page=1]{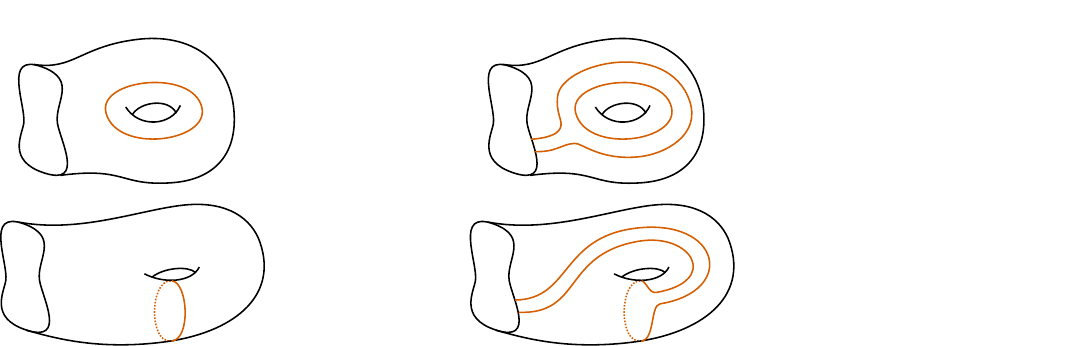}}%
    \put(0.29245413,0.15955466){\makebox(0,0)[lt]{\lineheight{1.25}\smash{\begin{tabular}[t]{l}Handleslide\end{tabular}}}}%
    \put(0,0){\includegraphics[width=\unitlength,page=2]{slide.pdf}}%
    \put(0.0626853,0.30739766){\makebox(0,0)[lt]{\lineheight{1.25}\smash{\begin{tabular}[t]{l}$(\Sigma_1, \Sigma_2, \boldsymbol{\delta}_1)$\end{tabular}}}}%
    \put(0.50486308,0.30733511){\makebox(0,0)[lt]{\lineheight{1.25}\smash{\begin{tabular}[t]{l}$(\Sigma_1, \Sigma_2, \boldsymbol{\delta}_1')$\end{tabular}}}}%
  \end{picture}%
\endgroup%

	\caption{A handleslide in the context of triple Heegaard diagrams.}
	\label{handleslide_figure}
\end{figure}

\begin{lem}\label{diagramuniqueness_lem}Fix a trisection $(Y_1, Y_2, Y_3)$ of a 3-manifold $Y$. Any two triple Heegaard diagrams $(\Sigma_i, \boldsymbol{\delta}_i), (\Sigma_i', \boldsymbol{\delta}_i')$ of $(Y_1, Y_2, Y_3)$ are diffeomorphic after a sequence of handleslides and isotopies. (That is, there are diffeomorphisms $\varphi_i : \Sigma_i \to \Sigma_i'$ rel boundary sending $\boldsymbol{\delta}_i \cap \Sigma_i$ and $\boldsymbol{\delta}_{i-1} \cap \Sigma_i$ to $\boldsymbol{\delta}_i' \cap \Sigma_i$ and $\boldsymbol{\delta}_{i-1}' \cap \Sigma_i$ respectively.) 
\end{lem}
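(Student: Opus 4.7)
The plan is to reduce the lemma to the classical uniqueness of cut systems for a handlebody, applied separately to each of $Y_1, Y_2, Y_3$. The input I need is the following standard fact: if $H$ is a handlebody and $\boldsymbol\alpha, \boldsymbol\alpha'$ are two cut systems on $\partial H$ each of which cuts $H$ into a single $3$-ball, then $\boldsymbol\alpha$ and $\boldsymbol\alpha'$ are related by a finite sequence of handleslides and ambient isotopies of $\partial H$. This can be extracted from Johansson's work on cut systems, or equivalently from the Cerf-theoretic approach underlying the Reidemeister--Singer theorem (interpreting cut systems as the descending disks of a self-indexing Morse function on $H$ with a single minimum).

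First I would invoke Proposition \ref{3mantriuptodiffeo}: both diagrams realize the trisected manifold $(Y_1,Y_2,Y_3)$, and the realization map supplies a trisection-preserving diffeomorphism of $Y$ to itself, which restricts to an initial triple of diffeomorphisms $\psi_i : \Sigma_i \to \Sigma_i'$ rel boundary. Replacing the second diagram by its pullback along the $\psi_i$, I may assume $\Sigma_i = \Sigma_i'$ as surfaces with boundary, with the Heegaard surfaces $\Sigma_i \cup \Sigma_{i+1}$ and binding $B$ all literally equal, so that only the cut systems $\boldsymbol\delta_i$ and $\boldsymbol\delta_i'$ may differ.

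Next, for each $i \in \Z/3\Z$, both $\boldsymbol\delta_i$ and $\boldsymbol\delta_i'$ are cut systems on the closed Heegaard surface $\Sigma_i \cup \Sigma_{i+1}$ that cut the handlebody $Y_i$ into a ball. Applying the classical cut system theorem to $Y_i$, I obtain a finite sequence of handleslides and ambient isotopies on $\Sigma_i \cup \Sigma_{i+1}$ taking $\boldsymbol\delta_i$ to $\boldsymbol\delta_i'$. By Example \ref{handleslide_eg} these are precisely handleslide and isotopy moves of the triple Heegaard diagram, with the understanding that ambient isotopies may push curves across $B$, changing how the arcs split between $\Sigma_i$ and $\Sigma_{i+1}$.

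The main obstacle is coordinating the three separate sequences: each surface $\Sigma_i$ lies in both $\Sigma_{i-1}\cup \Sigma_i$ and $\Sigma_i \cup \Sigma_{i+1}$, so I must verify that modifying $\boldsymbol\delta_i$ on the latter surface does not disturb $\boldsymbol\delta_{i-1}$ on the former. Since handleslides for a given cut system only permute arcs of that same cut system, and since ambient isotopies can be made supported in small tubular neighborhoods of the moving curves via general position (avoiding the curves of the other two cut systems), the three sequences can be carried out independently and then combined. Assembling the resulting modifications yields the desired diffeomorphisms $\varphi_i : \Sigma_i \to \Sigma_i'$ sending $\boldsymbol\delta_i \cap \Sigma_i$ and $\boldsymbol\delta_{i-1} \cap \Sigma_i$ to their primed counterparts.
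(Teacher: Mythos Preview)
Your approach is essentially the same as the paper's: both reduce to Johannson's result that meridian-systems (cut systems) of a handlebody are unique up to handleslides and isotopy, applied separately to each $Y_i$, with a diffeomorphism of the trisection absorbing any remaining difference. Your discussion of coordinating the three sequences is more careful than the paper's, which simply notes that diffeomorphisms of the trisection descend to diffeomorphisms of diagrams without explicitly addressing the independence of the three families of moves.
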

\begin{proof}Johannson \cite{Joh} defines a \emph{meridian-system} to be a pair $(M,\mathcal D)$ where $M$ is a 3-manifold with boundary and $\mathcal D$ is a collection of neatly embedded disks such that $M - D$ is homeomorphic to a 3-ball. Corollary 1.6 of \cite{Joh} shows that meridian-systems of handlebodies (which are considered up to isotopy) are unique up to handleslides. Given a sector $Y_i$ of the trisection, $(\Sigma_{i}, \Sigma_{i+1}, \boldsymbol{\delta}_i)$ induces a unique meridian-system. It follows that, up to some diffeomorphism of $(Y_1, Y_2, Y_3)$, the triple Heegaard diagram is unique up to isotopy and handleslides. Finally, diffeomorphisms of the trisection descend to diffeomorphisms of triple Heegaard diagrams. 
\end{proof}

\begin{prop}\label{diagramuniqueness}Any two triple Heegaard diagrams of a given 3-manifold are diffeomorphic after a sequence of stabilisations, Heegaard stabilisations, handleslides, and isotopies.
\end{prop}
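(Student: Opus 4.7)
The plan is to reduce uniqueness of diagrams to uniqueness of trisections (Proposition~\ref{3stabunique}) by checking that every move on trisections has a well-defined diagrammatic counterpart, and then to invoke Lemma~\ref{diagramuniqueness_lem} to handle what remains once the underlying trisections agree.

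More precisely, let $D$ and $D'$ be two triple Heegaard diagrams of $Y$. By Proposition~\ref{3mantriuptodiffeo}, they realise trisections $\mathcal T$ and $\mathcal T'$ of $Y$. By Proposition~\ref{3stabunique}, there is a finite sequence of stabilisations and at least one Heegaard stabilisation that takes $\mathcal T$ and $\mathcal T'$ to a common trisection $\mathcal T''$ (up to isotopy). The key claim to verify is that each of these trisection moves can be performed at the level of diagrams: a stabilisation of $\mathcal T$ along a non-separating arc $\alpha \subset \Sigma_i$ is realised on $D$ by the band/cut-and-paste procedure of Example~\ref{stabilisationdiagram}, and a Heegaard stabilisation along a boundary parallel arc $\alpha \subset Y_i$ is realised on $D$ by the procedure of Example~\ref{heegaardstabilisationdiagram}. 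In each case one checks (routinely, using the descriptions of $Y_i'$, $Y_{i\pm 1}'$ from Definitions~\ref{3manstab} and of the Heegaard stabilisation) that the resulting collections of curves continue to cut each $\Sigma_i' \cup \Sigma_{i+1}'$ into a sphere with boundary components, hence form a valid triple Heegaard diagram whose realisation is precisely the stabilised trisection.

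Applying the two sequences of diagrammatic stabilisations/Heegaard stabilisations to $D$ and to $D'$ yields triple Heegaard diagrams $\widetilde D$ and $\widetilde{D'}$ that both realise the common trisection $\mathcal T''$ (up to isotopy, which in turn corresponds to an ambient isotopy move on the diagrams). Lemma~\ref{diagramuniqueness_lem} then produces diffeomorphisms $\varphi_i : \widetilde\Sigma_i \to \widetilde\Sigma_i'$ relating $\widetilde D$ to $\widetilde{D'}$ after a finite sequence of handleslides and isotopies. Concatenating these moves gives the required sequence relating $D$ to $D'$.

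The main obstacle is the bookkeeping in verifying that the diagrammatic moves of Examples~\ref{stabilisationdiagram} and~\ref{heegaardstabilisationdiagram} faithfully encode the geometric stabilisations of Definitions~\ref{3manstab} and~\ref{heegaardstab}; in particular one must check that the single new curve introduced (the meridian $\gamma$ of the attached handle, and in the Heegaard case also the new curve $\eta$) correctly upgrades the old cut systems for the $(Y_i,\Sigma_i,\Sigma_{i+1})$ to cut systems for the new sectors $Y_i'$, and that the arcs split from curves crossing the band reassemble into a cut system in the modified surfaces. Once this is in hand, everything else is a formal concatenation of the three input results.
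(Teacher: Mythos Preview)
Your proposal is correct and follows essentially the same approach as the paper: reduce to a common trisection via Proposition~\ref{3stabunique}, lift the (Heegaard) stabilisations to diagrams via Examples~\ref{stabilisationdiagram} and~\ref{heegaardstabilisationdiagram}, and then apply Lemma~\ref{diagramuniqueness_lem}. The paper's proof is terser but structurally identical; your more careful accounting of the bookkeeping in the diagrammatic stabilisation moves is a welcome elaboration rather than a departure.
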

\begin{proof}By Lemma \ref{diagramuniqueness_lem}, any two triple Heegaard diagrams of a fixed trisection are diffeomorphic after a sequence of handleslides and isotopies. By Proposition \ref{3stabunique}, any two trisections of a given 3-manifold are equivalent up to stabilisation and Heegaard stabilisation. (Examples \ref{stabilisationdiagram} and \ref{heegaardstabilisationdiagram} show how stabilisation and Heegaard stabilisation descend to triple Heegaard diagrams.) The result follows by applying each of these results.
\end{proof}

\begin{remark}Proposition \ref{diagramuniqueness} is a trisection version of the \emph{Reidemeister-Singer theorem} \cite{Reidemeister, Singer} for Heegaard splittings, which states that Heegaard diagrams of a given 3-manifold are diffeomorphic after a sequence of stabilisations, handleslides, and isotopies. (Strictly speaking the \emph{Reidemeister-Singer theorem} refers to the result that Heegaard splittings are stably equivalent, but both original proofs were at the diagrammatic level.)
\end{remark}
\subsection{Complexity of 3-manifold trisections}
This subsection introduces a notion of complexity for 3-manifold trisections, which will be used later to study the complexity of \emph{pseudo-trisections}.

\begin{defn} The \emph{complexity} of a $(\boldsymbol{y}, b)$-trisection $\mathcal{T}$ of a 3-manifold $Y$ is the integer
	$c(\mathcal T) = |\boldsymbol y|$.
The minimum complexity among all trisections of $Y$ is denoted $c(Y)$.
\end{defn}
\begin{eg} The trivial trisection of $S^3$ has complexity $0$. It follows that $c(S^3) = 0$.
\end{eg}
\begin{prop}\label{3d_cplx_prop} Complexity of 3-manifold trisections satisfies the following properties:
	\begin{enumerate}
		\item Recalling that $p_i$ denotes the genus of the surface $\Sigma_i$ in the trisection $\mathcal T$,
			$$c(\mathcal T) = 2|\boldsymbol p| + 3b - 3.$$
		\item Stabilisation increases complexity by 1, Heegaard stabilisation increases complexity by 2.
		\item Writing $g_H(Y)$ to denote the minimum genus of a Heegaard splitting of $Y$,
			$$c(Y) \leq 2g_H(Y).$$
	\end{enumerate}
\end{prop}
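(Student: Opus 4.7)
The plan is to prove the three parts in sequence, each reducing to a short computation using facts already in the excerpt: the relation $y_i = p_i + p_{i+1} + b - 1$ from Proposition \ref{3mantriprop}, together with the effect-on-$\boldsymbol y$ observations recorded immediately after Definition \ref{3manstab} and the Heegaard stabilisation definition.

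For part (1), I would simply sum the three equations $y_i = p_i + p_{i+1} + b - 1$ over $i \in \Z/3\Z$. Each $p_j$ appears in exactly two of these sums, so
\[ |\boldsymbol y| = 2(p_1 + p_2 + p_3) + 3(b-1) = 2|\boldsymbol p| + 3b - 3. \]
For part (2), the remarks following the two stabilisation definitions already record what happens to $\boldsymbol y$: stabilisation increases exactly one of the $y_i$ by $1$, while Heegaard stabilisation increases exactly two of them by $1$. Since $c(\mathcal T) = |\boldsymbol y|$, the complexity grows by $1$ and by $2$ respectively.

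For part (3), I would feed a minimal Heegaard splitting through the construction of Example \ref{3admits}. Starting from a genus-$g$ Heegaard splitting $Y = H_1 \cup H_2$ with $g = g_H(Y)$, and choosing a contractible loop $\gamma$ on $\partial H_1$, one obtains the trisection $(H_1 \setminus B,\, B,\, H_2)$, where $B$ is a $3$-ball bounded by a disk $D \subset \partial H_1$ and a parallel interior disk $D'$. Since $B$ meets $\partial H_1$ in the disk $D$, the sector $H_1 \setminus B$ is diffeomorphic to $H_1$ and has genus $g$; meanwhile $B$ is a handlebody of genus $0$ and $H_2$ has genus $g$. Therefore $\boldsymbol y = (g, 0, g)$ and $c(\mathcal T) = 2g$, which gives the desired bound $c(Y) \leq 2g_H(Y)$.

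The main point requiring care is the identification of the genera in part (3): one must verify that removing the ball $B$ from $H_1$ does not change the genus of $H_1$, which follows because $B$ is a collar-type ball attached to $\partial H_1$ along a single disk. Consistency with part (1) is an additional sanity check: one computes $\boldsymbol p = (g, 0, 0)$ from $\boldsymbol y = (g,0,g)$ and $b = 1$, and $2|\boldsymbol p| + 3b - 3 = 2g$ as required.
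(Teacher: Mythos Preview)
Your proof is correct and follows essentially the same approach as the paper: part (1) via the relation $y_i = p_i + p_{i+1} + b - 1$ from Proposition \ref{3mantriprop}, part (2) via the recorded effects on $\boldsymbol y$, and part (3) via the Heegaard-splitting-to-trisection construction of Example \ref{3admits}. You supply more detail than the paper (which simply asserts that a genus $g$ splitting induces a $(g,g,0;1)$-trisection), and your labeling $\boldsymbol y = (g,0,g)$ differs from the paper's by a cyclic permutation, but the content is the same.
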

\begin{proof} Item (1) is immediate from Proposition \ref{3mantriprop}. Item (2) follows from earlier observations about how stabilisation and Heegaard stabilisation affect each $y_i$. Finally item (3) follows from the fact that a genus $g$ Heegaard splitting induces a $(g, g, 0; 1)$-trisection.
\end{proof}

\begin{eg}[Koenig \cite{Koe}, Example 8] The trisection of $(S^1 \times S^2) \# (S^1 \times S^2)$ depicted in Figure \ref{2S1xS2_figure} has complexity $3$. One can show that any trisection with complexity at most $2$ is necessarily induced by a Heegaard splitting of genus at most 1, and such 3-manifolds are lens spaces or $(S^1 \times S^2)$. It follows that $c((S^1 \times S^2) \# (S^1 \times S^2)) = 3$. Moreover, $g_H((S^1\times S^2) \# (S^1 \times S^2)) = 2$, so this example shows that the inequality $c(Y) \leq 2g_H(Y)$ can be strict. 
\end{eg}
\begin{figure}
	\noindent\hspace{0px}
\begingroup%
  \makeatletter%
  \providecommand\color[2][]{%
    \errmessage{(Inkscape) Color is used for the text in Inkscape, but the package 'color.sty' is not loaded}%
    \renewcommand\color[2][]{}%
  }%
  \providecommand\transparent[1]{%
    \errmessage{(Inkscape) Transparency is used (non-zero) for the text in Inkscape, but the package 'transparent.sty' is not loaded}%
    \renewcommand\transparent[1]{}%
  }%
  \providecommand\rotatebox[2]{#2}%
  \newcommand*\fsize{\dimexpr\f@size pt\relax}%
  \newcommand*\lineheight[1]{\fontsize{\fsize}{#1\fsize}\selectfont}%
  \ifx\svgwidth\undefined%
    \setlength{\unitlength}{377.78225055bp}%
    \ifx\svgscale\undefined%
      \relax%
    \else%
      \setlength{\unitlength}{\unitlength * \real{\svgscale}}%
    \fi%
  \else%
    \setlength{\unitlength}{\svgwidth}%
  \fi%
  \global\let\svgwidth\undefined%
  \global\let\svgscale\undefined%
  \makeatother%
  \begin{picture}(1,0.192074)%
    \lineheight{1}%
    \setlength\tabcolsep{0pt}%
    \put(0,0){\includegraphics[width=\unitlength,page=1]{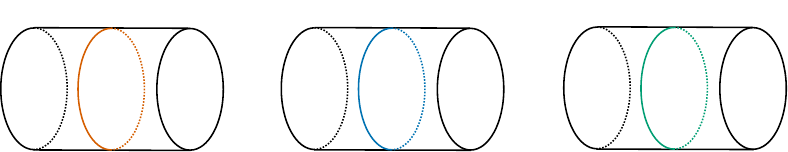}}%
    \put(0.12510406,0.17545703){\makebox(0,0)[lt]{\lineheight{1.25}\smash{\begin{tabular}[t]{l}$\Sigma_1$\end{tabular}}}}%
    \put(0.10759631,0.07613839){\makebox(0,0)[lt]{\lineheight{1.25}\smash{\begin{tabular}[t]{l}$\delta_1$\end{tabular}}}}%
    \put(0.48121576,0.17463136){\makebox(0,0)[lt]{\lineheight{1.25}\smash{\begin{tabular}[t]{l}$\Sigma_2$\end{tabular}}}}%
    \put(0.46370802,0.07531272){\makebox(0,0)[lt]{\lineheight{1.25}\smash{\begin{tabular}[t]{l}$\delta_2$\end{tabular}}}}%
    \put(0.8381701,0.17639859){\makebox(0,0)[lt]{\lineheight{1.25}\smash{\begin{tabular}[t]{l}$\Sigma_3$\end{tabular}}}}%
    \put(0.82066247,0.07707995){\makebox(0,0)[lt]{\lineheight{1.25}\smash{\begin{tabular}[t]{l}$\delta_3$\end{tabular}}}}%
  \end{picture}%
\endgroup%

	\caption{Triple Heegaard diagram of $(S^1\times S^2) \# (S^1 \times S^2)$ with complexity 3.}
	\label{2S1xS2_figure}
\end{figure}

\section{Pseudo-trisections}\label{4man_section} In this section we introduce \emph{pseudo-trisections}, which are a generalisation of relative trisections of compact 4-manifolds with connected boundary. We also describe several notions of stabilisation, and prove that pseudo-trisections are stably equivalent. Finally we introduce \emph{complexity} of pseudo-trisections and use it to compare pseudo-trisections and relative trisections.

\subsection{Pseudo-trisections and their properties}
In this subsection we define pseudo-trisections and inspect some of their properties.

\begin{defn} An \emph{$n$-dimensional 1-handlebody of genus $k$} is a manifold with boundary diffeomorphic to $\natural^k (S^1 \times B^{n-1})$.
\end{defn}
\begin{defn}\label{ptri}A \emph{$(g, \boldsymbol{k}; \boldsymbol{y},b)$-pseudo-trisection} of a compact connected oriented smooth 4-manifold $X$ with one boundary component $Y = \partial X$ is a decomposition $X = X_1 \cup X_2 \cup X_3$ such that
	\begin{enumerate}
		\item each $X_i$ is a 4-dimensional 1-handlebody of genus $k_i$,
		\item each $H_i = X_{i-1} \cap X_i$ is a 3-dimensional 1-handlebody of genus $h_i$ for some $h_i$,
		\item each $Y_i = X_{i} \cap \partial X$ is a 3-dimensional 1-handlebody of genus $y_i$,
		\item $\Sigma_C = X_1 \cap X_2 \cap X_3$ is a surface of genus $g$ with $b$ boundary components,
		\item each $\Sigma_i = Y_{i-1} \cap Y_i$ is a surface of some genus $p_i$ and $b$ boundary components, and
		\item $B = X_1 \cap X_2 \cap X_3 \cap \partial X$ is a $b$-component link.
	\end{enumerate}
See Figure \ref{pseudo_schematic_figure} for a schematic of how the various pieces fit together.
\end{defn}

\begin{figure}	
	\noindent\hspace{0px}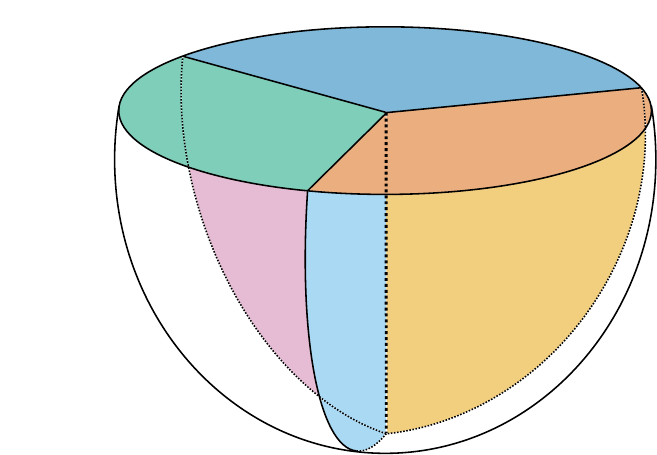
	\caption{The components of a pseudo-trisection of a 4-manifold $X$ with boundary $Y$.}
\label{pseudo_schematic_figure}
\end{figure}

\begin{defn}We fix some terminology: a \emph{sector} of a pseudo-trisection is any of the $X_i$. The \emph{binding} of a pseudo-trisection is the link $B$ (considered as a link in $\partial X$). Finally, the \emph{3-skeleton} of a pseudo-trisection is the union of all 3-dimensional pieces, $H_1 \cup H_2 \cup H_3 \cup \partial X$; and the \emph{2-skeleton} of a pseudo-trisection is the union of all 2-dimensional pieces, $\Sigma_C \cup \Sigma_1 \cup \Sigma_2 \cup \Sigma_3$. We avoid the term \emph{spine}, although it is commonly used to refer to the 3-skeleta of closed 4-manifold trisections.
\end{defn}
\begin{prop}\label{properties}Pseudo-trisections satisfy the following properties:
	\begin{enumerate}
		\item The restriction of a $(g, \boldsymbol{k}, \boldsymbol{y}, b)$-pseudo-trisection of $X$ to $\partial X = Y$ is a $(\boldsymbol{y}, b)$-trisection of $Y$.
		\item The genera $p_i$ of the surfaces $\Sigma_i$ are given by
			$$p_i = \tfrac{1}{2}(y_{i-1} + y_i - y_{i+1} - b + 1).$$
		\item The genera $h_i$ of the handlebodies $H_i$ are given by $$h_i = g+p_i + b - 1 = g + \tfrac{1}{2}(y_{i-1} + y_i - y_{i+1} + b - 1).$$
		\item $\chi(X) = g - |\boldsymbol{k}| + \frac{1}{2}(|\boldsymbol{y}| + b + 1)$.
	\end{enumerate}
\end{prop}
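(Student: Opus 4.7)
The plan is to address the four items sequentially, since each one feeds into the next.

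For (1), I would verify directly from the pseudo-trisection axioms that restricting all the relevant pieces to $\partial X = Y$ produces exactly the data required by the definition of a 3-manifold trisection from Section \ref{3man_section}: the $Y_i$ are 3-dimensional handlebodies of genus $y_i$, each $\Sigma_i = Y_{i-1} \cap Y_i$ is a compact surface whose boundary is precisely $B$ (a $b$-component link), and the triple intersection $Y_1 \cap Y_2 \cap Y_3$ is $B$. Item (2) is then an immediate consequence of Proposition \ref{3mantriprop}(2) applied to this induced trisection of $Y$.

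For (3), the key observation is that the boundary of the handlebody $H_i$ is the closed surface $\partial H_i = \Sigma_C \cup_B \Sigma_i$, obtained by gluing $\Sigma_C$ (genus $g$, $b$ boundary circles) and $\Sigma_i$ (genus $p_i$, $b$ boundary circles) along their common binding $B$. A short Euler characteristic computation shows that such a gluing yields a closed surface of genus $g + p_i + b - 1$. Since $H_i$ is a 3-dimensional 1-handlebody and is therefore determined by the genus of its boundary, this forces $h_i = g + p_i + b - 1$; substituting (2) gives the second equality in (3).

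For (4), I would use inclusion-exclusion on the decomposition $X = X_1 \cup X_2 \cup X_3$, whose pairwise intersections are the $H_i$ and whose triple intersection is $\Sigma_C$:
\[
\chi(X) = \sum_i \chi(X_i) - \sum_i \chi(H_i) + \chi(\Sigma_C).
\]
Substituting $\chi(X_i) = 1 - k_i$, $\chi(H_i) = 1 - h_i$, and $\chi(\Sigma_C) = 2 - 2g - b$, then using (3) for each $h_i$ together with the identity $|\boldsymbol p| = \tfrac{1}{2}(|\boldsymbol y| - 3b + 3)$ that follows from (2), should yield the claimed formula after routine simplification. The main obstacle is really (3): one must take care that $\Sigma_C$ and $\Sigma_i$ meet exactly along the binding $B$ and not merely share the same number of boundary components, so that $\partial H_i$ truly assembles into a closed surface of the predicted genus. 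Once (3) is in place, (4) is pure bookkeeping.
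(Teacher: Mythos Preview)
Your proposal is correct and follows essentially the same approach as the paper: item (1) is read off from the definition, item (2) comes from Proposition~\ref{3mantriprop}, item (3) follows from $\partial H_i = \Sigma_C \cup_B \Sigma_i$, and item (4) is obtained by inclusion--exclusion on $X = X_1 \cup X_2 \cup X_3$. The paper's proof is terser but the underlying argument is identical.
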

\begin{proof}The first fact is immediate from the definition - specifically, items (3), (5), and (6) are exactly necessary properties for $(Y_1, Y_2, Y_3)$ to be a trisection of $Y$. The second fact is from Proposition \ref{3mantriprop}, restated for completeness.
	The third fact follows from the fact that $H_i$ has boundary $\Sigma_i \cup \Sigma_C$.
Finally, the Euler characteristic formula is derived by repeated applications of the difference formula $\chi(A\cup B) = \chi(A) + \chi(B) - \chi(A\cap B)$. Note that $|\boldsymbol y| + b$ is odd because $|\boldsymbol y| + b = 2|\boldsymbol p| + 4b - 3$, so in each of the last three properties, no half-integers appear.
\end{proof}
\subsection{Relative trisections}
Pseudo-trisections are morally generalisations of \emph{relative trisections}, described in detail in \cite{CAS1}. We formalise the relationship between relative trisections and pseudo-trisections in Proposition \ref{relativeispseudo}. Here we review the definition of relative trisections and some fundamental results concerning relative trisections.

In order to define relative trisections, we must first fix notation for some standard model pieces---we use notation as in \cite{GayCaiCas}. A \emph{relative trisection} will then be defined in terms of these standard pieces.

Let $g,k,p,b$ be non-negative integers with $g \geq p$ and $g+p+b-1 \geq k \geq 2p+b-1$. Let $Z_k = \natural^k S^1 \times B^3$, and $Y_k = \partial Z_k = \#^k S^1 \times S^2$. Next we describe a decomposition of $Y_k$ into three pieces, two corresponding to intersections of sectors in the relative trisection, and the other corresponding to a third of the boundary of the manifold.

Let $D = \{(r, \theta): r \in [0,1], \theta \in [-\pi/3, \pi/3]\}$. Write
$$\partial D = \partial^- D \cup \partial^0 D \cup \partial^+ D,$$
where $\partial^- D$ and $\partial^+ D$ are the edges with $\theta = -\pi/3$ and $\theta = \pi/3$ respectively, and $\partial^0 D$ is the arc. Next, let $P$ be a surface of genus $p$ with $b$ boundary components, and define $U = D \times P$, with
$$\partial U = \partial^- U \cup \partial^0 U \cup \partial^+ U,\quad \partial^\pm U = \partial^\pm D \times P, \partial^0 U = (\partial^0 D \times P)\cup (D \times \partial P).$$
Let $V = \natural^{k-2p-b+1} S^1 \times B^3$. Notice that $\partial V$ has a standard Heegaard splitting. Let $\partial V = \partial^-V \cup \partial^+V$ be the splitting obtained by stabilising the standard Heegaard splitting exactly $g-k+p+b-1$ times.

Since $(k-2p-b+1) + (2p+b-1) = k$, there is an identification $Z_k = U \natural V$. In particular, the boundary connect sum can be taken so that the decompositions of the boundaries align, giving
$$Y^{\pm}_{g,k;p,b} = \partial^{\pm}U \natural \partial^{\pm}V,\quad Y^0_{g,k;p,b} = \partial^0U.$$

\begin{defn}\cite{CAS1} A \emph{$(g,k;p,b)$-trisection} or \emph{$(g,k;p,b)$-relative trisection} of a compact connected oriented 4-manifold $X$ with connected boundary is a decomposition $X_1 \cup X_2 \cup X_3$ such that:
	\begin{enumerate}
		\item there is a diffeomorphism $\varphi_i : X_i \to Z_k$ for each $i$, and
		\item $\varphi_i(X_i \cap X_{i+1}) = Y^-_{g,k;p,b}$, $\varphi_i(X_i \cap X_{i-1}) = Y^+_{g,k;,p,b}$, and $\varphi_i(X_i \cap \partial X) = Y^0_{g,k;p,b}$.
	\end{enumerate}
\end{defn}

\begin{remark}The indices used in the theory of relative trisections are $(g,k;p,b)$, as opposed to $(g,\boldsymbol{k}; \boldsymbol{y}, b)$ for pseudo-trisections. First note that relative trisections as introduced here are \emph{balanced}, so each of the $k_i$ and $p_i$ are independent of $i$. A more significant difference is that in pseudo-trisections we've chosen to work with $y_i$ rather than $p_i$, that is the genera of $Y_i$ rather than the genera of $\Sigma_i$. This is so that we can easily read off that the boundary is $(\boldsymbol{y}, b)$-trisected. We also know how to convert between indices by Proposition \ref{properties}.\end{remark}

\begin{prop}[Gay-Kirby \cite{GayKir}] A relative trisection of a 4-manifold restricts to an open book on the boundary.
\end{prop}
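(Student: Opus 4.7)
The plan is to assemble the three boundary model pieces $Y^0_{g,k;p,b} = \partial^0 U$ coming from the three sectors and to exhibit the resulting closed $3$-manifold $\partial X$ as an open book, explicitly identifying the page, the binding, and the fibration to $S^1$. The page will turn out to be $P$, the binding will be a copy of $\partial P$ (hence a $b$-component link), and the fibration will come from packaging the angular coordinate $\theta$ on $D$ across the three sectors.

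First I would unpack a single piece $Y^0 = (\partial^0 D \times P) \cup (D \times \partial P)$ and define a local projection $\pi_i : Y^0 \setminus (\{0\} \times \partial P) \to \partial^0 D \cong [-\pi/3, \pi/3]$ by projecting to the first coordinate on $\partial^0 D \times P$ and by the angular coordinate $\theta$ on $D \times \partial P$ (away from $\{0\} \times \partial P$). The two definitions agree on the overlap $\partial^0 D \times \partial P$, so $\pi_i$ is well-defined; its fibers are copies of $P$ with a radial collar on $\partial P$ attached from $D \times \partial P$.

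Next I would assemble the three sectors. Since $\varphi_i(X_i \cap X_{i\pm 1}) = Y^\mp$ and $\partial^\pm U \cap \partial^0 U$ is diffeomorphic to $P$ (namely $\{\pm \pi/3\} \times P$ with the radial collar $\partial^\pm D \times \partial P$ attached), the three pieces $Y^0_i$ glue along copies of $P$ to form $\partial X$. At the level of the angular factor, the three wedges $D$ glue along their radial edges $\partial^\pm D$ into a single disk $D^2$, and the three arcs $\partial^0 D$ glue at their endpoints into a circle $S^1$. The local maps $\pi_i$ therefore combine into a global map $\pi : \partial X \setminus B \to S^1$, with binding $B = X_1 \cap X_2 \cap X_3 \cap \partial X = \{0\} \times \partial P \subset D^2 \times \partial P$, a $b$-component link. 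Local triviality of $\pi$ is then checked on the two standard pieces: away from the binding neighborhood $D^2 \times \partial P$, the sector gluings are by diffeomorphisms of $P$ and produce a mapping torus structure over $S^1$; inside $D^2 \times \partial P$, the map $\pi$ is the standard angular coordinate on each $D^2 \setminus \{0\}$, matching the standard open book neighborhood model near the binding.

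The main obstacle is the careful bookkeeping to verify that the sector gluings $\varphi_i$ respect the product decomposition of $Y^0$ into $\partial^0 D \times P$ and $D \times \partial P$ compatibly across sectors. Concretely, one must confirm that the $P$-factors in adjacent sectors are identified via self-diffeomorphisms of $P$ rather than being twisted with the angular factor, so that the local projections $\pi_i$ actually fit into a single $S^1$-valued fibration rather than only piecewise-defined maps; this is ensured by the fact that $\partial^\pm U = \partial^\pm D \times P$ sits as a common sub-product of the two adjacent boundary model pieces.
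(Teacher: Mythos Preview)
The paper does not actually prove this proposition: it is stated as a known result of Gay--Kirby with a citation and no argument, and the text immediately moves on to discuss the relationship with pseudo-trisections. So there is no ``paper's own proof'' to compare against.

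That said, your proposal is a correct and natural reconstruction of why the result holds. You correctly identify that each $Y^0_{g,k;p,b} = (\partial^0 D \times P) \cup (D \times \partial P)$ is a ``third'' of an open book with page $P$, that the three pieces glue along copies of $P$ (sitting as $\partial^\pm D \times P$ with the collar from $\partial^\pm D \times \partial P$), and that the angular coordinates on the three wedges $D$ assemble into a single $S^1$-valued fibration away from the binding $B = \{0\} \times \partial P$. Your identification of the main subtlety---that the gluing diffeomorphisms must respect the $P$-factor so the local projections $\pi_i$ patch---is also the right thing to flag, and your justification via the product structure of $\partial^\pm U$ is adequate at this level of detail. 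This is essentially how the result is established in the original reference.
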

This proposition hints at a connection between relative trisections and pseudo-trisections, as we've shown that trisections of 3-manifolds are generalisations of open books, and pseudo-trisections restrict to 3-manifold trisections on their boundaries.

\begin{theorem}[Castro-Islambouli-Miller-Tomova \cite{CasIslMilTom}] Any two relative trisections of a given 4-manifold $X$ are equivalent up to interior stabilisations, relative stabilisations, and relative double twists.
\end{theorem}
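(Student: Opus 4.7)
The plan is to reduce the uniqueness question in the interior of $X$ to a uniqueness question about open books on $\partial X$. The key first step I would carry out is to build a precise dictionary between the three moves and their effect on the induced boundary open book: interior stabilisation does not alter the open book on $\partial X$; relative stabilisation realises a positive Hopf stabilisation of that open book (this is the content of Castro's contribution); and a relative double twist modifies the open book monodromy by composition with a specific Dehn twist along a curve lying in the page. With this dictionary in hand, I would split the proof into a boundary part and an interior part.

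For the boundary part, given two relative trisections $\mathcal T$ and $\mathcal T'$ of $X$ inducing open books $\mathcal O$ and $\mathcal O'$ on $\partial X$, I would invoke the theorem of Giroux (building on Harer) that any two open books on a given closed 3-manifold become isomorphic after finitely many positive Hopf stabilisations. These stabilisations are realised on the trisection side by relative stabilisations, so after finitely many moves the two induced open books have diffeomorphic pages and bindings. The monodromies may still differ, but any such difference is an element of the mapping class group of the page, which is generated by Dehn twists. I would then apply relative double twists to successively kill this discrepancy until $\mathcal O$ and $\mathcal O'$ agree on the nose.

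For the interior part, once both trisections induce the same open book on $\partial X$, the Gay--Kirby relative uniqueness theorem applies: two relative trisections of $X$ inducing a fixed open book on the boundary are equivalent up to interior stabilisations. This is proved by Cerf-theoretic methods: one interpolates between the two trisections by a generic one-parameter family of Morse 2-functions $X \to D^2$ with prescribed boundary behaviour matching the fixed open book, and identifies each generic crossing of the family with an interior stabilisation or its inverse.

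The hard part will be the monodromy step in the boundary portion. A relative double twist only realises a Dehn twist along a curve that can be geometrically realised inside the trisection structure, so it is not automatic that products of such twists can express an arbitrary element of the mapping class group. The task is therefore to show that, possibly after further relative stabilisations that enlarge the page and provide access to more curves, the available Dehn twists generate a rich enough subgroup to undo any monodromy discrepancy. This is fundamentally a mapping-class-group problem rather than a differential-topological one, and it is the crux of CIMT's contribution.
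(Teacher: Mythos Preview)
Your overall architecture---reduce to a question about open books on $\partial X$, then invoke Gay--Kirby/Castro interior uniqueness once the boundary data agree---matches the paper's strategy. The interior part is exactly as you describe.

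The boundary part, however, contains a genuine error. You invoke ``the theorem of Giroux (building on Harer) that any two open books on a given closed 3-manifold become isomorphic after finitely many positive Hopf stabilisations.'' This is false. The Giroux correspondence says that open books up to positive Hopf stabilisation are in bijection with contact structures up to isotopy; since a closed 3-manifold carries many non-isotopic contact structures, positive Hopf stabilisation alone cannot relate arbitrary open books. Even allowing both positive and negative Hopf bands, it is not known in general that stabilisation alone suffices. The correct input is Theorem~3.5 of Piergallini--Zuddas: any two open books on a given 3-manifold are equivalent up to Hopf stabilisation \emph{and} a second move, the $\partial U$ move.

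This also changes the role of the relative double twist. It is not introduced as a device for realising arbitrary Dehn twists on the page monodromy (your framing of the ``hard part''); rather, it is constructed precisely as the 4-dimensional extension of the specific $\partial U$ move. So the mapping-class-group problem you anticipate is not the one actually solved: the work in \cite{CasIslMilTom} is to show that the $\partial U$ move on the boundary open book lifts to a move on the relative trisection, and the Piergallini--Zuddas theorem already guarantees that Hopf stabilisation plus $\partial U$ is a complete set of moves at the boundary level. Once you replace your Giroux--Harer step with Piergallini--Zuddas and reinterpret the double twist accordingly, your outline becomes the paper's proof.
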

\begin{proof}The reader is directed to \cite{CAS1} for a description of relative stabilisation, and a proof that any two relative trisections of 4-manifolds are equivalent up to internal and relative stabilisations, provided the induced open books on the boundary are equivalent up to Hopf stabilisation. Theorem 3.5 of \cite{PieZud} shows that all open books of a given 3-manifold are equivalent up to Hopf stabilisation and the $\partial U$ move (described in \cite{PieZud}). The $\partial U$ move is extended to the \emph{relative double twist} move of relative trisections in \cite{CasIslMilTom}, completing the proof.
\end{proof}
In Subsection \ref{stabsection} we establish the analogous result for pseudo-trisections.

\begin{prop}\cite{GayCaiCas} Every relative trisection is uniquely encoded by a \emph{relative trisection diagram}. (The diagrams are unique up to handleslides and diffeomorphism.)
\end{prop}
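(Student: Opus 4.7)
The plan is to adapt the proof strategies of Proposition \ref{3mantriuptodiffeo} and Lemma \ref{diagramuniqueness_lem} to the relative 4-dimensional setting. A relative trisection diagram encodes the data of a central surface $\Sigma_C$ together with three cut systems $\boldsymbol{\alpha}_1, \boldsymbol{\alpha}_2, \boldsymbol{\alpha}_3$ (of curves and arcs) subject to the condition that each pair $\boldsymbol{\alpha}_i \cup \boldsymbol{\alpha}_{i+1}$ determines the standard Heegaard splitting of $\partial(\natural^k S^1 \times B^3)$ corresponding to the model pieces $Y^{\pm}_{g,k;p,b}$.

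\textbf{Realisation map.} Given such a diagram, one first reconstructs the 3-skeleton. Each pair $\boldsymbol{\alpha}_i \cup \boldsymbol{\alpha}_{i+1}$ determines a 3-dimensional handlebody $H_{i+1}$ with boundary $\Sigma_C \cup \Sigma_{i+1}$, uniquely up to diffeomorphism rel boundary, exactly as in the proof of Proposition \ref{3mantriuptodiffeo} using the contractibility of $\mathrm{Diff}(B^n \mathrm{\ rel\ }\partial)$ for $n = 2, 3$. Gluing the three handlebodies along $\Sigma_C$ and attaching the standard boundary model $Y^0_{g,k;p,b}$ produces the 3-skeleton together with $\partial X$. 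To fill in each 4-dimensional sector $X_i$, one observes that its prescribed boundary is $H_i \cup_{\Sigma_C} H_{i+1} \cup Y^0_{g,k;p,b} \cong \#^k S^1 \times S^2$, and invokes Laudenbach--Po\'enaru: a 4-dimensional 1-handlebody filling of $\#^k S^1 \times S^2$ is unique up to diffeomorphism rel boundary. Assembling the three sectors then yields a well-defined trisected 4-manifold.

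\textbf{Uniqueness of the diagram.} Conversely, given a relative trisection of $X$, the central surface $\Sigma_C$ is canonical, and each cut system $\boldsymbol{\alpha}_i$ arises as a meridian system for the handlebody $H_{i+1}$ recording how the compressing disks sit with respect to $\Sigma_C$ (and $\Sigma_{i+1}$). By Johannson's theorem (Corollary 1.6 of \cite{Joh}, cited in Lemma \ref{diagramuniqueness_lem}), meridian systems of a handlebody are unique up to isotopy and handleslides, so the diagram is well-defined up to handleslides and diffeomorphism.

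\textbf{Main obstacle.} The essential non-trivial input, absent in the 3-dimensional setting of Proposition \ref{3mantriuptodiffeo}, is the Laudenbach--Po\'enaru theorem guaranteeing uniqueness of the 4-dimensional 1-handlebody filling. We cannot substitute a contractibility argument for $\mathrm{Diff}(B^4 \mathrm{\ rel\ }\partial)$, and the uniqueness of the filling by $\natural^k S^1 \times B^3$ is precisely what makes a diagrammatic theory of 4-manifold trisections possible. Once this 4-dimensional rigidity is in hand, the remainder of the argument is a direct translation of the triple Heegaard diagram proofs, where the role of the Heegaard surface is played by the central surface $\Sigma_C$ and the role of the handlebodies is played by the sectors $X_i$.
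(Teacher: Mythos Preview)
The paper does not prove this proposition; it is cited as an external result from \cite{GayCaiCas}, and immediately afterwards the author writes ``We do not elaborate on the definition of a relative trisection diagram, but the key takeaway is that relative trisections are encoded by diagrams of curves on a surface.'' There is therefore no proof in the paper to compare your proposal against.

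That said, your sketch contains a genuine conceptual slip worth flagging. A relative trisection diagram in the sense of \cite{GayCaiCas} is a single surface $\Sigma$ (with boundary) carrying three families of \emph{simple closed curves}, not arcs; the boundary pieces $\Sigma_i$ do not appear in the diagram at all. Your description---a central surface with cut systems of curves and arcs, and handlebodies with boundary $\Sigma_C \cup \Sigma_{i+1}$---is essentially the pseudo-trisection diagram framework introduced later in this paper, not the relative trisection diagram framework being cited. In the relative setting the reconstruction of the open book on $\partial X$ from the diagram is a nontrivial monodromy computation (the algorithm in \cite{GayCaiCas2}), which you have replaced by ``attaching the standard boundary model $Y^0_{g,k;p,b}$'' without explaining how the diagram determines the gluing. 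The Laudenbach--Po\'enaru and Johannson inputs you identify are indeed the right 4- and 3-dimensional rigidity ingredients, but the packaging of the diagram data and the boundary reconstruction are not as you describe.
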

We do not elaborate on the definition of a relative trisection diagram, but the key takeaway is that relative trisections are encoded by diagrams of curves on a surface. We show analogously in Subsection \ref{diagramssection} that pseudo-trisections are uniquely encoded by \emph{pseudo-trisection diagrams}. 

\subsection{Examples of pseudo-trisections}
In this subsection we describe several examples of pseudo-trisections.
\begin{eg}\label{trivialB4}\emph{The trivial trisection of $B^4$.} Let $X_1, X_2, X_3$ each be 4-balls. The boundary of each $X_i$ further admits the trivial trisection (in the context of 3-manifolds). We denote these sectors of $\partial X_i$ by $Y_{i,j}$. Gluing $X_i$ to $X_{i+1}$ by identifying $Y_{i,2}$ and $Y_{i+1,3}$ produces $B^4$. Explicitly, the pseudo-trisection consists of:
	\begin{itemize}
		\item $X_1, X_2, X_3$ as described,
		\item $H_i = Y_{i-1,2} = Y_{i,3}$,
		\item $Y_i = Y_{i, 1}$,
	\end{itemize}
	and the lower dimensional pieces are obtained by appropriate intersections. Note that this is also a relative trisection.
\end{eg}

\begin{eg}\label{B3xS1}\emph{A pseudo-trisection of $B^3\times S^1$ which is not a relative trisection.} Let $X_1, X_2 = B^3 \times S^1$, and $X_3 = B^4$. $X_1$ and $X_2$ have boundary $S^2 \times S^1$. Consider the trisection of $S^2 \times S^1$ induced by its genus 1 Heegaard splitting, as well as the standard trisection of $S^3$. We denote these sectors by $Y_{i,j}$, where $Y_{1,1}, Y_{1,2}, Y_{2,1}, Y_{2,3}$ are solid tori, and $Y_{1,3}$ and $Y_{2,2}$ are solid balls. Let $Y_{3,j}$ be the sectors of the trivial trisection of the boundary of $X_3$.

	We claim that gluing $X_i$ to $X_{i+1}$ by identifying $Y_{i,2}$ and $Y_{i+1, 3}$ produces $B^3 \times S^1$. To see this, observe that $X_1$ and $X_2$ are glued along a copy of $D^2 \times S^1$ in their boundaries. This is a boundary connected sum in the first entry, so the resulting manifold is still $B^3 \times S^1$. Next, $X_3$ is glued to $X_1 \cup X_3$ along two 3-balls, but these two 3-balls also meet along a disk. The result is another boundary connected sum. It follows that the resulting manifold is $B^3 \times S^1$.

The sectors of the pseudo-trisection are all labelled consistently with Example \ref{trivialB4}. See Figure \ref{B3xS1_figure} for a schematic of the pseudo-trisection. Note that this pseudo-trisection is not a relative trisection because the boundary doesn't inherit an open book structure.
\end{eg}
\begin{figure}	
	\noindent\hspace{0px}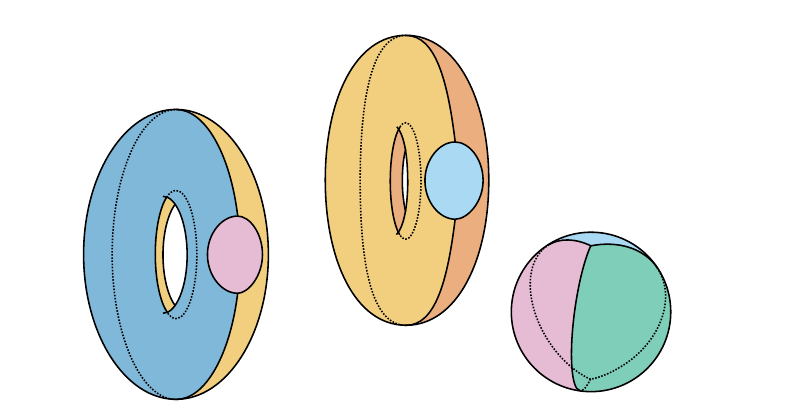
	\caption{A schematic of how the sectors are glued together in Example \ref{B3xS1}.}
\label{B3xS1_figure}
\end{figure}

\begin{eg}\label{B2xS2}\emph{A pseudo-trisection of $B^2 \times S^2$ which is not a relative trisection.} Let each of $X_1,X_2,X_3$ be a 4-ball. Writing $X_1$ and $X_2$ as $B^2 \times B^2$, the genus 1 Heegaard splittings of their boundaries can be written as $(S^1 \times B^2)\cup (B^2 \times S^1)$. These induce trisections
	$$\partial X_1 = (Y_{1,1}, Y_{1,2}, Y_{1,3}) = ((S^1\times B^2) - B, B^2 \times S^1, B),$$
	$$\partial X_2 = (Y_{2,1}, Y_{2,2}, Y_{2,3}) = ((S^1\times B^2) - B, B, B^2 \times S^1).$$
	In the above descriptions, $B$ is a ball removed from a sector of the Heegaard splitting to produce a trisection, as described in Example \ref{3admits}. Let the boundary of $X_3$ be a trivially trisected $S^3$ as in Example \ref{B3xS1}.

	Gluing the three sectors as in Examples \ref{trivialB4} and \ref{B3xS1}, we obtain a pseudo-trisection of $B^2 \times S^2$. To see this, notice that $X_1$ and $X_2$ are two copies of $B^2 \times B^2$ being glued along $B^2 \times S^1$, which produces $B^2 \times S^2$. As in the previous example, gluing $X_3$ does not alter the topology. Again, the sectors are labelled consistently with previous examples.
\end{eg}
\begin{eg}\label{CP2-B4}\emph{A pseudo-trisection of $\CP^2 - B^4$.} There is a standard genus 1 trisection $\mathcal T$ of $\CP^2$ as seen for example in \cite{GayKir}. A neighbourhood of a point in the central surface of $\mathcal T$ is a trivially trisected $B^4$. Removing this ball produces a relative trisection of $\CP^2 - B^4$ (which is also a pseudo-trisection). 
\end{eg}
\begin{eg}\label{bdy_connected_sum} Given pseudo-trisections $(X_1,\mathcal{T}_1)$ and $(X_2,\mathcal{T}_2)$, there are pseudo-trisections of the form $\mathcal{T}_1\natural\mathcal{T}_2$ on $X_1\natural X_2$. These are called \emph{boundary connected sums} of the pseudo-trisections, and restrict to a connected sum of the boundary trisections as described in Example \ref{connected_sum}. These are not unique---there are generally $3b_1 b_2$ distinct boundary connected sums, where $b_1$ and $b_2$ are the number of components of the bindings of $\mathcal{T}_1$ and $\mathcal{T}_2$ respectively.

Explicitly, we build the boundary connected sum as follows:
\begin{enumerate}
	\item Choose points $q_1$ and $q_2$ in the bindings $B_1$ and $B_2$ of $\mathcal{T}_1$ and $\mathcal{T}_2$ respectively. These each have standard neighbourhoods $N(q_1), N(q_2)$ in their respective pseudo-trisections, diffeomorphic to the trivial trisection. Let $M(q_1)$ and $M(q_2)$ denote the restriction of these neighbourhoods to $\partial X_1$ and $\partial X_2$. Note that we have made one of $b_1 b_2$ choices in specifying $q_1$ and $q_2$.
	\item The neighbourhoods $M(q_1)$ and $M(q_2)$ are trivially trisected in the sense that each is a 3-ball with an induced decomposition into three 3-balls, pairwise meeting along disks, with total-intersection an arc. These inherit labels $(Y_1^1, Y_2^1, Y_3^1)$ and $(Y_{1}^2, Y_2^2, Y_3^2)$ respectively. We take the boundary connected sum of $X_1$ and $X_2$ by identifying $M(q_1)$ with $M(q_2)$ while further respecting the decompositions of the balls into sectors.
	\item There are three choices we can make, since the identification can send $Y_1^1$ to any one of $Y_{i}^2$, and this determines the rest of the identification. In total, this means we have built one of $3b_1 b_2$ possible boundary connected sums.
\end{enumerate}
\end{eg}
\begin{remark}The \emph{boundary connected sum} of pseudo-trisections is distinct from the plumbing of relative trisections by Murasugi sum described in Theorem 3.20 of \cite{CasIslMilTom}. Specifically, a boundary connected sum of two relative trisections will generally not be a relative trisection.
\end{remark}
\begin{prop}\label{relativeispseudo} A relative trisection of a 4-manifold with one boundary component is a pseudo-trisection, provided the binding of the induced open book on the boundary is non-empty.
\end{prop}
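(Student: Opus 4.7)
My plan is to take a relative trisection $(X_1, X_2, X_3)$ with standard charts $\varphi_i: X_i \to Z_k$ and verify each of the six conditions of Definition \ref{ptri} in turn. Conditions (1), (4), (5), and (6) should be essentially tautological upon unpacking the model $Z_k = U \natural V$: condition (1) since $X_i \cong Z_k = \natural^k(S^1\times B^3)$ is a 4-dimensional 1-handlebody of genus $k$; condition (4) since the central surface $\Sigma_C \cong (\partial^+ U \cap \partial^- U) \natural (\partial^+ V \cap \partial^- V) \cong P \natural \Sigma_V$ has genus $g$ and $b$ boundary components, where $\Sigma_V$ is the closed Heegaard surface for $\partial V$; condition (5) since each page $\Sigma_i \cong \partial^\pm U \cap \partial^0 U \cong P$ has genus $p$ and $b$ boundary components; and condition (6) since $B = \partial \Sigma_C$ is a $b$-component link.

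For condition (2) I would use $H_i \cong Y^\pm = \partial^\pm U \natural \partial^\pm V$, note that $\partial^\pm U \cong I \times P$ deformation-retracts to $P$ and so is a 3-dimensional handlebody of genus $2p+b-1$, observe that $\partial^\pm V$ is one side of the stabilised Heegaard splitting appearing in the construction of the model and hence a handlebody of genus $g-p$, and then use that the boundary connect sum of handlebodies is again a handlebody (of genus $g + p + b - 1$, matching Proposition \ref{properties}).

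The main obstacle will be condition (3): the requirement that $Y_i \cong Y^0 = (\partial^0 D \times P) \cup (D \times \partial P)$ be a 3-dimensional 1-handlebody. This is precisely where the non-empty-binding hypothesis is used, since when $b = 0$ we would have $Y^0 = I \times P$ whose boundary $P \sqcup P$ is not even connected, so $Y^0$ is not a handlebody for $p \geq 1$. For $b \geq 1$ the cleanest argument is to combine the Gay-Kirby result that a relative trisection induces an open book on $\partial X$ with Example \ref{3admits}: an open book with non-empty binding determines a 3-manifold trisection, and the three sectors of this induced trisection coincide with the $Y_i = X_i \cap \partial X$; each $Y_i$ is then a handlebody by the very definition of a 3-manifold trisection. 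A more hands-on alternative is to check directly that each of the $b$ solid tori $D \times S^1$ in $Y^0$ is attached to $\partial^0 D \times P \cong I \times P$ along a longitudinal annulus in its boundary torus, and that each such attachment has the effect of capping off one boundary circle of $P$; iterating over all $b$ boundary circles exhibits $Y^0$ as a handlebody of genus $2p+b-1$, as predicted by Proposition \ref{properties}.
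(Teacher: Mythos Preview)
Your approach matches the paper's: verify the six conditions of Definition~\ref{ptri} by unpacking the model pieces, with the work concentrated in conditions (2) and (3). For condition (2) the paper phrases $Y^\pm$ as a compression body from $\Sigma_{g,b}$ to $\Sigma_{p,b}$ rather than using the $\partial^\pm U \natural \partial^\pm V$ decomposition directly, but your route is equally valid and yields the same genus $g+p+b-1$; note that your claim ``$I\times P$ is a handlebody of genus $2p+b-1$'' already uses $b\geq 1$, so the hypothesis enters condition (2) as well as (3).

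One correction in your hands-on alternative for condition (3): attaching the solid torus $D\times S^1$ to $I\times P$ along the longitudinal annulus $\partial^0 D \times S^1 \cong I \times S^1$ does \emph{not} cap off a boundary circle of $P$---it is simply a collar attachment (since $D$ retracts onto the arc $\partial^0 D$) and hence does not change the diffeomorphism type at all. The paper's phrasing is that after all $b$ such attachments $Y^0$ is still diffeomorphic to $I\times \Sigma_{p,b}$, which for $b\geq 1$ is a handlebody of genus $2p+b-1$. Your stated conclusion is correct, but the ``capping off'' picture would, if taken literally, produce $I\times \Sigma_{p,0}$, which is not a handlebody.
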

\begin{proof} It must be shown that relative trisections as above satisfy all six of the defining properties of pseudo-trisections from Definition \ref{ptri}. Property 1 is immediate, since the definition of relative trisections requires that each sector $X_i$ is diffeomorphic to a standard piece $Z_k$. Properties 4, 5, and 6 are exactly \emph{observations 1, 2, and 3} at the start of section 4 of \cite{GayCaiCas2}. This leaves properties 2 and 3.

In a relative trisection, $H_i = X_i \cap X_{i-1}$ is necessarily diffeomorphic to a standard piece $Y^+_{g,k;p,b}$. This is a compression body from $\Sigma_{g,b}$ to $\Sigma_{p,b}$ obtained by compressing along $g-p$ simple closed curves. In the proposition statement we require that the binding of the induced open book in the boundary is non-empty, which is equivalent to requiring that $b$ is at least 1. A compression body of this form is diffeomorphic to a handlebody of genus $g+p+b-1$, verifying that property 2 is satisfied.

Finally we must verify that property 3 is satisfied. In the notation of \cite{GayCaiCas2}, we have that $X_i \cap \partial X$ is diffeomorphic to a standard piece $Y^0_{g,k;p,b}$, which is itself equal to $(D \times \partial \Sigma_{p,b}) \cup (I \times \Sigma_{p,b})$, where $D$ is a wedge of a disk and $I$ is the intersection of $D$ with the boundary of the underlying disk. Topologically the union corresponds to gluing $b$ solid tori to $I \times \Sigma_{p,b}$ along their longitudes, and thus $Y^0_{g,k;p,b}$ is diffeomorphic to $I \times \Sigma_{p,b}$. Again since $b \geq 1$, the latter is a handlebody (of genus $2p+b-1$).
\end{proof}
\begin{remark}The restriction to $(g,k,p,b)$-relative-trisections with $b\geq 1$ is not a strict condition. Firstly, any relative trisection can be \emph{Hopf-stabilised} \cite{CAS1} to increase $b$. Secondly, the usual recipe to construct a relative trisection is to first fix an open book on the boundary 3-manifold, and this open book can be chosen to have non-empty binding.
\end{remark}
\begin{cor}\label{pseudo_existence_1} Every compact connected oriented smooth 4-manifold with connected boundary admits a pseudo-trisection.
\end{cor}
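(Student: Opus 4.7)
The plan is essentially immediate from the machinery already set up, and in particular from Proposition \ref{relativeispseudo} together with the remark immediately preceding this corollary. The strategy is to reduce existence of pseudo-trisections to existence of relative trisections.

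First I would invoke Gay--Kirby's existence result \cite{GayKir}, which guarantees that any compact connected oriented smooth 4-manifold $X$ with connected boundary $Y$ admits a relative trisection. Moreover, one may prescribe the induced open book decomposition on the boundary; in particular, since every closed oriented 3-manifold admits an open book with non-empty binding, we may arrange that the induced open book on $Y$ has $b \geq 1$. Alternatively, if one starts with a relative trisection whose induced open book might have empty binding, a single Hopf stabilisation \cite{CAS1} increases $b$ by one, so we may always assume $b \geq 1$ without loss of generality.

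With the binding non-empty, Proposition \ref{relativeispseudo} applies directly and tells us that this relative trisection is in fact a pseudo-trisection. This finishes the proof.

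The main (and only) potential obstacle is the non-emptiness of the binding, but as just noted this is easily arranged either by an appropriate initial choice of open book on $Y$ or by a single Hopf stabilisation. No new construction is required, and the rest of the argument is bookkeeping.
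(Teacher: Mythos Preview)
Your proposal is correct and follows essentially the same approach as the paper: invoke existence of relative trisections with non-empty binding, then apply Proposition~\ref{relativeispseudo}. The paper's proof is the same one-line argument, citing \cite{CAS1} rather than \cite{GayKir} for existence; your added detail about ensuring $b\geq 1$ via Hopf stabilisation matches the remark preceding the corollary.
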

\begin{proof} Every such 4-manifold admits a relative trisection \cite{CAS1}. In particular, the binding can be chosen to be non-empty. This relative trisection is a pseudo-trisection.
\end{proof}

\subsection{Stabilisations and uniqueness of pseudo-trisections}\label{stabsection}
We recall internal stabilisation, and introduce two more notions of stabilisations of pseudo-trisections supported near the boundary. We then show that any pseudo-trisections of a given compact 4-manifold with connected boundary are equivalent under these three notions of stabilisation.

In \cite{CAS1}, it is shown that \emph{relative trisections} of a given 4-manifold with boundary $X$ are stably equivalent, provided their boundaries are stably equivalent. More precisely, two notions of stabilisation are used: \emph{internal stabilisation}, which doesn't affect the open book on the boundary, and \emph{Hopf stabilisation} which stabilises the open book in the boundary by adding a Hopf band. However, not all open books of a given 3-manifold are equivalent under Hopf stabilisation.

In the context of pseudo-trisections, the boundary 3-manifold is equipped with a trisection rather than an open book, and all trisections of a given 3-manifold are related by appropriate notions of stabilisation. This allows us to define a set of moves so that for any given 4-manifold, all pseudo-trisections of the manifold are stably equivalent, without conditioning over the boundary. 

\begin{defn}Given a pseudo-trisection $(X_1, X_2, X_3)$ of a 4-manifold $X$, an \emph{internal stabilisation} is a new pseudo-trisection $(X_1', X_2', X_3')$ of $X$ constructed as follows:
	\begin{enumerate}
		\item Choose a neatly embedded boundary parallel arc $\alpha$ in $H_i = X_i \cap X_{i-1}$ for some $i$, with endpoints on $\Sigma_C = X_1 \cap X_2 \cap X_3$.
		\item Let $N(\alpha)$ be a tubular neighbourhood of $\alpha$ in $X$ supported away from $\partial X$, and define
			\begin{itemize}
				\item $X_{i+1}' = X_{i+1} \cup \overline{N(\alpha)}$,
				\item $X_{i}' = X_i - N(\alpha)$,
				\item $X_{i-1}' = X_{i-1} - N(\alpha)$.
			\end{itemize}
	\end{enumerate}
\end{defn}
See Figure \ref{pseudo_stabilisations_figure} (left) for a schematic of internal stabilisation.

Notice that stabilisation increases the handlebody genus $k_{i+1}$ of $X_{i+1}$ by 1, while leaving $k_i$ and $k_{i-1}$ unchanged. The genus $g$ of $\Sigma_C$ also increases by 1. This move is essentially `one third' of the usual notion of stabilisation for balanced trisections of closed 4-manifolds.

\begin{defn}Given a pseudo-trisection $(X_1, X_2, X_3)$ of a 4-manifold $X$ with boundary $Y$, a \emph{boundary stabilisation} is a new pseudo-trisection $(X_1', X_2', X_3')$ of $X$ constructed as follows:
	\begin{enumerate}
		\item Choose a neatly embedded non-separating arc $\alpha$ in $\Sigma_i$. (Such an arc exists provided $\Sigma_i$ is not a disk.)
		\item Let $N(\alpha)$ be a tubular neighbourhood of $\alpha$ in $X$, and define
			\begin{itemize}
				\item $X_{i+1}' = X_{i+1} \cup \overline{N(\alpha)}$,
				\item $X_{i}' = X_i - N(\alpha)$,
				\item $X_{i-1}' = X_{i-1} - N(\alpha)$.
			\end{itemize}
	\end{enumerate}
\end{defn}
See Figure \ref{pseudo_stabilisations_figure} (middle) for a schematic of boundary stabilisation. 

The definition looks very similar to that of internal stabilisation---one way to differentiate them is that \emph{internal stabilisation} is defined with an arc $\alpha$ in $H_i$ that is far from $\Sigma_i = H_i \cap \partial X$, while \emph{boundary stabilisation} is defined with an arc $\alpha$ that lives entirely in $\partial X$.

We make the following observations:
\begin{enumerate}
	\item The restriction of $N(\alpha)$ to $Y$ is a tubular neighbhourhood of $\alpha$ in $Y$. Therefore the restriction of $(X_1', X_2', X_3')$ to $Y$ is a stabilisation (in the sense of Definition \ref{3manstab}) of the restriction of $(X_1, X_2, X_3)$ to $Y$. (More succinctly, the restriction of a boundary stabilisation is a 3-manifold stabilisation of the restriction.) In particular, if $(X_1, X_2, X_3)$ was a $(g, \boldsymbol k; \boldsymbol y, b)$-pseudo-trisection of $X$, then in the stabilisation $\boldsymbol y$ and $b$ change as described in Subsection \ref{3manbasics}.
	\item The genera $\boldsymbol k$ of the 4-dimensional sectors change in the same way as $\boldsymbol y$. That is, $k_{i+1}$ increases by 1, while $k_i$ and $k_{i-1}$ are unchanged.
	\item The genus $g$ of $\Sigma_C$ increases by 1 if the endpoints of $\alpha$ lie in different components of $B$, and is unchanged if the endpoints of $\alpha$ lie in the same component. This can be seen directly, as boundary stabilisation descends to a band attachment on $\Sigma_C$, but it can also be deduced using the Euler characteristic formula for pseudo-trisections in Proposition \ref{properties}.  
\end{enumerate}

\begin{defn} Given a pseudo-trisection $(X_1, X_2, X_3)$ of a 4-manifold $X$ with boundary $Y$, a \emph{Heegaard stabilisation} is a new pseudo-trisection $(X_1', X_2', X_3')$ of $X$ constructed as follows:
	\begin{enumerate}
		\item Choose a neatly embedded boundary parallel arc $\alpha$ in $Y_i = X_i \cap \partial X$ for some $i$, with both endpoints on $\Sigma_i$.
		\item Let $N(\alpha)$ be a tubular neighbourhood of $\alpha$ in $X_i$, and define
			\begin{itemize}
				\item $X_{i+1}' = X_{i+1}$,
				\item $X_i' = X_i - N(\alpha)$,
				\item $X_{i-1}' = X_{i-1} \cup \overline{N(\alpha)}$.
			\end{itemize}
	\end{enumerate}
\end{defn}
See Figure \ref{pseudo_stabilisations_figure} (right) for a schematic of Heegaard stabilisation.

We make the following observations:
\begin{enumerate}
	\item Analogously to boundary stabilisations restricting to stabilisations of the boundary, a \emph{Heegaard stabilisation} of a pseudo-trisection restricts to a \emph{Heegaard stabilisation} of the 3-manifold trisection of the boundary $Y$. In particular, a Heegaard stabilisation of a pseudo-trisection changes $y_i$ and $y_i-1$ by 1, while leaving $y_{i+1}$ and $b$ unchanged.
	\item The sector $X_{i-1}$ gains a 1-handle in the stabilisation, so $k_{i-1}$ increases by 1. On the other hand, $N(\alpha)$ being carved out of $X_{i}$ does not change the topology of $X_{i}$, since $N(\alpha)$ runs along the boundary of $X_{i}$. Therefore $k_{i}$ and $k_{i+1}$ are unchanged.
	\item The central surface $\Sigma_C$ is unchanged, so $b$ is unchanged. One can verify that all of these index changes are compatible with the Euler characteristic formula.
\end{enumerate}

\begin{figure}	
	\noindent\hspace{0px}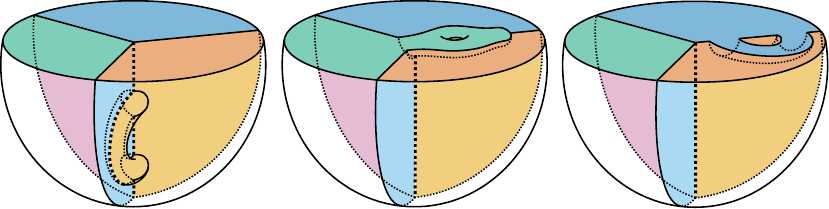
	\caption{From left to right: schematics of internal stabilisation, boundary stabilisation, and Heegaard stabilisation.}
\label{pseudo_stabilisations_figure}
\end{figure}

Internal stabilisation and Heegaard stabilisation are unified into the single notion of \emph{torus stabilisation} at the diagrammatic level, introduced later in Definition \ref{torus_stab_defn}.

\begin{thm1}Let $X$ be a compact connected oriented smooth 4-manifold with one boundary component. Any two pseudo-trisections of $X$ are equivalent up to internal, boundary, and Heegaard stabilisation.
\end{thm1}
\begin{proof}There are two steps: first we stabilise the pseudo trisections to agree on the boundary, and then stabilise the interior.

	Let $\mathcal T, \mathcal T'$ be pseudo-trisections of $X$. Their restrictions $\tau, \tau'$ to $Y = \partial X$ are trisections of $Y$. By Proposition \ref{3stabunique}, $\tau$ and $\tau'$ are stably equivalent. This means there is a trisection $\sigma$ of $Y$ and arcs $\{\alpha\}, \{\alpha'\}$ such that stabilisations and Heegaard stabilisations of $\tau$ along $\{\alpha\}$ produces $\sigma$, and similarly stabilisations and Heegaard stabilisations of $\tau'$ along $\{\alpha'\}$ produces $\sigma$.

	We now return to 4-dimensions: Boundary stabilising and Heegaard stabilising $\mathcal T$ along $\{\alpha\}$ produces a pseudo-trisection $\mathcal S$ which restricts to the 3-manifold trisection $\sigma$ on $Y$. On the other hand, boundary and Heegaard stabilising $\mathcal T'$ along $\{\alpha'\}$ produces a pseudo-trisection $\mathcal S'$ which also restricts to $\sigma$ on $Y$. We have successfully stabilised our pseudo-trisections to agree on the boundary.

	Next we turn to the interior. We proceed as in the proof of the analogous result for closed trisections in \cite{GayKir}. To that end, we require a cobordism structure on $X$, which we obtain by first turning our boundary trisection into a Heegaard splitting.

	Consider the sectors $X_1$ and $X_1'$ in $\mathcal S$ and $\mathcal S'$ respectively. These restrict to $Y_1 = Y_1'$ on the boundary. Choose a maximal collection of non-separating arcs in $\Sigma_3$. Boundary stabilising the two pseudo-trisections along these arcs produces two new pseudo-trisections (which we still call $\mathcal S$ and $\mathcal S'$). However, the restricted trisection on the boundary now has that $\Sigma_3$ is a disk. This means $Y_{2}\cup Y_{3}$ now glues along a disk, so it is itself a handlebody. In summary the decomposition $Y_{1} \cup (Y_{2} \cup Y_{3})$ is a Heegaard splitting.

	Next we view $X$ as a cobordism from $Y_{2} \cup Y_{3}$ to $Y_{1}$. Formally, we do this by considering a tubular neighbourhood $N$ of the Heegaard splitting surface $\Sigma_{1}\cup \Sigma_{2}$, so that $Y$ is of the form $(Y_1-N) \cup N \cup (Y_2 \cup Y_3 - N)$. The theorem now essentially follows from Theorem 21 of \cite{GayKir}: there are relative handle decompositions (equivalently, Morse functions), induced by the pseudo-trisections and compatible with the cobordism structure. Cerf theory tells us that the handle decompositions are related by a series of moves which correspond to internal stabilisation. In summary, after a sequence of internal stabilisations, our pseudo-trisections are isotopic.
\end{proof}

\begin{remark}The proof of Theorem \ref{dim4uniqueness} doesn't involve Heegaard stabilisation unless one of the original trisections of $X$ restricts to the trivial open book on $Y$. For example, if the boundary of $X$ is not a sphere, the above theorem holds using only boundary and internal stabilisations.
\end{remark}

\subsection{Complexity of pseudo-trisections}
This subsection introduces a notion of complexity for pseudo-trisections, which also applies to relative trisections. It can be considered a generalisation of the \emph{genus} of a trisection (of a closed manifold). We show that each notion of stabilization increases complexity by 1. The minimal complexity of a pseudo-trisection of a given 4-manifold with boundary is bounded above by the minimal complexity of a relative trisection (since relative trisections are pseudo-trisections), but we see with some examples that these minimal complexities do not generally agree.

\begin{defn}The \emph{complexity} of a $(g,\boldsymbol{k},\boldsymbol{y},b)$-pseudo-trisection $\mathcal{T}$ of a 4-manifold $X$ with boundary $\partial X$ is the integer
	$$c(\mathcal{T}) = \chi(X) + |\boldsymbol{k}|-1.$$
	Further, we write $c(\partial \mathcal T)$ to denote the complexity of the induced trisection on $\partial X$, and $c(\mathcal T, \partial \mathcal T)$ to denote $c(\mathcal T) + c(\partial \mathcal T)$. 
	We write $c(X)$ to denote the minimum complexity among all pseudo-trisections of $X$, and $c(X,\partial X)$ to denote the minimum value of $c(\mathcal T, \partial \mathcal T)$ among all pseudo-trisections of $X$.
\end{defn}
\begin{remark}It is immediate that $c(X,\partial X)$ is at least $c(X) + c(\partial X)$, but it is not known whether or not they are always equal.
\end{remark}
\begin{prop}\label{cplx_properties}
	Complexity satisfies the following properties:
	\begin{enumerate}
		\item Recalling that $p_i$ denotes the genus of the surface $\Sigma_i$ of the pseudo-trisection,
			$$c(\mathcal{T}) = g + \frac{1}{2}(|\boldsymbol{y}| + b - 1) = g + |\boldsymbol{p}| + 2b - 2.$$
		\item The three types of stabilisation for pseudo-trisections increase complexity by 1.
		\item Write $\beta_i$ for the Betti numbers of $X$.
			\begin{enumerate}
				\item For $\mathcal{T}$ a relative trisection, $c(\mathcal{T}) \geq \beta_1 + \beta_2$.
				\item For $\mathcal{T}$ a pseudo-trisection, $c(\mathcal{T},\partial\mathcal{T})\geq \beta_1 + \beta_2$.
			\end{enumerate}
		\item Given pseudo-trisected manifolds $(X_1, \mathcal{T}_1)$, $(X_2,\mathcal{T}_2)$, complexity is additive in the sense that $c(\mathcal{T}_1 \natural \mathcal{T}_2) = c(\mathcal{T}_1) + c(\mathcal{T}_2)$.
	\end{enumerate}
\end{prop}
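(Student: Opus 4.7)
\emph{Proof plan.} I will deduce items (1), (2), and (4) algebraically from the two equivalent formulations $c(\mathcal T) = \chi(X) + |\boldsymbol k| - 1$ (the definition) and $c(\mathcal T) = g + \tfrac{1}{2}(|\boldsymbol y| + b - 1)$ (established as item (1)). Item (3) requires a Morse-theoretic argument using the handle decomposition induced by the (pseudo-)trisection.

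For item (1), I substitute the Euler characteristic formula $\chi(X) = g - |\boldsymbol k| + \tfrac{1}{2}(|\boldsymbol y| + b + 1)$ of Proposition \ref{properties}(4) into the definition of $c(\mathcal T)$, which collapses to $g + \tfrac{1}{2}(|\boldsymbol y| + b - 1)$. The second form follows by using $y_i = p_i + p_{i+1} + b - 1$ from Proposition \ref{properties}(2) to rewrite $|\boldsymbol y| = 2|\boldsymbol p| + 3b - 3$. For item (2), since $\chi(X)$ is a topological invariant, the change in $c(\mathcal T)$ under any stabilisation equals the change in $|\boldsymbol k|$; the observations recorded immediately after each stabilisation definition show that each move enlarges exactly one 4-dimensional sector by a tubular neighbourhood of an arc, adding $1$ to exactly one entry of $\boldsymbol k$. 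For item (4), the boundary connected sum glues $X_1$ to $X_2$ along a 3-ball, so $\chi(X_1 \natural X_2) = \chi(X_1) + \chi(X_2) - 1$; and Example \ref{bdy_connected_sum} exhibits each new sector as itself a boundary connected sum of sectors from the two summands, giving $|\boldsymbol k|^{\natural} = |\boldsymbol k^1| + |\boldsymbol k^2|$. Substituting both into $c = \chi + |\boldsymbol k| - 1$ yields additivity.

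\emph{Main obstacle.} Item (3) is the hard part. The plan is to build a handle decomposition of $X$ from the (pseudo-)trisection and apply the Morse inequality $\beta_1(X) + \beta_2(X) \leq c_1 + c_2$, where $c_i$ counts index-$i$ critical points. Each sector $X_i$ contributes its canonical structure of one 0-handle and $k_i$ 1-handles; since $X$ is connected, two 0-1 pairs can be cancelled, leaving one 0-handle and $|\boldsymbol k| - 2$ 1-handles. The gluings along the 3-dimensional handlebodies $H_i$ (and, in the pseudo-trisection case, along the boundary pieces $Y_i$) are then realised by 2- and 3-handles whose counts are controlled by the genera $g, \boldsymbol p, \boldsymbol y$ and the number of boundary components $b$, using Proposition \ref{properties}. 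In the relative trisection case the open book on the boundary is collared and contributes no additional interior critical points, and the resulting count is $c_1 + c_2 \leq c(\mathcal T) = g + 3p + 2b - 2$. In the general pseudo-trisection case the boundary trisection itself contributes an extra $|\boldsymbol y| = c(\partial \mathcal T)$ critical points (reflecting the stabilisations required to realise the more general boundary structure), producing $c_1 + c_2 \leq c(\mathcal T, \partial \mathcal T)$. The delicate step, and the principal obstacle, is the handle-by-handle bookkeeping that organises these cancellations and verifies that the tightened count of 1- and 2-handles does not exceed the claimed complexity in either setting.
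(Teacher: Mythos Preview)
Your arguments for items (1), (2), and (4) are correct and essentially identical to the paper's.

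For item (3), however, your Morse-theoretic route diverges from the paper's proof and, as you yourself flag, has an unresolved gap. The paper does not build a handle decomposition at all. Instead it proceeds in two independent steps. First, from the long exact sequence of $(X,\partial X)$ and Lefschetz duality one gets $\beta_3 \leq \beta_1$, hence $\chi(X)-1 \geq \beta_2 - 2\beta_1$. Second, any loop in $X$ can be homotoped into the 2-skeleton of the pseudo-trisection, and therefore into $X_i \cup \Sigma_{i-1}$; this forces $\pi_1(X_i \cup \Sigma_{i-1}) \twoheadrightarrow \pi_1(X)$, giving $\beta_1 \leq k_i + 2p_{i-1} + b - 1$ for each $i$ and hence $3\beta_1 \leq |\boldsymbol k| + |\boldsymbol y|$. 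For relative trisections the open book lets one push the loop further into a single $X_i$, yielding the sharper $3\beta_1 \leq |\boldsymbol k|$. Adding the two inequalities gives the result directly.

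Your approach, by contrast, aims for $\beta_1 + \beta_2 \leq c_1 + c_2$ via Morse inequalities. But with your proposed count $c_1 = |\boldsymbol k| - 2$ this cannot succeed: substituting into $\chi(X) = 1 - c_1 + c_2 - c_3$ gives $c(\mathcal T) = 2 + c_2 - c_3$, so $c_1 + c_2 \leq c(\mathcal T)$ would require $|\boldsymbol k| - 4 \leq -c_3$, which fails whenever $|\boldsymbol k| \geq 4$. The underlying issue is that a naive handle decomposition only yields $\beta_1 \leq |\boldsymbol k| - 2$, whereas what is actually needed is the per-sector bound $\beta_1 \leq k_i$ (relative case) or $\beta_1 \leq k_i + 2p_{i-1} + b - 1$ (pseudo case). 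That sharper bound is precisely what the paper's $\pi_1$-surjectivity argument supplies, and it is not visible from handle counts alone.
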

\begin{proof}
	In item (1), the first equality is an application of the last item in Proposition \ref{properties}. The second equality is an application of the second item in the aforementioned proposition.

	For item (2), we use the observation that each of the three types of stabilisation for pseudo-trisections increases one of the $k_i$ by 1, while leaving the other two unchanged.

	For item (3), we require several steps. In step 1 we establish a general homological statement about 4-manifolds with boundary. In step 2 we establish bounds for $\beta_1$ in terms of $|\boldsymbol{k}|$ (and $|\boldsymbol{y}|$) for relative trisections (and pseudo-trisections).

	\emph{Step 1.} In the long exact sequence $H_4(X,\partial X) \xrightarrow{i} H_3(\partial X) \xrightarrow{j} H_3(X) \xrightarrow{k} H_3(X,\partial X)$, the map $k$ is necessarily injective because $i$ is an isomorphism. Combining this fact with the isomorphism $H^1(X) \cong H_3(X,\partial X)$, we have
$$\beta_1 = \rank H^1(X) = \rank H_3(X, \partial X) \geq \rank H_3(X) = \beta_3.$$
On the other hand, $\chi(X) = 1 - \beta_1 + \beta_2 - \beta_3$, so
$$\chi(X) - 1 \geq \beta_2 - 2\beta_1.$$
\emph{Step 2.} Suppose $\mathcal{T}$ is either a relative trisection or pseudo-trisection. Let $\gamma$ be a neatly embedded loop in $X$. Then $\gamma$ meets each $X_i$ along a set of arcs whose endpoints lie in $\partial X_i$. Since each $X_i$ is a handlebody, the arcs can be homotoped rel boundary to lie in $\partial X_i$. We do this in every sector, to obtain a loop $\gamma'$ contained entirely within the 3-skeleton of $\mathcal{T}$. Next we repeat this step a dimension lower: $\gamma'$ intersects each $H_i$ or $Y_i$ along an arc with endpoints in $\partial H_i$ or $\partial Y_i$ respectively. These arcs can again be homotoped rel boundary to lie within $\partial H_i$ and $\partial Y_i$ respectively. Doing this in every 3-dimensional sector, we obtain a loop $\gamma''$ contained entirely within the 2-skeleton of $\mathcal{T}$.

The proof now diverges for pseudo-trisections and relative trisections. In the former case, notice that $\gamma''$ can be viewed as loop in $X_i \cup \Sigma_{i-1}$ for any $i$. This means $\pi_1(X_i \cup \Sigma_{i-1})$ surjects onto $\pi_1(X)$. Taking into account how $\Sigma_{i-1}$ is glued to $X_i$, it follows that $2p_{i-1} + b - 1 + k_i \geq \beta_1$. This holds over each $i$, giving
$$|\boldsymbol k| + |\boldsymbol{y}| = 2|\boldsymbol{p}| + 3b - 3 + |\boldsymbol{k}| \geq 3\beta_1.$$
Combining this with the result from step 1 gives
$$c(\mathcal{T}) + c(\partial\mathcal{T}) = \chi(X) + |\boldsymbol{k}| - 1 + |\boldsymbol{y}| \geq 3\beta_1 + \beta_2 - 2\beta_1 = \beta_1 + \beta_2.$$

In the case of relative trisections, the boundary is an open book. Therefore the arcs $\gamma'' \cap \Sigma_i$ can be homotoped rel boundary to any $\Sigma_j$ via the fibration defining the open book. It follows that $\gamma''$ can be homotoped to lie entirely within any one of the $X_i$. It follows that $|\boldsymbol{k}| \geq 3\beta_1$, and therefore that $c(\mathcal{T}) \geq \beta_1 + \beta_2$.

Finally, for item (4), we have that
\begin{align*}
	c(\mathcal{T}_1\natural \mathcal{T}_2) &= \chi(X_1\natural X_2) + |\boldsymbol{k}_{\mathcal{T}_1 + \mathcal{T}_2}| - 1\\
					       &= \chi(X_1) + \chi(X_2) - 1 + |\boldsymbol{k}_{\mathcal{T}_1}|+|\boldsymbol{k}_{\mathcal{T}_2}| - 1 = c(\mathcal{T}_1) + c(\mathcal{T}_2)
					      \end{align*}
					     as required.
	\end{proof}
	\begin{remark} Proposition \ref{cplx_properties} implies that $c(X_1\natural X_2) \leq c(X_1) + c(X_2)$. It is not known if this inequality is ever strict. This is analogous to the question of whether or not trisection genus is additive under connected sum.
	\end{remark}
\begin{eg}\label{Proof_cplx_examples}
Figure \ref{cplx_table} lists the minimum complexities of some relative trisections and pseudo-trisections. It demonstrates that even for simple 4-manifolds complexities of relative trisections tend to be much greater than the lower bound given in Proposition \ref{cplx_properties}. On the other hand, complexities of pseudo-trisections appear to be equal to $\beta_1 + \beta_2$. This suggests that relative trisections tend to have significantly higher complexity than pseudo-trisections.

The five (minimal) pseudo-trisections in Figure \ref{cplx_table} are presented in Examples \ref{trivialB4}, \ref{B3xS1}, \ref{B2xS2}, and \ref{CP2-B4} and below in Figure \ref{2S2xD2_figure}. Since any pseudo-trisection with complexity $0$ is the trivial pseudo-trisection, it is automatic that the given pseudo-trisections for $S^1\times B^3$ and $S^2\times D^2$ must be minimal. Next, one can see that any pseudo-trisection with complexity $1$ must have boundary a lens space or $S^1\times S^2$. It follows that the given pseudo-trisection for $(S^2\times D^2)\natural (S^2\times D^2)$ is also minimal. Another minimal pseudo-trisection of $(S^2\times D^2)\natural (S^2\times D^2)$ is given by the boundary connected sum of a complexity 1 pseudo-trisection with itself.

	For relative trisections, it is again clear that the trivial trisection of $B^4$ is minimal, and any other trisection has complexity at least $1$. Therefore the relative trisection given in Example \ref{CP2-B4} is minimal. Any relative trisection with complexity $1$ must have boundary $S^3$. It follows that a relative trisection of $S^1\times B^3$ with complexity $2$ would be minimal. Such a relative trisection exists---it is presecribed by the relative trisection diagram consisting of an annulus (and no curves). Moreover, this is the only relative trisection with complexity $2$ and boundary not a sphere, as can be seen by enumerating diagrams.

From here, it follows that any relative trisection of $S^2\times D^2$ with complexity 3 must be minimal. However, the author could not find such an example, or rule out its existence. To the author's best knowledge, the smallest known relative trisection of $S^2\times D^2$ has complexity 4---see Example 2.9 of \cite{CasOzb}.

Finally, one can show again by enumerating diagrams that any relative trisection of $(S^2\times D^2) \natural (S^2\times D^2)$ must have complexity at least $4$. To the author's best knowledge, the smallest known relative trisection of $(S^2\times D^2)\natural (S^2\times D^2)$ has complexity 8, by taking the ``plumbing" of a relative trisection of $S^2\times D^2$ with itself (in the sense of Theorem 3.20 of \cite{CasIslMilTom}). Note that it is not known whether or not relative trisection genus (equivalently, complexity) is additive under boundary connected sum \cite{Additivity}.
\end{eg}

\begin{figure}
\begin{tabular}{|c|c|c|cc|}
\hline
		 & & {Rel.-T.} & \multicolumn{2}{c|}{Pseudo-T.}              \\ \hline
		 & $\beta_1 + \beta_2$ & $c(X)$ & \multicolumn{1}{c|}{$c(X)$} & $c(X, \partial X)$ \\ \hline
	$B^4$            & 0 & 0 & \multicolumn{1}{c|}{0}       & 0         \\ \hline
	$\CP^2 - B^4$    & 1 & 1 & \multicolumn{1}{c|}{1}       & 1         \\ \hline
	$S^1\times B^3$  & 1 & 2 & \multicolumn{1}{c|}{1}       & 3         \\ \hline
	$S^2 \times D^2$ & 1 & [3,4] & \multicolumn{1}{c|}{1}       & 3         \\ \hline
	$(S^2\times D^2) \natural (S^2\times D^2)$ & 2 & [4,8] & \multicolumn{1}{c|}{2} & 5 \\ \hline
\end{tabular}
\caption{Complexities of some simple 4-manifolds. When not known, the interval of possibilities is written.}
\label{cplx_table}
\end{figure}

\begin{eg}\label{cplx_big_gap}For pseudo-trisections, $c(\natural^\ell (S^2\times D^2)) \leq \ell$. This is because we can iteratively take the boundary connected sum of the complexity 1 pseudo-trisection in Example \ref{B2xS2} with itself, and complexity is subadditive. On the other hand, restricting to relative trisections and assuming additivity of relative trisection genus, $c(\natural^\ell (S^2\times D^2)) \geq 3\ell$ by Example \ref{Proof_cplx_examples}.
\end{eg}
\begin{remark}
	In Examples \ref{Proof_cplx_examples} and \ref{cplx_big_gap}, we reference the question of whether or not relative trisection genus is additive under boundary connected sum. Evidence for the additivity of relative trisection genus is presented in a recent preprint of Takahashi \cite{Tak}. Note that additivity of relative trisection genus would imply additivity of (closed) trisection genus, which in turn implies the Smooth Poincar\'e conjecture in dimension 4 \cite{LamMei}.
\end{remark}
	\begin{remark} The author is unaware of manifolds $X$ for which $c(X) < \beta_1 + \beta_2$ or $c(X,\partial X) = \beta_1 + \beta_2, c(\partial X) > 0$. It appears likely that the bound given in Proposition \ref{cplx_properties} for pseudo-trisections can be improved.
	\end{remark}
	\begin{remark} To better measure the extent to which pseudo-trisections have lower complexity than relative trisections, we would need a sufficiently tight \emph{upper bound} on the complexity of pseudo-trisections. This corresponds to further refining existence results in a way which is not done here. 	\end{remark}

\section{Pseudo-trisection diagrams}\label{diagrams}
In this section we introduce \emph{pseudo-trisection diagrams}. These represent pseudo-trisections in the same way that trisection diagrams represent relative trisections. We establish a one-to-one correspondence between pseudo-trisections diagrams (up to diagram moves) and pseudo-trisections (up to stabilisation). We also use pseudo-trisection diagrams to prove Theorem \ref{pseudo_existence_2}, namely that for any 4-manifold with a given trisection of the boundary, there exists a pseudo-trisection of the 4-manifold extending the boundary trisection. Finally we describe some orientation conventions.
\subsection{Pseudo-trisection diagrams}\label{diagramssection}

In this subsection we introduce \emph{pseudo-trisection diagrams}, which are diagrams for pseudo-trisections extended from triple Heegaard diagrams (in the same fashion that trisection diagrams are extended from Heegaard diagrams). A pseudo-trisection diagram consists of several arcs and closed curves on four surfaces, so that restricting to any three surfaces (and correspondingly coloured arcs) produces a triple Heegaard diagram. This is analogous to trisection diagrams consisting of three simultaneous Heegaard diagrams.

\begin{defn}\label{pseudo_tri_diagram_defn} A \emph{$(g, \boldsymbol k, \boldsymbol y, b)$-pseudo-trisection diagram} consists of the data $(\Sigma_C, \Sigma_1, \Sigma_2, \Sigma_3, \boldsymbol\alpha_1, \boldsymbol\alpha_2,$ $\boldsymbol\alpha_3, \boldsymbol\delta_{1}, \boldsymbol\delta_{2}, \boldsymbol\delta_{3})$, with the following constraints.
	\begin{enumerate}
		\item Each $\Sigma_{\circ}$, $\circ \in \{C, 1, 2, 3\}$ is a surface with boundary. Spefically, $\Sigma_C$ has genus $g$ and $b$ boundary components, while $\Sigma_i$ has genus $p_i = \frac{1}{2}(y_{i-1}+y_i - y_{i+1} - b + 1)$ and $b$ boundary components.
		\item There is an identification of the boundaries of all four of the $\Sigma_{\circ}$. In particular, $\Sigma_i \cup \Sigma_{i+1}$ is a closed surface of genus $y_i$, and $\Sigma_C \cup \Sigma_i$ is a closed surface of genus $h_i = g + \frac{1}{2}(y_{i-1} + y_i - y_{i+1} + b - 1)$.
		\item Each $\boldsymbol\delta_i$ is a collection of disjoint neatly embedded arcs and simple closed curves on $\Sigma_i$ and $\Sigma_{i+1}$, which glues to a cut system of $y_i$ curves on $\Sigma_i \cup \Sigma_{i+1}$. Similarly, each $\boldsymbol \alpha_i$ is a collection of disjoint neatly embedded arcs and simple closed curves on $\Sigma_C$ and $\Sigma_i$, which glues to a cut system of $h_i$ curves on $\Sigma_C \cup \Sigma_i$.
		\item The data
	\begin{align*}
		&(\Sigma_C, \Sigma_1, \Sigma_2, \boldsymbol\alpha_1, \boldsymbol\delta_{1}, \boldsymbol\alpha_2)\\
		&(\Sigma_C, \Sigma_2, \Sigma_3, \boldsymbol\alpha_2, \boldsymbol\delta_{2}, \boldsymbol\alpha_3)\\
		&(\Sigma_C, \Sigma_3, \Sigma_1, \boldsymbol\alpha_3, \boldsymbol\delta_{3}, \boldsymbol\alpha_1)
	\end{align*}
	are all \emph{triple Heegaard diagrams}. Each of these three triple Heegaard diagrams encode $\#^{k_i}S^1 \times S^2$ respectively.
	\end{enumerate}
\end{defn}
For notational brevity, we frequently write $(\Sigma_C,\Sigma_i, \boldsymbol\alpha_i,\boldsymbol\delta_i)$ instead of $(\Sigma_C,\Sigma_1, \Sigma_2, \Sigma_3, \boldsymbol\alpha_1,\boldsymbol\alpha_2,\boldsymbol\alpha_3,$ $\boldsymbol\delta_1,\boldsymbol\delta_2,\boldsymbol\delta_3)$. 

\begin{defn}\label{realisation}There is a \emph{realisation map}
	$$\mathcal R: \{(g,\boldsymbol{k};\boldsymbol{y},b)\text{-pseudo-trisection diagrams}\} \to \frac{\{(g,\boldsymbol{k};\boldsymbol{y},b)\text{-pseudo-trisections}\}}{\text{diffeomorphism}}$$ defined as follows:
	\begin{enumerate}
		\item First, glue the underlying surfaces $\Sigma_C, \Sigma_1, \Sigma_2, \Sigma_3$ along their common boundaries to obtain the \emph{2-skeleton} $W$.
		\item Next, each pair of surfaces glues to a closed surface, and there is a corresponding cut system. (For example, $(\Sigma_C, \Sigma_2, \boldsymbol{\alpha}_2)$.) Each cut system describes how to glue in a 3-dimensional 1-handlebody, as in the proof of Proposition \ref{3mantriuptodiffeo}. Gluing each of the six 3-dimensional 1-handlebodies produces the \emph{3-skeleton} $Z$.
		\item By assumption, the diagrams $\mathcal{D}_1 = (\Sigma_C, \Sigma_1, \Sigma_2, \boldsymbol\alpha_1, \boldsymbol\delta_{1}, \boldsymbol\alpha_2)$, $\mathcal{D}_2 = (\Sigma_C, \Sigma_2, \Sigma_3, \boldsymbol\alpha_2, \boldsymbol\delta_{2}, \boldsymbol\alpha_3)$, and $\mathcal{D}_3 = (\Sigma_C, \Sigma_3, \Sigma_1, \boldsymbol\alpha_3, \boldsymbol\delta_{3}, \boldsymbol\alpha_1)$ are each triple Heegaard diagrams of $\#^{k_i}(S^1\times S^2)$. These bound 4-dimensional 1-handlebodies, which are labelled $X_1, X_2$, and $X_3$ respectively, and are precisely the sectors of a pseudo-trisection.
	\end{enumerate}
\end{defn}
\begin{prop}\label{realisation_welldefined}The \emph{realisation map} $\mathcal R$ is well defined in the sense that any two pseudo-trisections constructed by the definition of $\mathcal R$ are indeed diffeomorphic.
\end{prop}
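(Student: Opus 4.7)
The plan is to build a diffeomorphism between any two realisations $X$ and $X'$ of a fixed diagram incrementally, starting from the identity on the 2-skeleton $W = \Sigma_C \cup \Sigma_1 \cup \Sigma_2 \cup \Sigma_3$ and extending outward through the 3-skeleton $Z$ and then through the 4-dimensional sectors $X_i$.

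First, I would observe that $W$ is literally determined by the diagram, since the identifications of boundaries are specified as part of the data; so one may begin with $\id_W$. Second, I would extend $\id_W$ to a diffeomorphism $Z \to Z'$ between the two 3-skeleta. Each of the six 3-handlebodies $H_1, H_2, H_3, Y_1, Y_2, Y_3$ is glued to a pair of surfaces in $W$ along a cut system (one of the $\boldsymbol{\alpha}_i$ for the $H_i$, one of the $\boldsymbol{\delta}_i$ for the $Y_i$). The technique from the proof of Proposition \ref{3mantriuptodiffeo}---iteratively extending the identity across disks and then 3-balls, using contractibility of $\text{Diff}(B^2 \text{ rel } \partial)$ and $\text{Diff}(B^3 \text{ rel } \partial)$---shows that each such gluing is unique up to diffeomorphism rel $W$. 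Since the six 3-handlebodies have pairwise disjoint interiors, these local extensions assemble into a global diffeomorphism $Z \to Z'$ extending $\id_W$.

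Third, I would extend to a diffeomorphism $X \to X'$ respecting the pseudo-trisection structure. By property (4) of Definition \ref{pseudo_tri_diagram_defn}, each sector $X_i$ is diffeomorphic to $\natural^{k_i}(S^1 \times B^3)$, and its boundary---namely the portion of $Z$ lying in $\partial X_i$---is $\#^{k_i}(S^1 \times S^2)$. Restricting the 3-skeletal diffeomorphism to $\partial X_i$ thus yields a self-diffeomorphism of $\#^{k_i}(S^1 \times S^2)$, and by the theorem of Laudenbach-Poenaru this extends to a self-diffeomorphism of $\natural^{k_i}(S^1 \times B^3)$. Applying this in each of the three sectors independently produces the desired diffeomorphism $X \to X'$; by construction it carries sectors to sectors, 3-handlebodies to 3-handlebodies, and surfaces to surfaces, hence is an equivalence of pseudo-trisections.

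The main obstacle will be the third step, which rests on the 4-dimensional extension result of Laudenbach-Poenaru. In dimensions 2 and 3 the analogous gluing ambiguities are controlled uniformly by contractibility of the relevant diffeomorphism groups rel boundary, but no such statement is known for $B^4$; fortunately only the $\pi_0$-level statement that every self-diffeomorphism of $\#^{k_i}(S^1 \times S^2)$ bounds a self-diffeomorphism of $\natural^{k_i}(S^1 \times B^3)$ is needed here, and that is precisely what Laudenbach-Poenaru provides.
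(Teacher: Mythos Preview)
Your proposal is correct and follows essentially the same approach as the paper: begin with the identity on the 2-skeleton, extend over the six 3-handlebodies using the argument of Proposition~\ref{3mantriuptodiffeo} (contractibility of $\mathrm{Diff}(B^n\ \mathrm{rel}\ \partial)$ for $n=2,3$), and then extend over the 4-dimensional sectors by Laudenbach--Po\'enaru. Your closing remark clarifying why only the $\pi_0$-level extension statement is needed in dimension~4 is a helpful addition.
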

\begin{proof}Suppose $X$ and $X'$ are pseudo-trisected 4-manifolds constructed from a given pseudo-trisection diagram by $\mathcal R$. Let $X_i$ and $X_i'$ be the sectors, $W$ and $W'$ the 2-skeleta, and $Z$ and $Z'$ be the 3-skeleta of $X$ and $X'$ respectively. Notice that the 2-skeleta are exactly the union of the surfaces in the pseudo-trisection diagram, so we start with the identity map $W \to W'$. Next, we extend this to a map $Z \to Z'$ by (the proof of) Proposition \ref{3mantriuptodiffeo}. This map restricts to diffeomorphisms $\partial X_i \to \partial X_i'$ as trisected 3-manifolds. Finally, we apply a theorem of Laudenbach and Po\'enaru \cite{LauPoe} which states that diffeomorphisms of $\#k(S^1\times S^2)$ extend to diffeomorphisms of $\natural^k(S^1\times B^3)$. Namely, the diffeomorphisms $\partial X_i \to \partial X_i'$ above each extend to maps $X_i \to X'_i$. These three extensions glue along $Z$ to give a diffeomorphism $X \to X'$.
\end{proof}
\begin{figure}	
	\noindent\hspace{0px}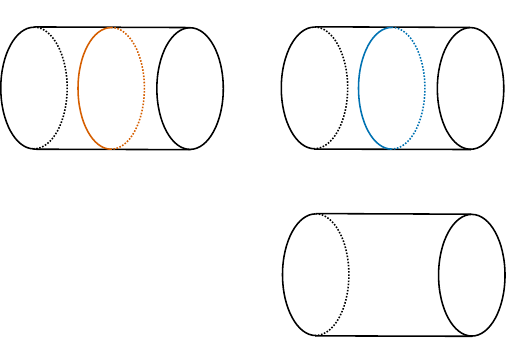
	\caption{A pseudo-trisection diagram of $(S^2\times D^2) \natural (S^2\times D^2)$.}
\label{2S2xD2_figure}
\end{figure}

\begin{eg}Figure \ref{2S2xD2_figure} is an example of a pseudo-trisection diagram, namely of $(S^2\times D^2) \natural (S^2\times D^2)$. Restricting the diagram to $(\Sigma_1, \Sigma_2, \Sigma_3, \delta_1, \delta_2, \delta_3)$ produces exactly the triple Heegaard diagram of $(S^1\times S^2)\# (S^1\times S^2)$ shown in Figure \ref{2S1xS2_figure}. On the other hand, the other triple Heegaard diagrams such as $(\Sigma_C, \Sigma_1, \Sigma_2, \alpha_1, \delta_1, \alpha_2)$ each describe $S^1\times S^2$, so this is a valid pseudo-trisection diagram. One can show that the diagram really does encode $(S^2\times D^2)\natural (S^2\times D^2)$ by applying the realisation map.
\end{eg}
\begin{figure}	
	\noindent\hspace{0px}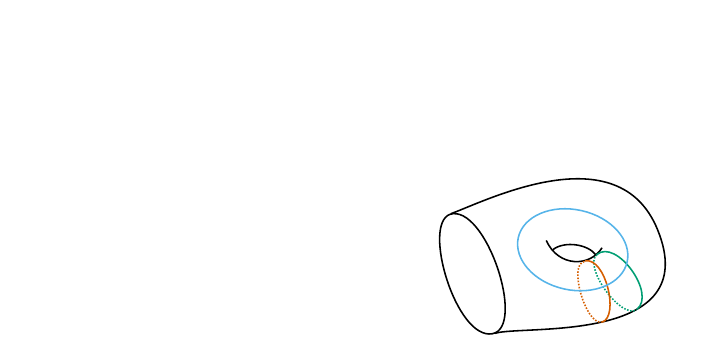
	\caption{Three pseudo-trisection diagrams (arising from three different choices of $\Sigma_1$).}
\label{simple_diagrams_figure}
\end{figure}

\begin{eg}Figure \ref{simple_diagrams_figure} depicts several of the simplest pseudo-trisection diagrams, by using the same choice of $\Sigma_C, \Sigma_2$, and $\Sigma_3$, but switching out $\Sigma_1$ and the associated curves.
	\begin{align*}
		(\Sigma_C, \Sigma_1, \Sigma_2, \Sigma_3, \varnothing, \varnothing, \varnothing, \varnothing, \varnothing, \varnothing) &\longrightarrow B^4,\\
		(\Sigma_C, \Sigma_1', \Sigma_2, \Sigma_3, \alpha_1, \varnothing, \varnothing, \delta_1, \varnothing, \delta_3) &\longrightarrow S^1\times B^3,\\
		(\Sigma_C, \Sigma_1'', \Sigma_2, \Sigma_3, \alpha_1^*, \varnothing, \varnothing, \delta_1, \varnothing, \delta_3) &\longrightarrow S^2\times B^2.
	\end{align*} In fact, these three diagrams are precisely the diagrams giving rise to the pseudo-trisections in Examples \ref{trivialB4}, \ref{B3xS1}, and \ref{B2xS2}.
\end{eg}
\begin{lem}\label{handleslides_isotopies_lem}Fix a pseudo-trisection $\mathcal T$ of $X$. Any two pseudo-trisection diagrams $\mathcal D$ and $\mathcal D'$ with realisation $\mathcal T$ are diffeomorphic after a sequence of handleslides and isotopies. (That is, there are diffeomorphisms $\varphi_\circ : \Sigma_\circ \to \Sigma_\circ'$ rel boundary preserving the arcs on the surfaces.)
\end{lem}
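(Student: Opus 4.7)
The plan is to mimic the proof of Lemma \ref{diagramuniqueness_lem}, applying Johannson's uniqueness of meridian-systems \cite{Joh} to each of the six cut systems that make up a pseudo-trisection diagram, and arguing that these handleslides are mutually non-interfering because they live on disjoint pairs of surfaces or involve disjoint collections of curves.

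First I would identify the underlying surfaces. The topological type of each $\Sigma_\circ$ (for $\circ\in\{C,1,2,3\}$) is determined by $\mathcal T$ via the formulas in Proposition \ref{properties}, so the underlying surfaces of $\mathcal D$ and $\mathcal D'$ have the same genus and number of boundary components. Moreover, the boundary identifications among $\Sigma_C,\Sigma_1,\Sigma_2,\Sigma_3$ are also fixed by $\mathcal T$ (they are the binding together with how the sectors of $\partial\Sigma_i$ match up). I would therefore begin by choosing, for each $\circ$, a diffeomorphism $\varphi_\circ:\Sigma_\circ\to\Sigma_\circ'$ which is the identity on $\partial\Sigma_\circ$. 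After pulling back the curve data on $\mathcal D'$ via these $\varphi_\circ$, we may assume the four underlying surfaces of $\mathcal D$ and $\mathcal D'$ are literally identical, and it remains to show the two sets of curves on these surfaces are related by handleslides and isotopies.

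Next I would handle the cut systems one at a time. A pseudo-trisection diagram carries six cut systems, three of type $\boldsymbol\delta_i$ on $\Sigma_i\cup\Sigma_{i+1}$ (cutting out the boundary handlebodies $Y_i$) and three of type $\boldsymbol\alpha_i$ on $\Sigma_C\cup\Sigma_i$ (cutting out the interior handlebodies $H_i$). For each of these six pairs, $\boldsymbol\delta_i$ and $\boldsymbol\delta_i'$ (respectively $\boldsymbol\alpha_i$ and $\boldsymbol\alpha_i'$) are two meridian-systems on the boundary of the same 3-dimensional handlebody $Y_i$ (respectively $H_i$) of $\mathcal T$. By Corollary 1.6 of \cite{Joh}, as already invoked in Lemma \ref{diagramuniqueness_lem}, any two meridian-systems of a handlebody are related by handleslides and isotopies (of curves on the boundary surface). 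Applying this six times produces the required sequence of moves.

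The potential obstacle is that handleslides and isotopies in one cut system might disturb another. I would dispatch this by noting that each move is defined in a single color on a single pair of surfaces: an $\boldsymbol\alpha_i$-handleslide on $\Sigma_C\cup\Sigma_i$ only moves $\boldsymbol\alpha_i$ (it may well be transverse to $\boldsymbol\alpha_{i\pm1}$ on $\Sigma_C$, or to $\boldsymbol\delta_{i-1},\boldsymbol\delta_i$ on $\Sigma_i$, but those curves are not altered), and likewise a $\boldsymbol\delta_i$-handleslide on $\Sigma_i\cup\Sigma_{i+1}$ only moves $\boldsymbol\delta_i$. Isotopies are similarly color-local; as already noted in Example \ref{handleslide_eg}, such an isotopy may move endpoints of arcs across the common boundary of two $\Sigma_\circ$, but this is permitted and does not affect other cut systems. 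Therefore the six sequences of moves can be performed independently, and together with the initial diffeomorphisms $\varphi_\circ$ they witness the diagrams as equivalent in the desired sense.
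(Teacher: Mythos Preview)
Your proposal is correct and follows essentially the same approach as the paper: apply Johannson's uniqueness of meridian-systems (Corollary 1.6 of \cite{Joh}) to each of the six handlebodies $Y_i$ and $H_i$, observing that a pseudo-trisection diagram is a collection of six overlapping meridian systems. The paper's proof is just a terse version of what you wrote; your additional remarks about first identifying the underlying surfaces and about the color-locality of handleslides are helpful elaborations but not a different method.
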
 
\begin{proof} This essentially follows from the proof of Lemma \ref{diagramuniqueness_lem}. For each sector $Y_i$ or $H_i$ of the trisection, the corresponding \emph{meridian system} (considered up to isotopy) \cite{Joh} is unique up to handleslides. A pseudo-trisection diagram is a collection of six overlapping meridian systems.
\end{proof}
Note that \emph{handleslides} and \emph{isotopies} of pseudo-trisections are identical to those of triple Heegaard diagrams, described in Example \ref{handleslide_eg}.

Next we give diagrammatic descriptions of boundary stabilisation, Heegaard stabilisation, and internal stabilisation. At the diagrammatic level, we find that Heegaard stabilisation and internal stabilisation fit into the same framework, which we call \emph{torus stabilisation}.
\begin{defn}\label{torus_stab_defn}Given a pseudo-trisection diagram, a \emph{torus stabilisation} is a new pseudo-trisection diagram obtained by increasing the genus of one of the $\Sigma_i$ or $\Sigma_C$ by 1, and correspondingly adding three curves as shown in Figure \ref{torus_stab_figure}. There are two types, as follows:
	\begin{itemize}
		\item \emph{Type I.} Let $T^2$ be a torus with three curves, namely two meridians $\mu_1, \mu_2$ and a longitude $\gamma$. For a fixed $j \in \{1,2,3\}$, a \emph{type I torus stabilisation} of $(\Sigma_C, \Sigma_i, \boldsymbol\alpha_i, \boldsymbol\delta_{i})$ is the diagram obtained by replacing $\Sigma_j$ with $\Sigma_j \# T^2$, $\boldsymbol\alpha_j$ with $\boldsymbol\alpha_j \cup \{\mu_1\}$, $\boldsymbol\delta_{j-1}$ with $\boldsymbol\delta_{j-1}\cup \{\mu_2\}$, and $\boldsymbol\delta_j$ with $\boldsymbol\delta_j \cup \{\gamma\}$.
		\item \emph{Type II.} Let $(T^2, \mu_1, \mu_2, \gamma)$ be as above. For a fixed $j \in \{1, 2, 3\}$, a \emph{type II torus stabilisation} of $(\Sigma_C, \Sigma_i, \boldsymbol\alpha_i, \boldsymbol\delta_{i})$ is the diagram obtained by replacing $\Sigma_C$ with $\Sigma_C \# T^2$, $\boldsymbol\alpha_j$ with $\boldsymbol\alpha_j \cup \{\gamma\}$, $\boldsymbol\alpha_{j+1}$ with $\boldsymbol\alpha_{j+1}\cup \{\mu_1\}$, and $\boldsymbol\alpha_{j-1}$ with $\boldsymbol\alpha_{j-1} \cup \{\mu_2\}$.
	\end{itemize}
\end{defn}
One can check that torus stabilisation is well defined by observing that the result of torus stabilisation is a valid pseudo-trisection diagram, and moreover that it is a diagram of the same underlying 4-manifold.

\begin{figure}	
	\noindent\hspace{80px}
\begingroup%
  \makeatletter%
  \providecommand\color[2][]{%
    \errmessage{(Inkscape) Color is used for the text in Inkscape, but the package 'color.sty' is not loaded}%
    \renewcommand\color[2][]{}%
  }%
  \providecommand\transparent[1]{%
    \errmessage{(Inkscape) Transparency is used (non-zero) for the text in Inkscape, but the package 'transparent.sty' is not loaded}%
    \renewcommand\transparent[1]{}%
  }%
  \providecommand\rotatebox[2]{#2}%
  \newcommand*\fsize{\dimexpr\f@size pt\relax}%
  \newcommand*\lineheight[1]{\fontsize{\fsize}{#1\fsize}\selectfont}%
  \ifx\svgwidth\undefined%
    \setlength{\unitlength}{440.8849205bp}%
    \ifx\svgscale\undefined%
      \relax%
    \else%
      \setlength{\unitlength}{\unitlength * \real{\svgscale}}%
    \fi%
  \else%
    \setlength{\unitlength}{\svgwidth}%
  \fi%
  \global\let\svgwidth\undefined%
  \global\let\svgscale\undefined%
  \makeatother%
  \begin{picture}(1,0.20070621)%
    \lineheight{1}%
    \setlength\tabcolsep{0pt}%
    \put(0,0){\includegraphics[width=\unitlength,page=1]{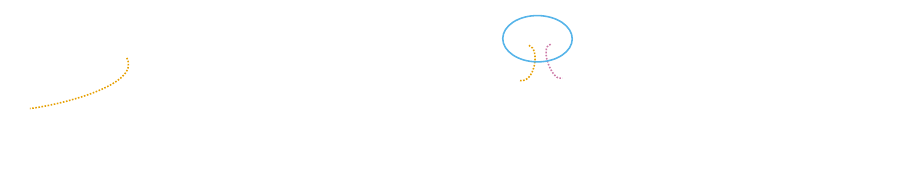}}%
    \put(0.04646436,0.00376552){\makebox(0,0)[lt]{\lineheight{1.25}\smash{\begin{tabular}[t]{l}$(\Sigma_C, \boldsymbol{\alpha}_i \cap \Sigma_C)$\end{tabular}}}}%
    \put(0,0){\includegraphics[width=\unitlength,page=2]{torus_stabilisation.pdf}}%
    \put(0.31806883,0.00498379){\makebox(0,0)[lt]{\lineheight{1.25}\smash{\begin{tabular}[t]{l}$(\Sigma_C, \boldsymbol{\alpha}_i \cap \Sigma_C) \# (T^2, \gamma, \mu_1, \mu_2)$\end{tabular}}}}%
  \end{picture}%
\endgroup%

	\caption{An example of a torus stabilisation (which is type II with the given labels). See Definition \ref{torus_stab_defn}.}
\label{torus_stab_figure}
\end{figure}

\begin{prop}\label{torus_stab_equiv}Torus stabilisations of pseudo-trisection diagrams correspond to Heegaard stabilisations and internal stabilisations of pseudo-trisections. More precisely, the following hold:
	\begin{enumerate}
		\item Let $\mathcal D$ be a pseudo-trisection diagram with realisation $\mathcal T$. Let $\mathcal D'$ be a type I torus stabilisation of $\mathcal D$. Then the realisation of $\mathcal D'$ is a Heegaard stabilisation of $\mathcal T$.
		\item Let $\mathcal T'$ be a Heegaard stabilisation of $\mathcal T$. Then there is a pseudo-trisection diagram $\mathcal D$ such that its realisation is $\mathcal T$, and moreover a type I torus stabilisation of $\mathcal D$ has realisation $\mathcal T'$.
		\item Let $\mathcal D$ be a pseudo-trisection diagram with realisation $\mathcal T$. Let $\mathcal D'$ be a type II torus stabilisation of $\mathcal D$. Then the realisation of $\mathcal D'$ is an internal stabilisation of $\mathcal T$.
		\item Let $\mathcal T'$ be an internal stabilisation of $\mathcal T$. Then there is a pseudo-trisection diagram $\mathcal D$ such that its realisation is $\mathcal T$, and moreover a type II torus stabilisation of $\mathcal D$ has realisation $\mathcal T'$.
	\end{enumerate}
\end{prop}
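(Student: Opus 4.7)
The plan is to analyse each of the four claims via the realisation map $\mathcal{R}$ of Definition \ref{realisation}. The core computation is to trace how the three curves added in a torus stabilisation appear in the three triple Heegaard diagrams $\mathcal{D}_1, \mathcal{D}_2, \mathcal{D}_3$ that describe the sector boundaries $\partial X_i$, and to read off from each $\mathcal{D}_i'$ how $X_i$ changes topologically via the Laudenbach-Po\'enaru theorem.

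For the forward direction (1), consider a type I torus stabilisation $\mathcal{D}'$ of $\mathcal{D}$ with index $j$, attaching $(T^2, \mu_1, \mu_2, \gamma)$ to $\Sigma_j$. In $\mathcal{D}_{j-1}' = (\Sigma_C, \Sigma_{j-1}, \Sigma_j \# T^2, \boldsymbol{\alpha}_{j-1}, \boldsymbol{\delta}_{j-1}\cup \{\mu_2\}, \boldsymbol{\alpha}_j \cup \{\mu_1\})$, the two new curves are parallel meridians of $T^2$ living in different cut systems; this parallel pair produces a non-separating sphere in the realised 3-manifold, adding an $S^1 \times S^2$ summand to $\partial X_{j-1}$ and hence a 1-handle to $X_{j-1}$. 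In $\mathcal{D}_j'$ the added curves are $\mu_1$ and $\gamma$, which intersect once on $T^2$; these trivially fill in the new handle, leaving $\partial X_j$ (and hence $X_j$) unchanged. And $\mathcal{D}_{j+1}'$ is identical to $\mathcal{D}_{j+1}$, so $X_{j+1}$ is unchanged. Restricting $\mathcal{D}'$ to $(\Sigma_i, \boldsymbol{\delta}_i)$ yields exactly the Heegaard-stabilised triple Heegaard diagram of Example \ref{heegaardstabilisationdiagram}, so the induced trisection on $\partial X$ is also Heegaard stabilised. Together, these identify $\mathcal{R}(\mathcal{D}')$ with a Heegaard stabilisation of $\mathcal{T}$ at index $j$. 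An analogous computation handles (3): for a type II torus stabilisation with index $j$, both $\mathcal{D}_j'$ and $\mathcal{D}_{j-1}'$ acquire a meridian-longitude pair on the new $T^2 \subset \Sigma_C \# T^2$, while $\mathcal{D}_{j+1}'$ acquires two parallel meridians; hence $X_{j+1}$ gains a 1-handle, $X_j, X_{j-1}$ are unchanged, and $\partial X$ is unchanged, matching internal stabilisation at index $j$.

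For the backward directions (2) and (4), the strategy is: given the target stabilisation $\mathcal{T}'$ of $\mathcal{T}$ along an arc $\alpha$, choose any pseudo-trisection diagram $\mathcal{D}$ for $\mathcal{T}$ and use Lemma \ref{handleslides_isotopies_lem} to adjust $\mathcal{D}$ by handleslides and isotopies so that a chosen basepoint on the appropriate surface ($\Sigma_j$ for type I, $\Sigma_C$ for type II) lies in the model neighbourhood of $\alpha$. Then attach $(T^2, \mu_1, \mu_2, \gamma)$ at that basepoint with the appropriate prescription. The forward direction already computed then identifies the realisation as $\mathcal{T}'$.

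The main obstacle is the local-to-global identification: one needs to verify that the local torus model really assembles to the stabilised 4-manifold rather than some other manifold with the same sector genera. This is handled by Proposition \ref{realisation_welldefined}, which ensures the realisation is well-defined up to diffeomorphism, combined with a direct identification of the standard 4-ball containing the new torus handle with the standard 4-ball neighbourhood $N(\alpha)$ used to define the 4-dimensional stabilisation. A secondary subtlety is the role of Lemma \ref{handleslides_isotopies_lem} in the backward direction: a priori the arc $\alpha$ chosen for the stabilisation need not be visible in a given diagram $\mathcal{D}$, so we need the freedom to handleslide and isotope $\mathcal{D}$ until the local model appears in the form prescribed by Definition \ref{torus_stab_defn}.
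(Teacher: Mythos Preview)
Your proposal is correct and follows essentially the same line as the paper's proof. Both arguments trace the realisation map through the added $(T^2, \mu_1, \mu_2, \gamma)$; you organise this by examining each sector-boundary triple Heegaard diagram $\mathcal{D}_i'$ and reading off the change in $\partial X_i$, while the paper phrases the same computation at the level of the three 3-dimensional handlebodies $H_j, Y_{j-1}, Y_j$ that each gain a handle from $\Sigma_j \# T^2$, and which pairs of those handles cancel versus stay parallel. These are equivalent viewpoints on the same local calculation.

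The one place where the paper is slightly more direct is in (2): rather than positioning an abstract basepoint via Lemma~\ref{handleslides_isotopies_lem} and then invoking the forward direction, the paper writes down the new curves concretely from the geometry of the stabilising arc $\alpha$ --- take $\gamma$ to be the union of a push-off of $\alpha$ onto $\partial N(\alpha)$ with the track of an isotopy of $\alpha$ into $\Sigma_j$, and take $\mu_1, \mu_2$ to be meridians of $N(\alpha)$ --- and then observes that after handleslides and isotopies of $\gamma$ this is exactly a type I torus stabilisation. Your route works as well, but note that your final sentence ``the forward direction already computed then identifies the realisation as $\mathcal{T}'$'' is implicitly using that any two Heegaard stabilisations of $\mathcal{T}$ at the same index $j$ are isotopic (which holds because $\alpha$ is boundary-parallel); it would strengthen your argument to make that step explicit rather than leave it hidden in the basepoint-positioning.
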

\begin{proof}Item (1) is a direct consequence of the definition of realisation. Specifically, the connected sum of some $\Sigma_j$ with a torus results in the 3-dimensional 1-handlebodies $H_j, Y_{j-1}$, and $Y_j$ each gaining a handle $h_{H_j}, h_{Y_{j-1}}$, and $h_{Y_j}$ respectively. The curves added in the type I torus stabilisation ensure that $h_{H_j}$ and $h_{Y_{j-1}}$ are parallel, while the other two pairs cancel. Gluing $H_j \cup h_{H_j}, Y_{j-1} \cup h_{Y_{j-1}}$, and $H_{j-1}$, we obtain a 3-manifold differing from $\partial X_{j-1}$ by a connected sum with $S^1\times S^2$. This corresponds to the boundary of the handle attached in Heegaard stabilisation.

	To prove item (2) one can choose any pseudo-trisection diagram $\mathcal D$ of $\mathcal T$. The Heegaard stabilisation involves an arc $\alpha$ in some $Y_j$ with endpoints $p, q$ in $\Sigma_j$, with neighbourhood $N(\alpha) \subset Y_j$. Let $\gamma_1$ be the arc obtained by pushing $\alpha$ to the boundary of $N(\alpha)$, and $\gamma_2$ the arc obtained by isotoping $\alpha$ onto $\Sigma_j$ and pushing the endpoints to agree with $\gamma_1$. Let $\gamma$ be the closed curve $\gamma_1 \cup \gamma_2$. Next, let $\mu_1$ and $\mu_2$ be meridians of $N(\alpha)$. Create the diagram $\mathcal D'$ by a surgery replacing disks around $p$ and $q$ with an annulus corresponding to the boundary of $N(\alpha)$, and add new curves $\gamma, \mu_1, \mu_2$ to the diagram as in the definition of type I torus stabilisations. After handleslides and isotopies of $\gamma$, the resulting diagram is a type I torus stabilisation of $\mathcal D$ which has realisation $\mathcal T'$.

	Items (3) and (4) are analogous to (1) and (2) and left to the reader.
\end{proof}

\begin{defn}\label{band_stab_defn}Given a pseudo-trisection diagram, a \emph{band stabilisation} is a new pseudo-trisection diagram obtained by the following procedure:
	\begin{enumerate}
		\item Identify a non-separating neatly embedded arc $\alpha$ in some $\Sigma_i$. Thicken the arc to a band in $\Sigma_i$.
		\item New surfaces $\Sigma_i', \Sigma_{i+1}', \Sigma_{i+2}', \Sigma_C'$ are obtained by subtracting the band in $\Sigma_i'$, and attaching the band to the other three surfaces.
		\item The preexisting curves on the surfaces are cut and pasted if they intersect the band, as shown by the change from $\delta_1$ to $\delta_1'$ in Figure \ref{band_stabilisation_figure}. Three new curves are added, one to each of $\boldsymbol{\delta}_{i+1}, \boldsymbol{\alpha}_{i+1}$, and $\boldsymbol{\alpha}_{i+2}$. Each curve consists of two arcs in the diagram, one in each corresponding surface. Each arc is a cocore of the attached band. See Figure \ref{band_stabilisation_figure} for a depiction of the case when $i = 1$.
	\end{enumerate}
\end{defn}

\begin{figure}	
	\noindent\hspace{20px}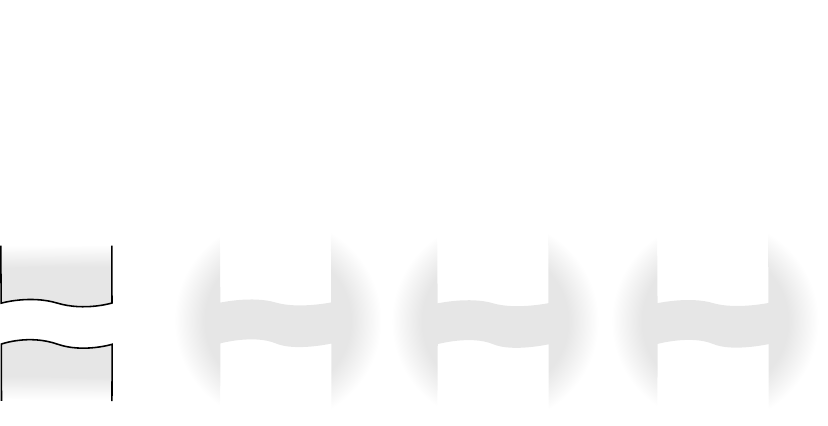
	\caption{An example of a band stabilisation. See Definition \ref{band_stab_defn}.}
\label{band_stabilisation_figure}
\end{figure}

One can check that band stabilisation is well defined by observing that the result is a pseudo-trisection diagram, and moreover that it is a diagram of the same underlying 4-manifold.
\begin{prop}\label{band_stab_equiv}Band stabilisations of pseudo-trisection diagrams correspond to boundary stabilisations of pseudo-trisections. More precisely, the following hold:
	\begin{enumerate}
		\item Let $\mathcal D$ be a pseudo-trisection diagram with realisation $\mathcal T$. Let $\mathcal D'$ be a band stabilisation of $\mathcal D$. Then the realisation of $\mathcal D'$ is a boundary stabilisation of $\mathcal T$.
		\item Let $\mathcal T'$ be a boundary stabilisation of $\mathcal T$. Then there is a pseudo-trisection diagram $\mathcal D$ such that its realisation is $\mathcal T$, and moreover a band stabilisation of $\mathcal D$ has realisation $\mathcal T'$.
	\end{enumerate}
\end{prop}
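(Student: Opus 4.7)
The plan is to mirror the structure of the proof of Proposition \ref{torus_stab_equiv}, analysing the realisation map $\mathcal R$ carefully on each of the $2$-skeleton, $3$-skeleton, and $4$-dimensional sectors, and then reducing item (2) to item (1) by choosing any initial diagram.

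For item (1), let $\mathcal D$ be a pseudo-trisection diagram with realisation $\mathcal T$, and let $\mathcal D'$ be its band stabilisation along a non-separating arc $\alpha \subset \Sigma_i$. The arc $\alpha$ lifts to a neatly embedded non-separating arc on the boundary surface $\Sigma_i \subset \partial X$, so we may also boundary stabilise $\mathcal T$ along $\alpha$ with tubular neighbourhood $N(\alpha) \subset X$. I would trace through the realisation map layer by layer. At the level of the $2$-skeleton, $\mathcal R(\mathcal D')$ is obtained from $\mathcal R(\mathcal D)$ by subtracting a band from $\Sigma_i$ and attaching bands to $\Sigma_{i+1}$, $\Sigma_{i+2}$, and $\Sigma_C$; these four bands are precisely the intersections of $N(\alpha)$ with the corresponding surfaces after the transfer of $N(\alpha)$ to $X_{i+1}$. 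At the level of the $3$-skeleton, the three new curves added to $\boldsymbol{\delta}_{i+1}$, $\boldsymbol{\alpha}_{i+1}$, and $\boldsymbol{\alpha}_{i+2}$ are exactly the meridians required so that the handlebodies $Y_{i+1}$, $H_{i+1}$, and $H_{i+2}$ acquire the new $1$-handles forced by the transfer. Finally, the triple Heegaard diagram for $\partial X_{i+1}$ changes from one encoding $\#^{k_{i+1}}(S^1\times S^2)$ to one encoding $\#^{k_{i+1}+1}(S^1\times S^2)$, matching the attachment of a single $1$-handle to $X_{i+1}$. Proposition \ref{realisation_welldefined} then guarantees that $\mathcal R(\mathcal D')$ is diffeomorphic to the boundary stabilisation of $\mathcal T$.

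For item (2), let $\mathcal T'$ be a boundary stabilisation of $\mathcal T$ along an arc $\alpha \subset \Sigma_i$. Every pseudo-trisection admits a pseudo-trisection diagram (by reading off meridian systems on each of the six surface pairs $\Sigma_C \cup \Sigma_i$ and $\Sigma_i \cup \Sigma_{i+1}$, analogously to Proposition \ref{3mantriuptodiffeo}), so we may choose a diagram $\mathcal D_0$ with $\mathcal R(\mathcal D_0) = \mathcal T$. After handleslides and an isotopy of $\alpha$ using Lemma \ref{handleslides_isotopies_lem}, we may assume $\alpha$ is a non-separating arc in the diagram surface $\Sigma_i$ of $\mathcal D_0$, disjoint from the preexisting curves. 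Applying part (1) to the band stabilisation of $\mathcal D_0$ along $\alpha$ then yields a diagram whose realisation is the boundary stabilisation of $\mathcal T$, which is $\mathcal T'$.

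The main obstacle is the geometric identification in item (1): the three new diagram curves of Definition \ref{band_stab_defn} must be shown to bound compression disks for the new $1$-handle portions of $Y_{i+1}$, $H_{i+1}$, and $H_{i+2}$. The cleanest argument proceeds from a standard local model for $N(\alpha)$ as a $4$-ball with a canonical local decomposition induced by the pseudo-trisection near $\alpha \subset \Sigma_i \subset \partial X$, in which the newly attached bands and their cocores are manifestly the relevant meridian disks. The remaining bookkeeping of genera, cut-system sizes, and Euler characteristics is then routine and consistent with Proposition \ref{properties} and the index-change observations listed after the definition of boundary stabilisation.
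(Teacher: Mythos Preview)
Your proposal is correct and takes essentially the same approach as the paper: the paper's proof simply states that the argument is analogous to Proposition~\ref{torus_stab_equiv} and that one equates the arc $\alpha$ in the definition of band stabilisation with the arc used in boundary stabilisation, which is precisely what you do in expanded detail. One minor remark: in item~(2) you arrange that $\alpha$ be disjoint from the preexisting curves, but this is not strictly necessary since Definition~\ref{band_stab_defn} already handles transverse intersections via the cut-and-paste step.
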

\begin{proof}The proof is similar to the proof of Proposition \ref{torus_stab_equiv}. The equivalence is established by viewing the surfaces of the diagram as the 2-skeleton of the pseudo-trisection, and equating the arc $\alpha$ in the definition of band stabilisation with the arc used in the definition of boundary stabilisation.
\end{proof}
\begin{thm2}The \emph{realisation map} $$\mathcal R : \{\text{pseudo-trisection diagrams}\} \to \frac{\{\text{pseudo-trisections}\}}{\text{diffeomorphism}}$$ induces a bijection
	$$\frac{\Big\{\text{pseudo-trisection diagrams}\Big\}}{\begin{matrix}\text{band and torus stabilisation,}\\ \text{handleslide, isotopy}\end{matrix}} \longrightarrow \frac{\Big\{\begin{matrix}\text{compact oriented 4-manifolds}\\ \text{with one boundary component}\end{matrix}\Big\}}{\text{diffeomorphism}}.$$	

\end{thm2}
\begin{proof}This proof is analogous to that of Proposition \ref{diagramuniqueness}. By Lemma \ref{handleslides_isotopies_lem}, any two pseudo-trisection diagrams of a fixed pseudo-trisection are diffeomorphic after a sequence of handleslides and isotopies. By Theorem \ref{dim4uniqueness}, any two pseudo-trisections of a given 4-manifold are equivalent up to boundary, Heegaard, and internal stabilisation. By Propositions \ref{band_stab_equiv} and \ref{torus_stab_equiv}, band and torus stabilisations correspond to boundary, Heegaard, and internal stabilisations.
\end{proof}

\subsection{Boundary stabilisation shift and relative existence}
\emph{Boundary stabilisation} of pseudo-trisections is an extension of \emph{stabilisation} of 3-manifold trisections to the entire 4-manifold. In this subsection we show that there is a reverse operation---destabilisation of the boundary extends to the 4-manifold as well. We use this operation to establish that any trisection of the boundary 3-manifold of $X$ extends to a pseudo-trisection of $X$. This mirrors the existence result for relative trisections \cite{GayKir}.

\begin{defn}\label{bdy_stab_shift_defn}\emph{Boundary stabilisation shift} is an operation on pseudo-trisections with stabilised boundary defined diagrammatically in Figure \ref{bdy_stabilisation_shift_figure} which can be thought of as exchanging a stabilisation of the boundary for an internal stabilisation. The details are as follows:
	\begin{enumerate} 
		\item Let $X$ be equipped with a pseudo-trisection $\mathcal T$ such that $\mathcal T|_{\partial X}$ is a stabilisation of a trisection $\tau'$ of $\partial X$.  
		\item Let $(\Sigma_C, \Sigma_i, \boldsymbol{\alpha}_i, \boldsymbol{\delta}_i)$ be a pseudo-trisection diagram of $\mathcal T$ realising the stabilisation of the boundary. (This is the top row of Figure \ref{bdy_stabilisation_shift_figure}.) Locally, each of $\Sigma_1, \Sigma_2$, and $\Sigma_3$ are as depicted by shading, but nothing can be said about how $\Sigma_C$ meets its boundary.
		\item The \emph{boundary stabilisation shift} of $\mathcal T$ is the pseudo-trisection given by the diagram in the bottom row of Figure \ref{bdy_stabilisation_shift_figure}. This is obtained by band surgery on each of $\Sigma_i$ and $\Sigma_C$: a band is glued to obtain each of $\Sigma_1'$ and $\Sigma_C'$, with a corresponding meridian in $\boldsymbol{\alpha}_1$, and bands are deleted to obtain $\Sigma_2'$ and $\Sigma_3'$.
	\end{enumerate}
\end{defn}

\begin{figure}	
	\noindent\hspace{47px}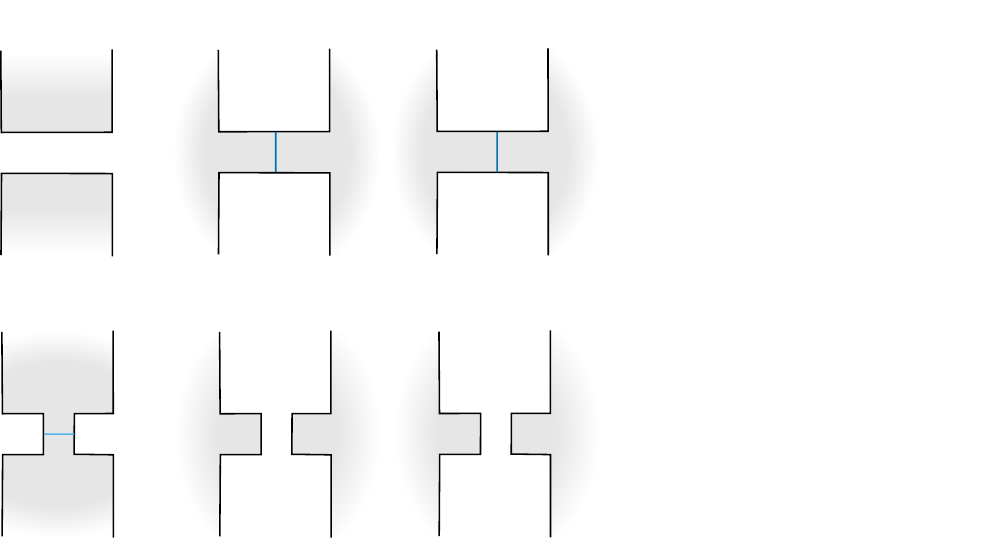
	\caption{A diagrammatic definition of boundary stabilisation shift, see Definition \ref{bdy_stab_shift_defn}.}
\label{bdy_stabilisation_shift_figure}
\end{figure}

\begin{lem}\label{bdy_destab_well_defined}\emph{Boundary stabilisation shift} is well defined.
\end{lem}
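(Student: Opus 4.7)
The goal is to show that the operation of Definition \ref{bdy_stab_shift_defn} outputs a pseudo-trisection of the original 4-manifold $X$ with boundary trisection $\tau'$, and that this output depends only on $\mathcal{T}$ (and the choice of which boundary-stabilisation arc to undo), not on the auxiliary pseudo-trisection diagram used to represent $\mathcal{T}$. The plan has three steps.

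First, I would verify that the output of Figure \ref{bdy_stabilisation_shift_figure} is a valid pseudo-trisection diagram in the sense of Definition \ref{pseudo_tri_diagram_defn}. Since the band surgeries are entirely local, this reduces to a check inside the depicted box; outside the box nothing changes. Direct inspection shows that each pair $(\Sigma_i' \cup \Sigma_{i+1}', \boldsymbol{\delta}_i')$ and $(\Sigma_C' \cup \Sigma_i', \boldsymbol{\alpha}_i')$ remains a cut system of the appropriate genus after surgery, and that each of the three induced triple Heegaard subdiagrams still encodes $\#^{k_i'}(S^1 \times S^2)$ for the updated indices (only $k_1$ changes, increasing by one to reflect the new internal stabilisation).

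Second, I would show that the realisation of the new diagram is diffeomorphic to $X$ and that its boundary is $\tau'$. The cleanest route is to factor boundary stabilisation shift as the composition of two diagrammatic moves already shown to preserve the 4-manifold. The top row of Figure \ref{bdy_stabilisation_shift_figure} can be obtained from a simpler intermediate diagram $\mathcal{D}^\circ$ by a band stabilisation in the sense of Definition \ref{band_stab_defn}, so by Proposition \ref{band_stab_equiv} the realisation of $\mathcal{D}^\circ$ is a pseudo-trisection of the same 4-manifold $X$ whose boundary trisection is exactly $\tau'$. Applying a type II torus stabilisation to $\mathcal{D}^\circ$ then produces the bottom row of Figure \ref{bdy_stabilisation_shift_figure}, and by Proposition \ref{torus_stab_equiv} this corresponds to an internal stabilisation, which leaves $X$ and its boundary trisection unchanged. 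Hence the realisation of the bottom diagram is a pseudo-trisection of $X$ whose boundary trisection is $\tau'$, as required.

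Third, I would address independence of the chosen diagram for $\mathcal{T}$. By Lemma \ref{handleslides_isotopies_lem}, any two pseudo-trisection diagrams of $\mathcal{T}$ differ by handleslides and isotopies. The local model inside the box of Figure \ref{bdy_stabilisation_shift_figure} is forced by the hypothesis that $\mathcal{T}|_{\partial X}$ is a stabilisation of $\tau'$, so after preliminary isotopies one may arrange for any handleslides and isotopies comparing two diagrams to act trivially on this standard chart. The shift is local, so it commutes with such moves, giving a well-defined output up to diffeomorphism. I expect this last step to be the main obstacle: verifying that handleslides and isotopies really can be pushed off the boundary-stabilisation chart requires a careful analysis of how curves may traverse it. A cleaner workaround is to appeal to the factorisation of the second step, observing that band destabilisation (as the inverse of Definition \ref{band_stab_defn}) and internal stabilisation are intrinsic operations on pseudo-trisections rather than on diagrams, so their composition is intrinsic as well.
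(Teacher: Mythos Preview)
Your factorisation in Step 2 assumes more than the definition gives you. The hypothesis in Definition \ref{bdy_stab_shift_defn} is only that the \emph{boundary trisection} $\mathcal T|_{\partial X}$ is a stabilisation of some $\tau'$, not that $\mathcal T$ itself is a boundary stabilisation of a pseudo-trisection. The definition explicitly says ``nothing can be said about how $\Sigma_C$ meets its boundary'' in the local picture. Hence there is in general no intermediate diagram $\mathcal D^\circ$ from which the top row arises by band stabilisation: band stabilisation forces a very specific structure on $\Sigma_C$ near the band (it must contain a band carrying cocore arcs of $\boldsymbol{\alpha}_{i+1}$ and $\boldsymbol{\alpha}_{i+2}$), and a generic $\Sigma_C$ will not look like this. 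The paper treats exactly the special case you are invoking as a \emph{separate} result, Proposition \ref{stab_shift_prop}(3), under the extra hypothesis that $\mathcal T$ is a boundary stabilisation of some $\mathcal T^*$.

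A related error appears in Step 1: you assert that $k_1$ increases by one. In fact all three $k_i$ are unchanged under the shift (this is used in Proposition \ref{stab_shift_prop}(2)). The paper's proof is a direct local verification: after the band surgeries, only the surface-pairs $\Sigma_2'\cup\Sigma_3'$ and $\Sigma_1'\cup\Sigma_C'$ have changed, and one checks $\boldsymbol{\delta}_2\setminus\{\gamma\}$ and $\boldsymbol{\alpha}_1\cup\{\mu\}$ are cut systems there; then each of the three triple Heegaard subdiagrams is identified as either a (3-manifold) stabilisation or a destabilisation of the corresponding original subdiagram, hence still encodes $\#^{k_i}(S^1\times S^2)$ with the \emph{same} $k_i$. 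Your Step 3 addresses diagram-independence, which the paper does not treat as part of ``well defined'' here; the paper's interpretation is simply that the output is a valid pseudo-trisection diagram.
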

\begin{proof} Referring back to Definition \ref{pseudo_tri_diagram_defn}, being well defined requires that each $\boldsymbol{\delta}_i$ and $\boldsymbol{\alpha}_i$ must be a cut system of its corresponding surface, and that each of the triple Heegaard diagrams induced on the three triples $(\Sigma_C', \Sigma_i', \Sigma_{i+1}')$ are specifically diagrams of connected sums of $S^1\times S^2$.

	For the first condition, observe that the only surface-pairs whose unions have changed in the operation are $\Sigma_2' \cup \Sigma_3'$ and $\Sigma_1' \cup \Sigma_C'$. In the former, the genus is decreased by a surgery along a neighbourhood of $\gamma$, so $\boldsymbol{\delta}_2 - \{\gamma\}$ is a cut system of $\Sigma_2' \cup \Sigma_3'$. In the latter, the genus is increased and $\mu$ is the corresponding meridian, so $\boldsymbol{\alpha}_1 \cup \{\mu\}$ is a cut system.

	For the second condition, we observe that $(\Sigma_C', \Sigma_1', \Sigma_2', \boldsymbol{\alpha}_1 \cup \{\mu\}, \boldsymbol{\delta}_1, \boldsymbol{\alpha}_2)$ is a stabilisation of $(\Sigma_C, \Sigma_1, \Sigma_2, \boldsymbol{\alpha}_1, \boldsymbol{\delta}_1, \boldsymbol{\alpha}_2)$. The latter is necessarily a diagram of $\#^{k_1}(S^1\times S^2)$ because our operation is on a valid pseudo-trisection diagram. Consequently, $(\Sigma_C', \Sigma_1', \Sigma_2', \boldsymbol{\alpha}_1\cup \{\mu\}, \boldsymbol{\delta}_1, \boldsymbol{\alpha}_2)$ is a diagram of $\#^{k_1}(S^1\times S^2)$. Similarly, $(\Sigma_C', \Sigma_3', \Sigma_1', \boldsymbol{\alpha}_3, \boldsymbol{\delta}_3, \boldsymbol{\alpha}_1 \cup \{\mu\})$ is a diagram of $\#^{k_3}(S^1\times S^2)$. The remaining check is that $(\Sigma_C', \Sigma_2', \Sigma_3', \boldsymbol{\alpha}_2, \boldsymbol{\delta}_2 \cup \{\gamma\}, \boldsymbol{\alpha}_3)$ is locally a destabilisation of $(\Sigma_C, \Sigma_2, \Sigma_3, \boldsymbol{\alpha}_2, \boldsymbol{\delta}_2, \boldsymbol{\alpha}_3)$, so it is a diagram of $\#^{k_2}(S^1\times S^2)$.
\end{proof}

\begin{prop}\label{stab_shift_prop}Boundary stabilisation shift of $\mathcal T$ to $\mathcal T'$ satisfies the following properties:
	\begin{enumerate}
		\item $\mathcal T|_{\partial X}$ is a stabilisation of $\mathcal T'|_{\partial X}$ (in the sense of 3-manifold trisections).
		\item Complexity of the pseudo-trisection is invariant under boundary stabilisation shift, while complexity of the boundary decreases by 1. 

		\item Suppose $\mathcal T$ is a boundary stabilisation of a pseudo-trisection $\mathcal T^*$. Then $\mathcal T'$ is an internal stabilisation of $\mathcal T^*$.
		\end{enumerate}
\end{prop}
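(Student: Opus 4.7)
The plan is to prove the three items in turn.

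For item (1), I restrict the diagrammatic shift in Figure \ref{bdy_stabilisation_shift_figure} to the boundary data $(\Sigma_1,\Sigma_2,\Sigma_3,\boldsymbol\delta_1,\boldsymbol\delta_2,\boldsymbol\delta_3)$, ignoring $\Sigma_C$ and the $\boldsymbol\alpha_i$. The transformation on this sub-diagram exactly reverses the stabilisation pattern of Example \ref{stabilisationdiagram}: the bands on $\Sigma_2,\Sigma_3$ are deleted, the band on $\Sigma_1$ is restored, and the longitudinal curve $\gamma\in\boldsymbol\delta_2$ is removed. Hence $\mathcal T|_{\partial X}$ is a stabilisation of $\mathcal T'|_{\partial X}$ in the sense of Definition \ref{3manstab}.

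For item (2), first note that by item (1) the boundary complexity $c(\partial\mathcal T)=|\boldsymbol y|$ drops by $1$. For the complexity of $\mathcal T$ itself I will give two approaches. The intrinsic approach: since Lemma \ref{bdy_destab_well_defined} ensures the shift outputs another pseudo-trisection of the same 4-manifold $X$, $\chi(X)$ is invariant, and from $c(\mathcal T)=\chi(X)+|\boldsymbol k|-1$ invariance of $c(\mathcal T)$ reduces to invariance of $|\boldsymbol k|$. Each $k_j$ is determined by the triple Heegaard diagram $(\Sigma_C,\Sigma_j,\Sigma_{j+1},\boldsymbol\alpha_j,\boldsymbol\delta_j,\boldsymbol\alpha_{j+1})$ of $\partial X_j=\#^{k_j}(S^1\times S^2)$, and the shift alters each such sub-diagram only by moves that preserve the underlying 3-manifold (a stabilisation paired with its new meridian on one side, a destabilisation on the other). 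Alternatively, I can compute $\Delta c(\mathcal T)$ directly from Proposition \ref{cplx_properties}(1) using the local changes $g\mapsto g+1$, $|\boldsymbol y|\mapsto|\boldsymbol y|-1$, and $b\mapsto b-1$ read from Figure \ref{bdy_stabilisation_shift_figure}: this gives $\Delta c(\mathcal T)=1+\tfrac{1}{2}(-1-1)=0$.

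For item (3), I work at the diagrammatic level. Pick any diagram $\mathcal D^*$ realising $\mathcal T^*$; by Proposition \ref{band_stab_equiv}(2) there is a band stabilisation of $\mathcal D^*$ yielding a diagram $\mathcal D$ realising $\mathcal T$, and applying the shift produces a diagram $\mathcal D'$ realising $\mathcal T'$. Tracking the two moves: the band stabilisation removes a band from $\Sigma_1^*$ and attaches bands to $\Sigma_2^*,\Sigma_3^*,\Sigma_C^*$, adding curves to $\boldsymbol\delta_2,\boldsymbol\alpha_2,\boldsymbol\alpha_3$; the shift then restores the band on $\Sigma_1$, removes the bands on $\Sigma_2,\Sigma_3$, deletes the $\boldsymbol\delta_2$ curve, and attaches a further band to $\Sigma_C$ with meridian $\mu\in\boldsymbol\alpha_1$. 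Relative to $\mathcal D^*$, what remains is a genus-adding handle on $\Sigma_C$ together with one new curve in each of $\boldsymbol\alpha_1,\boldsymbol\alpha_2,\boldsymbol\alpha_3$; after handleslides and isotopies to put these into the standard longitude/meridian/meridian form, $\mathcal D'$ is a type II torus stabilisation of $\mathcal D^*$, so by Proposition \ref{torus_stab_equiv}(3) $\mathcal T'$ is an internal stabilisation of $\mathcal T^*$. The main obstacle is precisely this bookkeeping: the two bands added to $\Sigma_C$ must be shown to combine, after handleslides and isotopy, into a single genus-adding handle carrying the required curve pattern, which requires a careful analysis of how each band's feet sit on $\partial\Sigma_C^*$.
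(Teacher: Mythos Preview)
Your proof is correct and follows essentially the same route as the paper's. Two minor remarks: in item (2), your alternative direct computation only treats the case where the shift decreases $b$ by $1$ and increases $g$ by $1$; the other case ($b\mapsto b+1$, $g$ unchanged) also yields $\Delta c=0$, but since your primary argument via invariance of each $k_i$ is exactly what the paper uses (citing the proof of Lemma \ref{bdy_destab_well_defined}), this is harmless. For item (3), the paper resolves precisely the obstacle you flag---combining the two bands on $\Sigma_C$ into a single genus-adding handle with the standard curve pattern---by an explicit three-row figure (Figure \ref{stabilisation_shift_proof_figure}) showing the composite of band stabilisation and shift, followed by diffeomorphisms and handleslides reducing it to a type II torus stabilisation; your outline matches this exactly but stops short of carrying out the picture.
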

\begin{proof}Item (1). This is an immediate consequence of the definition, by restricting the diagram to $(\Sigma_i', \boldsymbol{\delta}_i')$ and observing that it stabilises to $(\Sigma_i, \boldsymbol{\delta}_i).$

	Item (2). The complexity of the pseudo-trisection is unchanged since each $k_i$ is unchanged (as was observed in the proof of Lemma \ref{bdy_destab_well_defined}.) The complexity of the boundary decreases by 1, by Proposition \ref{3d_cplx_prop}.

	Item (3). This is proved diagrammatically, in Figure \ref{stabilisation_shift_proof_figure}. The top row shows local regions of a pseudo-trisection diagram of $\mathcal T^*$, and $\alpha$ denotes the curve along which the boundary stabilisation occurs. Applying a boundary stabilisation along $\alpha$ followed by a boundary stabilisation shift, the resulting pseudo-trisection diagram is shown in the second row. Applying diffeomorphisms to the $\Sigma_i'$ and $\Sigma_C'$, and isotoping the curves, the bottom row is obtained. Applying handleslides and isotopies to $\mu$, and a diffeomorphism rel boundary to $\Sigma_C'$, the diagram of the bottom row can be expressed as a type II torus stabilisation of the initial diagram. The result now follows from Proposition \ref{torus_stab_equiv}.
\end{proof}

\begin{figure}	
	\noindent\hspace{32px}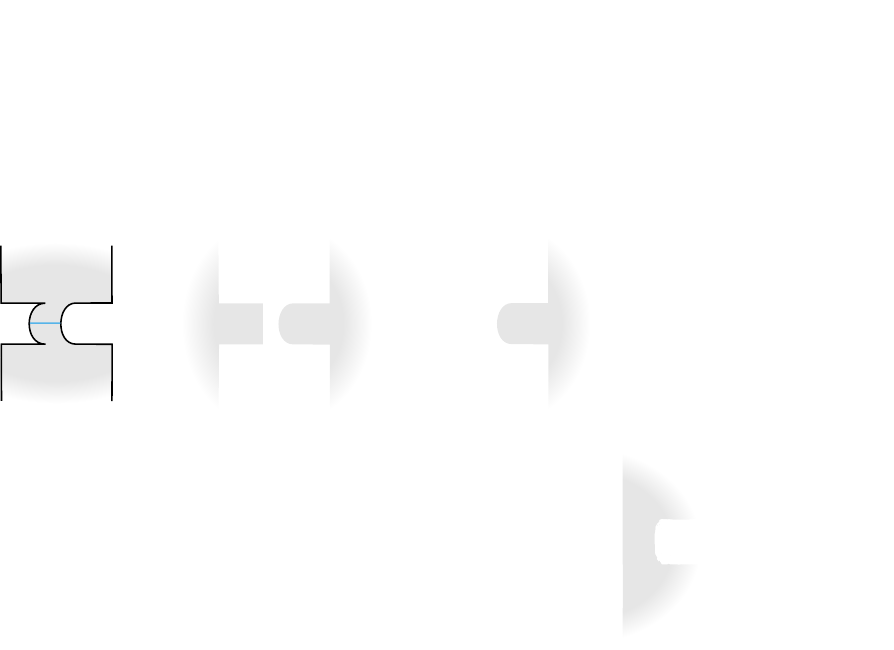
	\caption{Proof of Proposition \ref{stab_shift_prop}, item (3).}
\label{stabilisation_shift_proof_figure}
\end{figure}

\begin{thm0}Let $X$ be a compact oriented 4-manifold with non-empty connected boundary $Y$. Let $\tau$ be a trisection of $Y$. Then there is a pseudo-trisection $\mathcal T_1$ of $X$ such that $\mathcal T_1 |_{Y} = \tau$. 
\end{thm0}
\begin{proof}Case 1: $\tau$ is the trivial trisection of $Y = S^3$. Fill the boundary $Y$ with a 4-ball
	$B^4$ to obtain a closed manifold $X\cup B^4$. There is a trisection of $X\cup B^4$ by the usual existence result in the closed setting \cite{GayKir}. Choose any point in the central surface of the trisection, and there is a trivially trisected neighbourhood $N$ of the point. Now $X$ is diffeomorphic to $(X \cup B^4) - N$, and the latter is equipped with a pseudo-trisection with boundary $\tau$.

	Case 2: $Y$ is equipped with any trisection $\tau$ other than the trivial trisection of $S^3$. ($Y$ may or may not be $S^3$.) Equip $X$ with a pseudo-trisection $\mathcal T$ by Corollary \ref{pseudo_existence_1}. By Proposition \ref{3stabunique}, there is a trisection $\tau'$ of $Y$ such that there is a sequence of stabilisations and Heegaard stabilisations from $\mathcal T|_Y$ to $\tau'$, and a sequence of stabilisations from $\tau$ to $\tau'$. At the 4-dimensional level, there is an induced sequence of boundary stabilisations and Heegaard stabilisations from $\mathcal T$ to some pseudo-trisection $\mathcal T'$ with $\mathcal T'|_Y = \tau'$. Further, there is a sequence of boundary stabilisation shifts from $\mathcal T'$ to a pseudo-trisection $\mathcal T_1$ with $\mathcal T_1|_Y = \tau$.
\end{proof}

\subsection{Orientations of pseudo-trisections}
\label{diagram_orientation}

In this subsection we outline a convention for how a pseudo-trisection diagram determines an orientation of the corresponding 4-manifold. In general this requires one additional label to be added to the pseudo-trisection diagram.

\begin{enumerate}
	\item If the binding of the pseudo-trisection consists of one component and is depicted as a circle in the given pseudo-trisection diagram, it is oriented counterclockwise. Otherwise, one component of the binding must be labeled with an orientation.
	\item The orientation of the boundary component extends to an orientation of the surfaces $\Sigma_\circ, \circ \in \{C, 1, 2, 3\}$ by the outward-normal-first convention. (This further induces orientations on any remaining components of the binding.)
	\item An oriented normal vector to each $\Sigma_\circ$ is determined by the right hand rule. Pieces of the 3-skeleton are oriented as follows:
		\begin{enumerate}
			\item The boundary $\partial Y_i$ of $Y_i$ is oriented as $(-\Sigma_i) \cup \Sigma_{i+1}$. Now $Y_i$ is oriented by the outward-normal-first convention.
			\item The boundary $\partial H_i$ of $H_i$ is oriented as $(-\Sigma_C) \cup \Sigma_i$. Now $H_i$ is oriented by the outward-normal-first convention.
		\end{enumerate}
	\item Next, the boundaries $\partial X_i$ of the sectors are oriented as $Y_i \cup H_i \cup (-H_{i+1})$. The sectors $X_i$ are oriented by the outward-normal-first convention.
	\item The 4-manifold $X$ is oriented as $X_1 \cup X_2 \cup X_3$. The boundary $\partial X = Y$ is oriented as $Y_1 \cup Y_2 \cup Y_3$, and this orientation agrees with the induced boundary orientation from $X$.
\end{enumerate}

\section{Pseudo-bridge trisections}\label{bridge_section}

In this section we extend from considering 4-manifolds to considering pairs $(X, \mathcal K)$ where $\mathcal K$ is an embedded surface. In the case where $X$ and $\mathcal K$ are closed, these pairs are understood via \emph{bridge trisections} as in \cite{MeiZup}. In the relative case (i.e. when $X$ is compact with boundary $\partial X$, and $\mathcal K \subset X$ is a neatly embedded surface with boundary $L \subset \partial X$), there is a notion of relative bridge trisections introduced in general in \cite{Meier}. We summarise relative bridge trisections before introducing an analogue of bridge position for pseudo-trisections. We also introduce \emph{pseudo-bridge trisection diagrams} and describe how some properties of the encoded surface can be read off the diagrams. Recall the conventions for pairs $(X, \mathcal K)$ described in section \ref{conventions}---the introduction.

\subsection{Relative bridge trisections}
The general theory of relative bridge trisections, due to Meier \cite{Meier}, is reviewed here.

In the same way that relative trisections are defined to be determined by the spine (equivalently, a diagram on the central surface), relative bridge trisections are defined so that the embedded surface is encoded by a diagram on the central surface (without, for example, having to describe the boundary link of the surface). This is achieved through additional Morse-theoretic structure.

Suppose $X$ is equipped with a relative trisection. Let $N(B)$ be a tubular neighbourhood of the binding of the relative trisection. Each $H_i' = (X_i \cap X_{i+1}) - N(B)$ has the structure of a compression body between two surfaces with boundary, namely $\Sigma_C' = \Sigma_C - N(B)$ and $\Sigma_{i+1}' = \Sigma_{i+1} - N(B)$. By virtue of being a compression body, there is a standard self-indexing Morse function $\Phi_i$ which vanishes exactly on $\Sigma_C'$ and is maximised exactly on $\Sigma_{i+1}'$. Let $\tau_i$ be a tangle in $H_i$. It can be assumed that $\tau_i$ lies in $H_i'$ because $\tau_i$ avoids $B$ by the transversality theorem. After an isotopy of $\tau_i$, $\Phi_i$ restricted to $\tau_i$ is Morse. An arc in $\tau_i$ is \emph{vertical} if it has no local extrema, and is \emph{flat} if it has a unique local extremum, which is a maximum.

\begin{defn}Let $\tau_i$ be a tangle in $H_i$. This is a \emph{$(b,v)$-tangle} if it is isotopic rel boundary to a tangle consisting exactly of $b$ flat arcs and $v$ vertical arcs. The $v$ vertical arcs may end in separate components of $\partial X$, in which case the components are ordered and we have a $(b,\boldsymbol v)$-tangle where $\boldsymbol v$ is an ordered partition of $v$.
\end{defn}
In relative trisections the compressionbodies $H_i$ are also handlebodies; the notion of triviality above ensures that all arcs can be simultaneously isotoped to lie in the boundary.

\begin{defn}Let $\mathcal{D}_i$ be a disk-tangle in $X_i$. This is a \emph{trivial tangle} if it can be isotoped rel boundary to lie in $\partial X_i$. In this case the boundary is an unlink in $H_i \cup H_{i-1}$, and we say $\partial \mathcal{D}_i$ is in \emph{$(b,\boldsymbol v)$-bridge position} if the tangles in $H_i$ and $H_{i-1}$ are both $(b,\boldsymbol v)$-tangles. Given that the boundary link of $\partial X_i$ is in $(b,\boldsymbol v)$-bridge position, we say that $\mathcal{D}_i$ is a \emph{$(c,\boldsymbol v)$-disk-tangle}, where $c+v$ is the number of components of $\mathcal{D}_i$.
\end{defn}

\begin{defn} Let $X$ be a compact 4-manifold with boundary, and $\mathcal K$ a neatly embedded surface in $X$. Let $b,c, v$ be non-negative integers, $\boldsymbol v$ an ordered partition of $v$ into non-negative integers, and $\boldsymbol c$ an ordered partition of $c$ into three non-negative integers. The surface $\mathcal K$ is in \emph{$(b,\boldsymbol c; \boldsymbol v)$-bridge trisected position} in a relative trisection of $X$ if
	\begin{enumerate}
		\item each $\mathcal D_i = X_i \cap \mathcal K$ is a trivial $(c_i; \boldsymbol v)$-disk-tangle in $X_i$, and
		\item each $\tau_i = H_i \cap \mathcal K$ is a $(b;\boldsymbol v)$-tangle in $H_i$.
	\end{enumerate}
\end{defn}

\begin{remark} The above definition requires a lot of structure, keeping track of $(b,\boldsymbol c;\boldsymbol v)$. This is essentially so that the $\tau_i$ are enough to encode the surface. In the case of pseudo-bridge trisections we'll introduce a more general notion of bridge position for which the analogues of $\tau_i$ \emph{aren't} enough to encode the surface, but projections of tangles onto each surface in the pseudo-trisection diagram are enough. The overarching idea is that we're generalising the notion of bridge position in such a way as to obtain conceptually simpler diagrams and lower diagram complexity for a fixed surface (e.g. lower bridge index), but in exchange for diagrams with more pieces.
\end{remark}

\subsection{Pseudo-bridge trisections}
Next we introduce \emph{pseudo-bridge trisections}, the analogue of relative bridge trisections in the setting of pseudo-trisections. We also introduce \emph{pseudo-shadow diagrams}, which are analogous to shadow diagrams of relative bridge trisections. We show how a pseudo-shadow diagram uniquely encodes a pseudo-bridge trisection up to isotopy. We will immediately see that any pair $(X,\mathcal K)$ admits a pseudo-bridge trisection by application of the existence of relative bridge trisections.

\begin{defn}Let $X$ be a pseudo-trisected 4-manifold, and let $\mathcal K \subset X$ be a neatly embedded surface. The surface $\mathcal K$ is in \emph{pseudo-bridge position} if
	\begin{enumerate}
		\item each $\mathcal D_i = X_i \cap \mathcal K$ is a trivial disk-tangle in $X_i$, and
		\item each $\tau_i = H_i \cap \mathcal K$ and $L_i = Y_i \cap \mathcal K$ is a trivial tangle in $H_i$ and $Y_i$ respectively.
	\end{enumerate}
	Note that here a tangle is \emph{trivial} if it can be isotoped rel boundary to lie in the boundary of the ambient space. A \emph{bridge point} of the pseudo-bridge trisection is any point lying on the intersection between $\mathcal K$ and the 2-skeleton of the pseudo-trisection. The data of a pseudo-trisection together with a surface in pseudo-bridge position is termed a \emph{pseudo-bridge trisection}.
\end{defn}

\begin{eg}\label{D2_in_B4}The standard disk $D^2$ in $B^4$ admits a pseudo-bridge trisection which is also a relative bridge trisection. See Figure \ref{D2_in_B4_figure} (A).
	\begin{enumerate}
		\item Let $B^4$ be the standard unit ball in $\mathbb{R}^4$ expressed in $\{r, \theta,z,w\}$-coordinates.
		\item The trivial pseudo-trisection of $B^4$ \ref{trivialB4} (which is also a relative trisection) is given by decomposing into sectors with $0 \leq \theta \leq 2\pi/3$, $2\pi/3 \leq \theta \leq 4\pi/3$, and $4\pi/3 \leq \theta \leq 2\pi$.
		\item A standard disk embedding is given by $(z,w) = (0,0)$. This disk intersects the aforementioned pseudo-trisection in bridge position. 
	\end{enumerate}
	One can show that this is a \emph{relative bridge trisection} of the standard disk.
\end{eg}
\begin{eg}\label{D2_in_B4_2}The standard disk in $B^4$ also admits a simpler pseudo-bridge trisection with less symmetry, which in particular is not a relative bridge trisection. See Figure \ref{D2_in_B4_figure} (B).
	\begin{enumerate}
		\item Let $B^4$ be parametrised and trisected as in Example \ref{D2_in_B4}.
		\item Consider the cartesian parametrisation of $\mathbb{R}^4$, that is, $(x,y, z, w) = (r\cos \theta, r\sin \theta, z, w)$. The plane $(x,w) = (1/2, 0)$ intersects $B^4$ along a standard disk as a pseudo-bridge trisection. 
	\end{enumerate}
	This pseudo-trisection fails to be a relative trisection for Morse theoretic reasons: although $\tau_1$ is a trivial tangle, it has a local minimum and cannot be isopted to be \emph{flat} or \emph{vertical}.
\end{eg}

\begin{figure}
  \begin{subfigure}{0.48\textwidth}
	  \begin{center}
\begingroup%
  \makeatletter%
  \providecommand\color[2][]{%
    \errmessage{(Inkscape) Color is used for the text in Inkscape, but the package 'color.sty' is not loaded}%
    \renewcommand\color[2][]{}%
  }%
  \providecommand\transparent[1]{%
    \errmessage{(Inkscape) Transparency is used (non-zero) for the text in Inkscape, but the package 'transparent.sty' is not loaded}%
    \renewcommand\transparent[1]{}%
  }%
  \providecommand\rotatebox[2]{#2}%
  \newcommand*\fsize{\dimexpr\f@size pt\relax}%
  \newcommand*\lineheight[1]{\fontsize{\fsize}{#1\fsize}\selectfont}%
  \ifx\svgwidth\undefined%
    \setlength{\unitlength}{184.4301995bp}%
    \ifx\svgscale\undefined%
      \relax%
    \else%
      \setlength{\unitlength}{\unitlength * \real{\svgscale}}%
    \fi%
  \else%
    \setlength{\unitlength}{\svgwidth}%
  \fi%
  \global\let\svgwidth\undefined%
  \global\let\svgscale\undefined%
  \makeatother%
  \begin{picture}(1,0.92625784)%
    \lineheight{1}%
    \setlength\tabcolsep{0pt}%
    \put(0,0){\includegraphics[width=\unitlength,page=1]{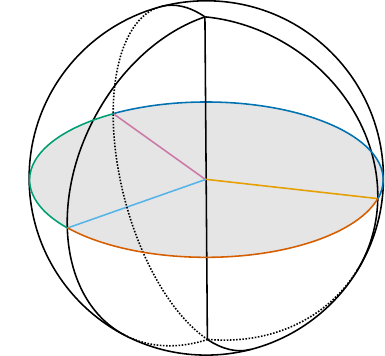}}%
    \put(0.65212261,0.21232571){\makebox(0,0)[lt]{\lineheight{1.25}\smash{\begin{tabular}[t]{l}$L_1$\end{tabular}}}}%
    \put(0.6971371,0.66930249){\makebox(0,0)[lt]{\lineheight{1.25}\smash{\begin{tabular}[t]{l}$L_2$\end{tabular}}}}%
    \put(-0.00046596,0.49484497){\makebox(0,0)[lt]{\lineheight{1.25}\smash{\begin{tabular}[t]{l}$L_3$\end{tabular}}}}%
    \put(0.36039067,0.35709136){\makebox(0,0)[lt]{\lineheight{1.25}\smash{\begin{tabular}[t]{l}$\tau_1$\end{tabular}}}}%
    \put(0.72492265,0.45551393){\makebox(0,0)[lt]{\lineheight{1.25}\smash{\begin{tabular}[t]{l}$\tau_2$\end{tabular}}}}%
    \put(0.34593252,0.52385603){\makebox(0,0)[lt]{\lineheight{1.25}\smash{\begin{tabular}[t]{l}$\tau_3$\end{tabular}}}}%
  \end{picture}%
\endgroup%

    \caption{A standard disk in relative bridge position.}
    \end{center}
    \label{D2_in_B4_fig_a}
  \end{subfigure}
  \hspace*{\fill}
  \begin{subfigure}{0.48\textwidth}
	  \begin{center}
\begingroup%
  \makeatletter%
  \providecommand\color[2][]{%
    \errmessage{(Inkscape) Color is used for the text in Inkscape, but the package 'color.sty' is not loaded}%
    \renewcommand\color[2][]{}%
  }%
  \providecommand\transparent[1]{%
    \errmessage{(Inkscape) Transparency is used (non-zero) for the text in Inkscape, but the package 'transparent.sty' is not loaded}%
    \renewcommand\transparent[1]{}%
  }%
  \providecommand\rotatebox[2]{#2}%
  \newcommand*\fsize{\dimexpr\f@size pt\relax}%
  \newcommand*\lineheight[1]{\fontsize{\fsize}{#1\fsize}\selectfont}%
  \ifx\svgwidth\undefined%
    \setlength{\unitlength}{170.82991843bp}%
    \ifx\svgscale\undefined%
      \relax%
    \else%
      \setlength{\unitlength}{\unitlength * \real{\svgscale}}%
    \fi%
  \else%
    \setlength{\unitlength}{\svgwidth}%
  \fi%
  \global\let\svgwidth\undefined%
  \global\let\svgscale\undefined%
  \makeatother%
  \begin{picture}(1,1)%
    \lineheight{1}%
    \setlength\tabcolsep{0pt}%
    \put(0,0){\includegraphics[width=\unitlength,page=1]{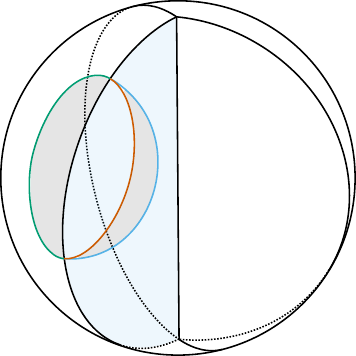}}%
    \put(0.02372575,0.60983391){\makebox(0,0)[lt]{\lineheight{1.25}\smash{\begin{tabular}[t]{l}$L_3$\end{tabular}}}}%
    \put(0.29142743,0.52330456){\makebox(0,0)[lt]{\lineheight{1.25}\smash{\begin{tabular}[t]{l}$L_1$\end{tabular}}}}%
    \put(0.37786461,0.30500412){\makebox(0,0)[lt]{\lineheight{1.25}\smash{\begin{tabular}[t]{l}$\tau_1$\end{tabular}}}}%
  \end{picture}%
\endgroup%

    \caption{A standard disk in pseudo-bridge position.}
    \end{center}
    \label{D2_in_B4_fig_b}
  \end{subfigure}
\caption{Schematics of two distinct pseudo-bridge trisections of a standard disk in $B^4$}
\label{D2_in_B4_figure}
\end{figure}

\begin{remark}Any $(X,\mathcal K)$ admits a pseudo-bridge trisection. First, there is a relative trisection $\mathcal T$ of $X$ with non-empty binding by the existence result of Gay and Kirby \cite{GayKir}. Such a relative trisection is necessarily a pseudo-trisection by Proposition \ref{relativeispseudo}. On the other hand, by the existence result of Meier \cite{Meier}, $(X,\mathcal K)$ admits a relative bridge trisection whose underlying relative trisection is $\mathcal {T}$. This relative bridge trisection is necessarily a pseudo-bridge trisection.
\end{remark}

\subsection{Pseudo-shadow diagrams}
\emph{Shadow diagrams} \cite{Meier} are a diagrammatic theory of relative bridge trisections compatible with relative trisection diagrams. Here we introduce \emph{pseudo-shadow diagrams}, a diagrammatic theory of pseudo-bridge trisections compatible with pseudo-trisection diagrams. In particular, any pair $(X,\mathcal K)$ admits a description by a pseudo-shadow diagram.

\begin{defn}Let $\mathcal K, \mathcal K' \subset X$. The surfaces $\mathcal K$ and $\mathcal K'$ are \emph{neatly isotopic} if they are homotopic through neat embeddings.
\end{defn}

\begin{prop}\label{bridgeskeleton} A pseudo-bridge trisected surface $\mathcal K \subset X$ is determined up to neat isotopy by the \emph{3-skeleton} of the pseudo-bridge trisection, that is, by the data of all the tangles $(H_i, \tau_i)$, $(Y_i, L_i)$, and how they intersect.
\end{prop}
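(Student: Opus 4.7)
The plan is to reduce the claim to a local uniqueness statement for trivial disk-tangles in each 4-dimensional sector. Suppose $\mathcal K, \mathcal K' \subset X$ are two pseudo-bridge trisected surfaces sharing the same 3-skeleton data, and write $\mathcal D_i = X_i \cap \mathcal K$ and $\mathcal D_i' = X_i \cap \mathcal K'$. Both are trivial disk-tangles in $X_i$ with common boundary $\partial \mathcal D_i = \partial \mathcal D_i' = \tau_i \cup \tau_{i+1} \cup L_i$, where the matching of arc endpoints along the 2-skeleton is prescribed by the intersection data in the 3-skeleton. It suffices to produce an ambient isotopy of $X_i$ taking $\mathcal D_i$ to $\mathcal D_i'$ and fixing $\partial X_i$ pointwise for each $i$, since such isotopies extend by the identity outside $X_i$ and compose to a global neat isotopy $\mathcal K \to \mathcal K'$.

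The heart of the argument is the following lemma: any two trivial disk-tangles in the 4-dimensional 1-handlebody $X_i \cong \natural^{k_i}(S^1 \times B^3)$ with the same boundary are isotopic rel boundary. To prove this, first push both $\mathcal D_i$ and $\mathcal D_i'$ into $\partial X_i = \#^{k_i}(S^1\times S^2)$ using the isotopies provided by triviality, yielding two collections of surface-disks sharing a boundary link. For each pair of matched components $D \subset \mathcal D_i$ and $D' \subset \mathcal D_i'$, the union $D \cup D'$ is then an embedded 2-sphere in $X_i$. Since $\pi_2(X_i) = 0$ (the 1-skeleton of $X_i$ is a wedge of circles), this sphere is nullhomotopic, and a standard fact about 4-dimensional 1-handlebodies is that such embedded spheres bound embedded 4-balls; each such ball provides the required isotopy carrying $D$ to $D'$ rel boundary.

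Finally, assembling the sector-wise isotopies into a global neat isotopy of $X$ is routine: each ambient isotopy fixes $\partial X_i$ pointwise, so in particular it preserves $\partial X = \bigcup_i Y_i$, and neatness (transversality of $\mathcal K$ to $\partial X$) is maintained throughout. The main obstacle will be the 4-dimensional lemma on nullhomotopic 2-spheres in $\natural^{k_i}(S^1 \times B^3)$; while this is a well-known consequence of the Laudenbach--Po\'enaru theorem (already invoked in Proposition \ref{realisation_welldefined}), care is needed to apply it relative to a prescribed boundary link and to handle multi-component disk-tangles coherently, since the matchings between components of $\mathcal D_i$ and $\mathcal D_i'$ must be induced by the bridge-point data on the 2-skeleton.
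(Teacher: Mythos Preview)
Your overall strategy matches the paper's: reduce to showing that any two trivial disk-tangles in $X_i$ with the same boundary link are isotopic rel boundary, and then assemble the sectorwise isotopies. The paper simply cites Lemma~2.3 and Corollary~2.4 of Meier--Zupan for this uniqueness; you attempt to prove it directly.

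However, your proof of the key lemma has a genuine gap. The step ``since $\pi_2(X_i)=0$, the sphere $D\cup D'$ is nullhomotopic, and nullhomotopic embedded spheres in $\natural^{k_i}(S^1\times B^3)$ bound embedded balls'' is not valid. Because $\pi_2(\natural^{k_i}(S^1\times B^3))=0$, \emph{every} embedded $2$-sphere there is nullhomotopic; yet there are knotted $2$-spheres already in $B^4$ (restrictions of $2$-knots in $S^4$), and these do not bound embedded $3$-balls. So nullhomotopy alone cannot produce the ball you need, and this is not a consequence of Laudenbach--Po\'enaru, which concerns extending boundary diffeomorphisms rather than spheres bounding balls. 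There is also a smaller issue: after pushing both $\mathcal D_i$ and $\mathcal D_i'$ into $\partial X_i$, their interiors may intersect, so $D\cup D'$ need not be embedded. This is repairable by pushing to opposite sides of a collar, but the main obstruction above remains.

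The correct argument does not pass through an arbitrary sphere-bounds-ball statement. It uses triviality directly: each disk-tangle is isotopic rel boundary into $\partial X_i$, and one then shows that any two systems of disjoint spanning disks for the given unlink in $\partial X_i=\#^{k_i}(S^1\times S^2)$ become isotopic rel boundary once pushed into the $4$-dimensional handlebody $X_i$. This is exactly the content of the Meier--Zupan results the paper cites, and it uses the specific structure of trivial disk-tangles rather than a general $\pi_2$ argument.
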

\begin{proof} The data of the 3-skeleton describes how the triples $((H_i, \tau_i),(H_{i+1}, \tau_{i+1}),(Y_i, L_i))$ glue together to produce links in the boundaries of the sectors $X_i$. These links are necessarily unlinks, which bound unique trivial disk-tangles in the sectors up to neat isotopy. Compare with Lemma 2.3 and Corollary 2.4 in \cite{MeiZup}.
\end{proof}

Consequently, we can define an analogue of \emph{shadow diagrams} as follows:

\begin{defn}
	A \emph{pseudo-shadow diagram} consists of the data $(\mathcal D, \tau_1, \tau_2, \tau_3, L_1,L_2,L_3)$ where
	\begin{enumerate}
		\item $\mathcal{D} = (\Sigma_C, \Sigma_i, \boldsymbol\alpha_i, \boldsymbol\delta_i)$ is a pseudo-trisection diagram.
		\item For $i \in \{1,2,3\}$, $\tau_i$ is a collection of generically embedded arcs in $\Sigma_i \cup \Sigma_{C}$, all with distinct endpoints. If any two arcs in a given $\tau_i$ cross, the crossing order is indicated. The arcs intersect $B = \partial \Sigma_i = \partial \Sigma_C$ transversely.
		\item Similarly, each $L_i$ is a collection of generically embedded arcs in $\Sigma_i \cup \Sigma_{i+1}$ with distinct endpoints. If any two cross, the crossing order is indicated. Any intersections with $B$ are transverse.
		\item For any $i$, $\tau_i, L_i$, and $L_{i-1}$ may intersect $\Sigma_i$. Each of these families of arcs are pairwise transverse in $\Sigma_i$.
		\item There is a collection of points $\mathcal B_i \subset \Sigma_i$ which consists precisely of the endpoints of $\tau_i, L_i$, and $L_{i-1}$ that lie in $\Sigma_i$. (Consequently, each point in $\mathcal B_i$ has degree 3 in a pseudo-shadow diagram.)
		\item The above two points also hold verbatim for $(\Sigma_C, \tau_1, \tau_2, \tau_3)$ instead of $(\Sigma_i, \tau_i, L_i, L_{i-1})$.The set of endpoints of arcs in $\tau_i$ lying in $\Sigma_C$ is denoted $\mathcal B_C$.
		\item The unions $\tau_i \cup \tau_{i+1} \cup L_i$ form unlinks in each of the 3-dimensional manifolds encoded by the corresponding triple Heegaard diagram in the underlying pseudo-trisection diagram.
	\end{enumerate}
\end{defn}
For notational brevity, we frequently write $(\mathcal D, \tau_i, L_i)$ instead of $(\mathcal D, \tau_1, \tau_2, \tau_3, L_1,L_2,L_3)$.

\begin{defn}\label{realisation_bridge}There is a \emph{realisation map} 
$$\mathcal R : \{\text{pseudo-shadow diagrams}\} \to \frac{\{\text{pseudo-bridge trisections}\}}{\text{diffeomorphism}}$$
extending the realisation map of Definition \ref{realisation} defined as follows:
\begin{enumerate}
	\item A pseudo-shadow diagram has an underlying pseudo-trisection diagram. First, this is mapped to a pseudo-trisection by the realisation map of Definition \ref{realisation}.
	\item Each collection of arcs $\tau_i$ and $L_i$ lies in the 2-skeleton of the above pseudo-trisection. Each collection is isotoped rel boundary to form a tangle in $H_i$ and $Y_i$ respectively.
	\item Each triple union $\tau_i \cup \tau_{i+1} \cup L_i$ forms an unlink in $\partial X_i$. This bounds a collection of disks in $\partial X_i$ (uniquely up to isotopy rel boundary).
	\item The above collections of disks are isotoped rel boundary to be neatly embedded in $X_i$. The result is now a pseudo-bridge trisection.
\end{enumerate}
\end{defn}

\begin{prop}The realisation map of Definition \ref{realisation_bridge} is well defined.
\end{prop}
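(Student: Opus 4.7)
The plan is to decompose the realisation map into two stages: first constructing the 3-skeleton of the pseudo-bridge trisection (the ambient pseudo-trisection together with the tangles $\tau_i \subset H_i$ and $L_i \subset Y_i$), and second recovering the disk-tangles $\mathcal D_i \subset X_i$ from this 3-skeleton via steps (3) and (4) of Definition \ref{realisation_bridge}. Proposition \ref{bridgeskeleton} ensures the second stage is well-defined up to neat isotopy once the 3-skeleton is fixed, so the substance of the proof lies in well-definedness of the 3-skeleton.

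For the ambient pseudo-trisection, Proposition \ref{realisation_welldefined} provides a diffeomorphism between any two realisations $(X,\mathcal T)$ and $(X',\mathcal T')$ sending sectors to sectors. Examining that proof, the diffeomorphism can be arranged to be the identity on the 2-skeleton $\Sigma_C \cup \bigcup_i \Sigma_i$, so the arcs of the pseudo-shadow diagram (which sit entirely in the 2-skeleton) are tautologically preserved. For each arc of $\tau_i$ (respectively $L_i$), two slight perturbations into the interior of $H_i$ (respectively $Y_i$) rel endpoints co-bound an embedded strip in the handlebody, hence differ by an ambient isotopy rel boundary. The resulting trivial tangles $(H_i,\tau_i)$ and $(Y_i,L_i)$ are therefore well-defined up to isotopy.

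With the 3-skeleton and its tangles fixed, the unlinks $\tau_i \cup \tau_{i+1} \cup L_i \subset \partial X_i \cong \#^{k_i}(S^1 \times S^2)$ bound trivial disk systems uniquely up to isotopy rel boundary (compare Lemma 2.3 and Corollary 2.4 of \cite{MeiZup}). By Laudenbach--Po\'enaru \cite{LauPoe}, these boundary disks extend to neatly embedded disk-tangles $\mathcal D_i \subset X_i \cong \natural^{k_i}(S^1 \times B^3)$ uniquely up to neat isotopy, exactly as in the final step of the proof of Proposition \ref{realisation_welldefined}. Assembling these pieces produces the desired diffeomorphism of pseudo-bridge trisections.

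The main obstacle will be promoting the local uniqueness statements---arcs push into handlebodies up to isotopy, unlinks bound unique disk systems up to isotopy, trivial disk-tangles are unique up to neat isotopy---into a single global diffeomorphism between two realisations. I expect this to be handled inductively by dimension, in parallel with the proof of Proposition \ref{realisation_welldefined}: extend the identity on the 2-skeleton across the 3-skeleton using the tangle isotopies and contractibility of $\mathrm{Diff}(B^2 \text{ rel }\partial)$, and then across the 4-sectors using the disk-tangle isotopies and Laudenbach--Po\'enaru.
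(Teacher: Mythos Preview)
Your proposal is correct and follows essentially the same approach as the paper: use Proposition \ref{realisation_welldefined} to identify the ambient pseudo-trisections, argue the tangles in the 3-skeleton are well-defined up to isotopy, and then invoke Proposition \ref{bridgeskeleton} to recover the surface uniquely up to neat isotopy. One small remark: the appeal to Laudenbach--Po\'enaru for the disk-tangle step is unnecessary---Proposition \ref{bridgeskeleton} (via \cite{MeiZup}) already gives uniqueness of trivial disk-tangles bounding a fixed unlink, and pushing a boundary-parallel disk system into the interior is unique up to isotopy rel boundary by a collar argument, so your anticipated inductive assembly in the final paragraph is not needed.
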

\begin{proof}We must show that the map as defined truly sends a pseudo-shadow diagram to a pseudo-bridge trisection, and that any two pseudo-bridge trisections built from the same pseudo-shadow diagram by the map are neatly isotopic. (More precisely, after a diffeomorphism, the embedded surfaces differ by neat isotopy.)

	Let $(X, \mathcal K)$ be the realisation of a pseudo-shadow diagram $\mathcal D$. Then each $\mathcal K \cap X_i$ is necessarily a trivial disk tangle, because it is obtained by isotoping a collection of disks rel boundary from $\partial X_i$ to $X_i$. Similarly, each $\mathcal K \cap H_i$ and $\mathcal K \cap Y_i$ is necessarily a trivial tangle. It follows that $(X, \mathcal K)$ is a pseudo-bridge trisection.

	Next, suppose $(X, \mathcal K)$ and $(X',\mathcal K')$ are realisations of the same pseudo-shadow diagram $\mathcal D$. By Proposition \ref{realisation_welldefined}, the underlying 4-manifolds $X$ and $X'$ and their pseudo-trisections are diffeomorphic. Hereafter $\mathcal K, \mathcal K'$ are considered to both lie in $X$. The arcs $\tau_i$ and $L_i$ lift to the 2-skeleton of the pseudo-trisection of $X$, and are further isotoped into $H_i$ and $Y_i$ respectively to produce exactly the 3-skeleta of $(X, \mathcal K)$ and $(X, \mathcal K')$. By Proposition \ref{bridgeskeleton}, the 3-skeleta determine the surface in pseudo-bridge position up to neat isotopy.
\end{proof}

\begin{eg}\label{pseudo_bridge_eg}Figure \ref{disk_B4_diagram_figure} is a pseudo-shadow diagram. This can be verified by inspecting the definition of a pseudo-shadow diagram. The most interesting condition is item (7). The union $\tau_1 \cup \tau_2 \cup L_1$ consists of an unknot in $B^3$, as shown in Figure \ref{disk_B4_trivialcheck}. 

	The underlying pseudo-trisection diagram of the aforementioned pseudo-shadow diagram is that of the trivial trisection (of $B^4$). The surface is built by applying the realisation map. Explicitly, the arcs $\tau_1, \tau_2$, and $L_1$ lift to an unknot in $\partial X_1 = B^3$. Similarly, $\tau_2, \tau_3, L_2$ lift to an unknot in $\partial X_2 = B^3$. The common intersection of these two unknots is $\tau_2$. The two unknots next encode disks in $X_1$ and $X_2$ respectively, and the union of these disks is itself a disk in $X_1 \cup X_2$ with boundary $\tau_1 \cup \tau_3 \cup L_1 \cup L_2$. Finally, $\tau_1 \cup \tau_3 \cup L_3$ lifts to an unknot in $\partial X_3$, which bounds a disk in $X_3$. Gluing in this final disk produces a disk in $X$ exactly as in Figure \ref{D2_in_B4_figure} (A).
\end{eg}

\begin{figure}	
	\noindent\hspace{0px}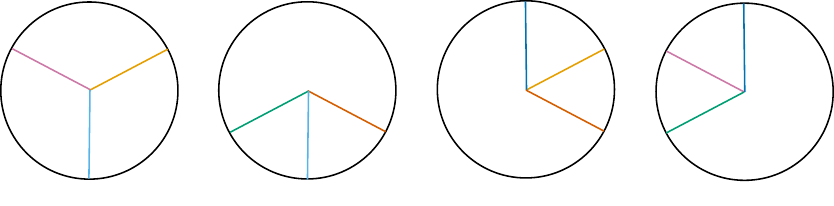
	\caption{A pseudo-shadow diagram of a disk in relative bridge position in $B^4$.}
\label{disk_B4_diagram_figure}
\end{figure}

\begin{figure}
	\noindent\hspace{0px}
\begingroup%
  \makeatletter%
  \providecommand\color[2][]{%
    \errmessage{(Inkscape) Color is used for the text in Inkscape, but the package 'color.sty' is not loaded}%
    \renewcommand\color[2][]{}%
  }%
  \providecommand\transparent[1]{%
    \errmessage{(Inkscape) Transparency is used (non-zero) for the text in Inkscape, but the package 'transparent.sty' is not loaded}%
    \renewcommand\transparent[1]{}%
  }%
  \providecommand\rotatebox[2]{#2}%
  \newcommand*\fsize{\dimexpr\f@size pt\relax}%
  \newcommand*\lineheight[1]{\fontsize{\fsize}{#1\fsize}\selectfont}%
  \ifx\svgwidth\undefined%
    \setlength{\unitlength}{158.78825293bp}%
    \ifx\svgscale\undefined%
      \relax%
    \else%
      \setlength{\unitlength}{\unitlength * \real{\svgscale}}%
    \fi%
  \else%
    \setlength{\unitlength}{\svgwidth}%
  \fi%
  \global\let\svgwidth\undefined%
  \global\let\svgscale\undefined%
  \makeatother%
  \begin{picture}(1,0.89731741)%
    \lineheight{1}%
    \setlength\tabcolsep{0pt}%
    \put(0,0){\includegraphics[width=\unitlength,page=1]{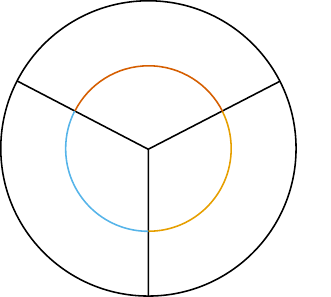}}%
    \put(0.23218859,0.34218612){\makebox(0,0)[lt]{\lineheight{1.25}\smash{\begin{tabular}[t]{l}$\tau_1$\end{tabular}}}}%
    \put(0.68281304,0.31107293){\makebox(0,0)[lt]{\lineheight{1.25}\smash{\begin{tabular}[t]{l}$\tau_2$\end{tabular}}}}%
    \put(0.4311649,0.71663963){\makebox(0,0)[lt]{\lineheight{1.25}\smash{\begin{tabular}[t]{l}$L_1$\end{tabular}}}}%
    \put(0.45758553,0.08522272){\makebox(0,0)[lt]{\lineheight{1.25}\smash{\begin{tabular}[t]{l}$\Sigma_C$\end{tabular}}}}%
    \put(0.731929,0.53354542){\makebox(0,0)[lt]{\lineheight{1.25}\smash{\begin{tabular}[t]{l}$\Sigma_2$\end{tabular}}}}%
    \put(0.08483592,0.54329549){\makebox(0,0)[lt]{\lineheight{1.25}\smash{\begin{tabular}[t]{l}$\Sigma_1$\end{tabular}}}}%
  \end{picture}%
\endgroup%

	\caption{The knot $\tau_1\cup \tau_2 \cup L_1 \subset S^3$, see Example \ref{pseudo_bridge_eg}.}
\label{disk_B4_trivialcheck}
\end{figure}

\begin{eg}Figure \ref{disk_B4_diagram_figure_2} is a pseudo-shadow diagram of a disk in $B^4$, namely that depicted in Figure \ref{D2_in_B4_figure} (B).
\end{eg}

\begin{figure}	
	\noindent\hspace{0px}
\begingroup%
  \makeatletter%
  \providecommand\color[2][]{%
    \errmessage{(Inkscape) Color is used for the text in Inkscape, but the package 'color.sty' is not loaded}%
    \renewcommand\color[2][]{}%
  }%
  \providecommand\transparent[1]{%
    \errmessage{(Inkscape) Transparency is used (non-zero) for the text in Inkscape, but the package 'transparent.sty' is not loaded}%
    \renewcommand\transparent[1]{}%
  }%
  \providecommand\rotatebox[2]{#2}%
  \newcommand*\fsize{\dimexpr\f@size pt\relax}%
  \newcommand*\lineheight[1]{\fontsize{\fsize}{#1\fsize}\selectfont}%
  \ifx\svgwidth\undefined%
    \setlength{\unitlength}{400.3114428bp}%
    \ifx\svgscale\undefined%
      \relax%
    \else%
      \setlength{\unitlength}{\unitlength * \real{\svgscale}}%
    \fi%
  \else%
    \setlength{\unitlength}{\svgwidth}%
  \fi%
  \global\let\svgwidth\undefined%
  \global\let\svgscale\undefined%
  \makeatother%
  \begin{picture}(1,0.24593409)%
    \lineheight{1}%
    \setlength\tabcolsep{0pt}%
    \put(0,0){\includegraphics[width=\unitlength,page=1]{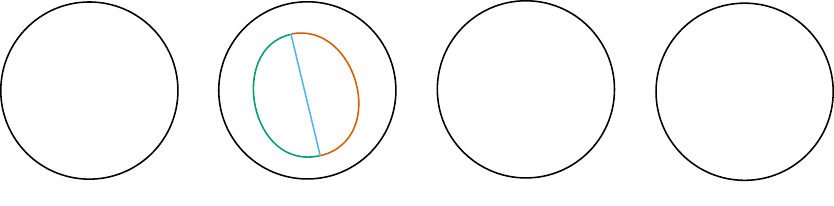}}%
    \put(0.09130874,0.00551286){\makebox(0,0)[lt]{\lineheight{1.25}\smash{\begin{tabular}[t]{l}$\Sigma_C$\end{tabular}}}}%
    \put(0.35497537,0.00414717){\makebox(0,0)[lt]{\lineheight{1.25}\smash{\begin{tabular}[t]{l}$\Sigma_1$\end{tabular}}}}%
    \put(0.61773093,0.00554722){\makebox(0,0)[lt]{\lineheight{1.25}\smash{\begin{tabular}[t]{l}$\Sigma_2$\end{tabular}}}}%
    \put(0.87705064,0.00484398){\makebox(0,0)[lt]{\lineheight{1.25}\smash{\begin{tabular}[t]{l}$\Sigma_3$\end{tabular}}}}%
    \put(0.3709173,0.13194615){\makebox(0,0)[lt]{\lineheight{1.25}\smash{\begin{tabular}[t]{l}$\tau_1$\end{tabular}}}}%
    \put(0.42985378,0.15175413){\makebox(0,0)[lt]{\lineheight{1.25}\smash{\begin{tabular}[t]{l}$L_1$\end{tabular}}}}%
    \put(0.31141033,0.10857385){\makebox(0,0)[lt]{\lineheight{1.25}\smash{\begin{tabular}[t]{l}$L_3$\end{tabular}}}}%
  \end{picture}%
\endgroup%

	\caption{A pseudo-shadow diagram of a disk in pseudo-bridge position in $B^4$.}
\label{disk_B4_diagram_figure_2}
\end{figure}

\subsection{Resolving crossings in pseudo-shadow diagrams}

When applying the realisation map, arcs in the pseudo-shadow diagram are lifted from the 2-skeleton into the 3-skeleton. Crossings between arcs are resolved in different ways depending on the types of arcs, as well as whether or not the orientations of the underlying surface and 3-manifold are compatible. For example, in Subsection \ref{diagram_orientation}, we saw that $\partial Y_i = (-\Sigma_i) \cup \Sigma_{i+1}$ as an oriented union. Lifting a crossing from $\Sigma_i$ to $Y_i$ may reverse it, while lifting from $\Sigma_{i+1}$ to $Y_i$ may preserve it. The following list outlines how to resolve any crossing when realising a pseudo-bridge diagram.
\begin{remark}The actual \emph{choice} or \emph{convention} happens when we declare how an orientation of the binding (of a pseudo-trisection diagram) induces an orientation of the pseudo-trisected 4-manifold. This determines exactly how arcs in a pseudo-shadow diagram must lift to the 4-manifold. The rules being outlined below should be thought of as ``what do crossings between arcs in a pseudo-shadow diagram actually represent, given the orientation conventions from Subsection \ref{diagram_orientation}?".
\end{remark}
\begin{enumerate}
	\item For each $i$, consider the oriented pairs $(\partial X_i, \Sigma_i), (\partial X_i, \Sigma_{i+1}), (\partial X_i, \Sigma_C)$, and $(Y, \Sigma_i)$. Each of the twelve pairs is further equipped with a normal vector field on $\Sigma_{\circ}, \circ \in \{i, i+1, C\}$ which extends the orientation of $\Sigma_\circ$ to agree with the orientation of the ambient 3-manifold.
	\item In a pseudo-shadow diagram, $L_{i-1}$ \emph{lies above} $L_{i}$ in the sense that any arcs of $L_{i-1}$ intersecting arcs of $L_{i}$ in $\Sigma_{i}$ lift to a position above $L_{i}$ with respect to the aforementioned normal vector field on $(Y, \Sigma_{i})$. Similarly, $\tau_i$ lies above $L_i$; $L_i$ lies above $\tau_{i+1}$; and $\tau_{i+1}$ lies above $\tau_i$. (Note that specifying two tangles in the pseudo-shadow diagram determines a unique ambient pair $(\partial X_i, \Sigma_i), \ldots, (Y, \Sigma_i)$.)
	\item Any self-crossings, that is, crossings of $L_i$ or $\tau_i$ with themselves in $\Sigma_\circ$, necessarily have crossing order indicated in the diagram. We say crossing orders are \emph{preserved} if the over-strand \emph{lies above} the under-strand and \emph{reversed} otherwise. We have:
		\begin{enumerate}
			\item Self-crossings of $L_{i}$ in $(Y, \Sigma_{i+1})$ and $(\partial X_{i}, \Sigma_{i+1})$ are preserved; self-crossings of $L_i$ in $(Y, \Sigma_i)$ and $(\partial X_i, \Sigma_i)$ are reversed.
			\item Self-crossings of $\tau_i$ in $(\partial X_i, \Sigma_i)$ and $(\partial X_{i-1}, \Sigma_C)$ are preserved; self-crossings of $\tau_i$ in $(\partial X_i, \Sigma_C)$ and $\partial X_{i-1}, \Sigma_i)$ are reversed.
		\end{enumerate}
\end{enumerate}

\begin{eg}\label{lefthandedness} Figure \ref{trefoil_B4_diagram_figure} is a valid pseudo-shadow diagram of a neatly embedded surface $\mathcal K$ in $B^4$. The boundary link $L_1 \cup L_2 \cup L_3 = \partial \mathcal K \subset S^3$ is easily seen to arise from a knot diagram with three crossings. However, determining whether it is an unknot, a left-handed trefoil, or right-handed trefoil requires some care.

	We observe that the self-crossings are: $L_3$ in $\Sigma_1$, $L_1$ in $\Sigma_2$, and $L_2$ in $\Sigma_3$. It follows that each of the three self-crossings are \emph{preserved}. Since every crossing is a negative crossing, it follows that the boundary knot is a left-handed trefoil.
\end{eg}

\begin{figure}
	\noindent\hspace{0px}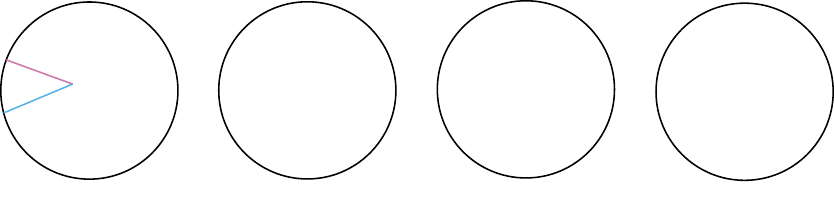
	\caption{A pseudo-shadow diagram of a genus 1 surface in $B^4$ with boundary a trefoil knot. The boundary knot is given by $L_1, L_2,$ and $L_3$.}
\label{trefoil_B4_diagram_figure}

\end{figure}

\subsection{Pseudo-shadow diagram examples and calculations}

In this subsection we provide several examples of pseudo-shadow diagrams, and show how to compute some invariants such as orientability (of the embedded surface), the intrinsic topology of the surface, and the homology class represented by the surface in the ambient 4-manifold.

\begin{prop}A surface realising a pseudo-shadow diagram is orientable if and only if the graph $\bigcup_i (L_i \cup \tau_i) \subset (\bigcup_i \Sigma_i) \cup \Sigma_C$ can be consistently labelled such that every vertex is either a source or a sink. (In this context the \emph{vertices} are the bridge points and the intersections of the $L_i$ and $\tau_i$ with the binding.)
\end{prop}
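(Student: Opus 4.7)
The plan is to view the surface $\mathcal K$ as a CW complex whose cellular decomposition is inherited from the pseudo-shadow diagram: the 2-cells are the components of the trivial disk-tangles $\mathcal D_1 \cup \mathcal D_2 \cup \mathcal D_3$, the 1-cells are the edges of $G$ (the maximal arc pieces between consecutive vertices), and the 0-cells are the bridge points and binding intersections. The 1-skeleton of this CW complex is exactly $G$. Orientability of $\mathcal K$ is then equivalent to the existence of a coherent orientation of each 2-cell such that the two induced orientations on every interior 1-cell (which is necessarily a piece of some $\tau_i$, shared between a 2-cell in $\mathcal D_{i-1}$ and one in $\mathcal D_i$) are opposite.

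For the forward direction, I would start from an orientation of $\mathcal K$ and induce one on every 2-cell. Each 2-cell $D$ orients its boundary, so each edge of $G$ incident to $D$ acquires a direction; for boundary edges of $\mathcal K$ (the $L_i$-pieces) this is unambiguous, and for each $\tau_i$-edge the two induced orientations are opposite by orientability, so a single convention (say, always use the $\mathcal D_i$-side) yields a well-defined orientation on every edge of $G$. The claim is that at every vertex $v$ the resulting edge orientations form either a pure source or pure sink pattern. This is verified by a local case analysis at the four vertex types: a bridge point in $\Sigma_C$ (where three wedges of a local disk of $\mathcal K$ meet along three $\tau$-edges), a bridge point in $\Sigma_i$ (where two wedges of a local half-disk of $\mathcal K$ meet along a $\tau_i$-edge, with $L_{i-1}\cup L_i$ forming the diameter in $\partial\mathcal K$), a binding intersection (a degree-two vertex where a single oriented arc passes through), and a binding endpoint (trivially a source or a sink by having degree one).

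For the reverse direction, a source/sink labelling of $G$ gives a direction to every edge. I would orient each 2-cell $D$ so that the induced orientation of $\partial D$ agrees with the given edge directions; the local analysis of the forward direction, read in reverse, guarantees that this choice is consistent around every vertex on $\partial D$. Along each interior $\tau_i$-edge, the two adjacent 2-cells must then induce opposite orientations (since each matches the given edge orientation via a 2-cell on opposite sides), which is exactly the compatibility condition for $\mathcal K$ to be orientable. The two constructions are inverse, so orientability of $\mathcal K$ is equivalent to the existence of a source/sink labelling of $G$.

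The main obstacle will be the local case analysis at bridge points, particularly at bridge points in $\Sigma_i$ where the half-disk structure forces the $L_{i-1}, L_i$ edges to behave like a single arc of $\partial\mathcal K$ passing through $v$ while the $\tau_i$-edge divides the half-disk into the two 2-cells $\mathcal D_{i-1}$ and $\mathcal D_i$. The correct orientation convention must interact with the outward-normal-first orientation of $\partial\mathcal K$ induced from $\mathcal K$ in precisely the right way to make all three edges simultaneously incoming or outgoing at $v$; handling this, together with the analogous (and somewhat more symmetric) computation at bridge points in $\Sigma_C$, is the content of the proof.
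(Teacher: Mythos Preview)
Your overall strategy—building a CW structure on $\mathcal K$ from the pseudo-shadow diagram and relating orientability to a source/sink labelling of the 1-skeleton—is sound and considerably more explicit than the paper's one-line proof (which simply cites the orientation conventions of Subsection~\ref{diagram_orientation}). However, there is a genuine gap in your proposed construction of the edge orientations.

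You propose to orient each edge by the boundary orientation of an adjacent 2-cell, using the $\mathcal D_i$-side for $\tau_i$-edges and the unique adjacent disk for $L_i$-edges. But at a bridge point $p\in\Sigma_i$, the two boundary edges $L_{i-1}$ and $L_i$ together form the diameter of the local half-disk of $\mathcal K$; the compatible boundary orientations of $\mathcal D_{i-1}$ and $\mathcal D_i$ then force $L_{i-1}$ and $L_i$ to point in \emph{opposite} directions at $p$—one incoming and one outgoing—regardless of any convention chosen for $\tau_i$. The same obstruction appears at every binding intersection: a single arc passing through has one piece incoming and one outgoing. Hence no rule of the form ``boundary orientation of a chosen adjacent 2-cell'' can ever produce a source/sink labelling, and your anticipated local case analysis cannot close.

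The missing ingredient is precisely what the paper's orientation conventions encode: the signs in $\partial Y_i = (-\Sigma_i)\cup\Sigma_{i+1}$ and $\partial H_i = (-\Sigma_C)\cup\Sigma_i$ force a reversal of the edge orientation whenever the edge lies in a surface carrying a minus sign in the boundary of the relevant 3-dimensional piece. At $p\in\Sigma_i$, the edge $L_i$ (coming from $Y_i$, where $\Sigma_i$ appears with a minus) gets flipped while $L_{i-1}$ (coming from $Y_{i-1}$, where $\Sigma_i$ appears with a plus) does not; this aligns them, and the $\tau_i$ edge then falls into place. The same mechanism makes every binding intersection a source or sink. Your argument would go through once these flips are incorporated, but identifying them is not a matter of refining the $\tau_i$ convention—it requires exactly the content of Subsection~\ref{diagram_orientation}.
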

\begin{proof}This follows directly from the orientation convention of Subsection \ref{diagram_orientation}.
\end{proof}
\begin{eg}Figure \ref{Moebius_CP2_diagram_figure} is a pseudo-shadow diagram of a M\"obius strip in $\CP^2 - B^4$. To see that the surface represented by the diagram is non-orientable, observe that the vertices $b_1, b_2, b_3$ lie in a 3-cycle $\gamma$ in the graph $\bigcup_i (L_i \cup \tau_i)$, and therefore cannot be consistently labelled as sources or sinks. (For such a labelling to exist, any cycle must have even length.)
\end{eg}

\begin{figure}	
	\noindent\hspace{0px}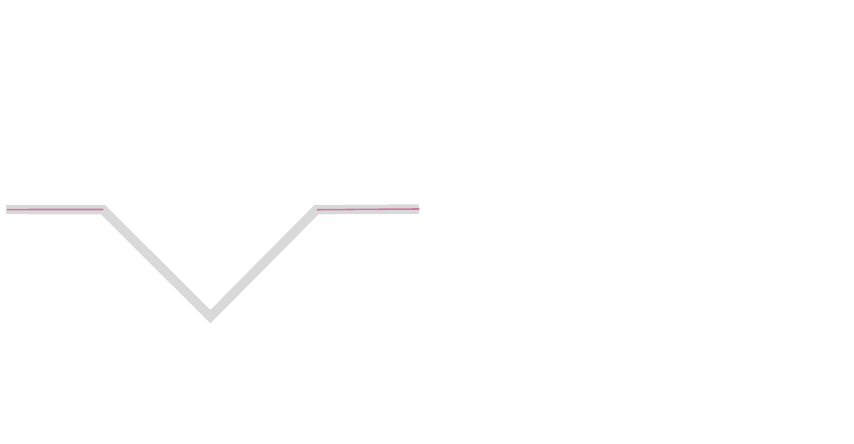
	\caption{A pseudo-shadow diagram of a M\"obius strip in pseudo-bridge position in $\CP^2 - B^4$.}
\label{Moebius_CP2_diagram_figure}
\end{figure}

\begin{prop}The Euler characteristic of a surface realising a pseudo-shadow diagram $(\mathcal D, \tau_i,$ $L_i)$ is given by $F - |\mathcal{B}|/2$, where $F$ is the total number of components in each of the unlinks $L_i \cup \tau_i \cup \tau_{i+1}$ in $\partial X_i$, and $\mathcal B$ consists of all bridge points.
\end{prop}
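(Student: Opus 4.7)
The plan is to equip $\mathcal K$ with a CW structure inherited from the pseudo-bridge trisection and compute $\chi$ by counting cells. The 2-cells will be the disk components of $\mathcal D_1 \cup \mathcal D_2 \cup \mathcal D_3$, the 1-cells will be the arc components of $\tau_1 \cup \tau_2 \cup \tau_3 \cup L_1 \cup L_2 \cup L_3$, and the 0-cells will be the bridge points in $\mathcal B = \mathcal B_C \cup \mathcal B_1 \cup \mathcal B_2 \cup \mathcal B_3$. First I would observe that this really is a CW decomposition: each component of $\mathcal D_i$ is a disk whose boundary is a component of the unlink $\tau_i \cup \tau_{i+1} \cup L_i$ in $\partial X_i$, so the 1-cells and 0-cells glue up as required. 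The 2-cell count is then exactly $F$ by definition, and the 0-cell count is $|\mathcal B|$.

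The crux is to count 1-cells, and for this I would establish that \emph{every} bridge point has degree exactly $3$ in the 1-skeleton. At a point $p \in \mathcal B_C$, transversality of $\mathcal K$ with $\Sigma_C$ in the 4-manifold $X$ lets me choose local coordinates in which $\mathcal K$ is a 2-disk in the normal plane to $\Sigma_C$; this normal plane is partitioned into three wedges by the three 3-dimensional pieces $H_1, H_2, H_3$ meeting along $\Sigma_C$, so $\mathcal K$ is locally cut into three wedges by three arcs, one in each $\tau_j$. At a point $p \in \mathcal B_i \subset \Sigma_i \subset \partial X$, a similar local model — now inside the half-space at $\partial X$ — shows that $\mathcal K$ is a neatly embedded half-disk cut by $\partial X$ and by $H_i$ into three arcs, one in each of $\tau_i, L_{i-1}$, and $L_i$. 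So every vertex has degree $3$, and the handshake lemma gives $E = 3|\mathcal B|/2$.

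Combining these counts,
\[
\chi(\mathcal K) = V - E + F = |\mathcal B| - \tfrac{3}{2}|\mathcal B| + F = F - |\mathcal B|/2,
\]
as claimed. The main obstacle is the two local-model verifications in the middle paragraph, which ensure the degree-$3$ claim for both flavours of bridge point; once this is pinned down, the rest of the argument is an elementary Euler-characteristic calculation. A small sanity check worth including is that the 1-skeleton has no further vertices, which follows from the pseudo-shadow diagram axioms because the arcs in $\tau_i$ and $L_i$ have endpoints only in $\mathcal B$.
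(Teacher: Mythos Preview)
Your proposal is correct and follows essentially the same approach as the paper: both build a CW structure on $\mathcal K$ with vertices $\mathcal B$, edges the tangle arcs, and faces the disk-tangle components, then compute $V-E+F$ using that every bridge point has degree $3$. The paper's proof is terser because the $3$-valence is already recorded in item (5) of the definition of a pseudo-shadow diagram, whereas you rederive it from local transversality models; your extra care is sound but not strictly needed.
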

\begin{proof}This follows from the usual formula of the Euler characteristic of a cellular complex. The number of 0-cells is $|\mathcal {B}|$, and the number of 2-cells is $F$. Since bridge-points are 3-valent in a pseudo-shadow diagram, the number of 1-cells is $3|\mathcal{B}|/2$. The alternating sum of these quantities gives the desired result.
\end{proof}

\begin{eg}\label{LHT_example_topology}Figure \ref{LHT_CP2_diagram_figure} is a valid pseudo-shadow diagram, and represents a surface in $\CP^2-B^4$ with boundary a left-handed trefoil in $S^3$. To see the trefoil, one can proceed as in Example \ref{lefthandedness}. The links $L_i \cup \tau_i \cup \tau_{i+1}$ each have two components, so $F=6$ in the Euler characteristic formula. On the other hand, there are 10 bridge points, $\chi(\mathcal K) = 6 - 5 = 1$. Since the surface has one boundary component, we know that it has genus $0$. We conclude that the pseudo-shadow diagram represents a disk in $\CP^2-B^4$ with boundary a trefoil. (Figuring out exactly \emph{which} disk up to homology takes some more work.)
\end{eg}

\begin{figure}
	\noindent\hspace{0px}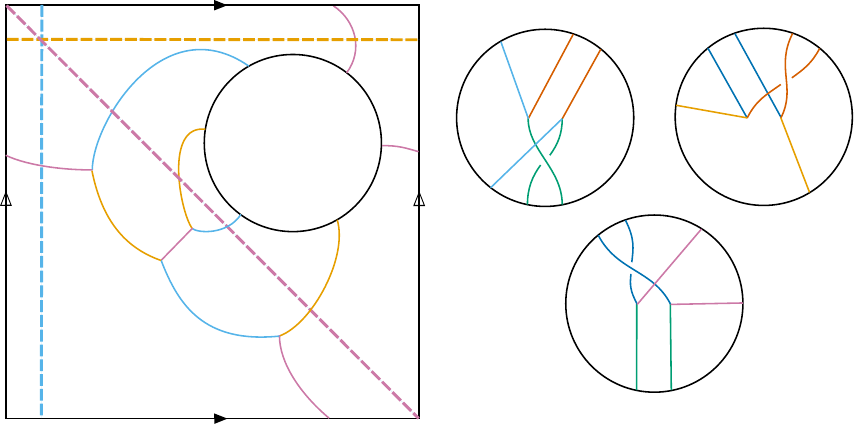
	\caption{A slice disk of a left-handed trefoil in $\CP^2$, representing $2[-\CP^1] \in H_2(\CP^2)$.}
\label{LHT_CP2_diagram_figure}
\end{figure}

Finally we describe how the homology class $[\mathcal K]$ in $H_2(X)$ or $H_2(X, \partial X)$ may be deduced from a pseudo-shadow diagram. The idea is to use Lefschetz duality to convert the problem into one of computing intersections of $[\mathcal K]$ with other homology classes in $H_2(X)$ or $H_2(X, \partial X)$. These intersections can then be computed diagrammatically.

The details are as follows. We first define the ``intersection form'' 
$$Q_X: H_2(X)\otimes H_2(X, \partial X) \to \Z$$
by $(\alpha, \beta) \mapsto \alpha^*(\beta))$ where $\alpha^* \in H^2(X, \partial X)$ is the lift of $\alpha$ to $H^2(X, \partial X)$ via the cap product with the fundamental class, i.e.
$$\alpha = \alpha^* \frown [X].$$

Next we show that this intersection form is unimodular provided $H_1(X,\partial X)$ is torsion-free. By the universal coefficient theorem, $\Hom(H_2(X, \partial X), \Z)$ is isomorphic to $H^2(X, \partial X)$. Specifically, this is because $\text{Ext}^1(H_1(X, \partial X))$ vanishes when $H_1(X, \partial X)$ is torsion-free. Applying this isomorphism, the intersection form can be expressed as
$$Q_X' : H_2(X) \to H^2(X, \partial X),$$
where the map is exactly that arising in Lefschetz duality. Therefore $Q_X'$ is invertible, and equivalently $Q_X$ is unimodular.

When $Q_X$ is unimodular, we can use intersection data to deduce homology classes. Suppose $e_1,\ldots, e_n$ are generators of $H_2(X, \partial X)$. Then knowing each of the
$$Q_X([\mathcal K], e_1),\ldots, Q_X([\mathcal K], e_n) \in \Z$$
is enough to determine $[\mathcal K]$. The final step is to express these intersections diagrammatically. 

Suppose $\mathcal K, E_1, \ldots, E_n$ are surfaces in pseudo-bridge position in $X$. Then the intersections $Q_X([\mathcal K], [E_j])$ are the sums of the intersections in each $X_i$. In each $X_i$, the intersections of disk tangles are equivalently the linking numbers of their boundaries. Therefore computing the intersections reduces to computing linking numbers of links in pseudo-shadow diagrams. This process is described in detail in the following example.

\begin{eg}\label{computing_intersection}Figure \ref{LHT_CP2_diagram_figure} is a pseudo-shadow diagram of a disk $\mathcal K$ in $\CP^2 - B^4$ with boundary a left-handed trefoil, introduced in Example \ref{LHT_example_topology}. Next we determine the homology class $[\mathcal K] \in H_2(\CP^2 - B^4)$ via the procedure described above. We know that $H_2(\CP^2 - B^4) \cong H_2(\CP^2-B^4, S^3)$ is generated by $\CP^1$, so our problem reduces to computing linking numbers of links in a pseudo-shadow diagram of $\CP^1$ with those in Figure \ref{LHT_CP2_diagram_figure}. We now describe, in detail, how to carry out this calculation in the sector $X_2$. (The procedure is similar for sectors $X_1$ and $X_3$.)

\begin{figure}
	\noindent\hspace{0px}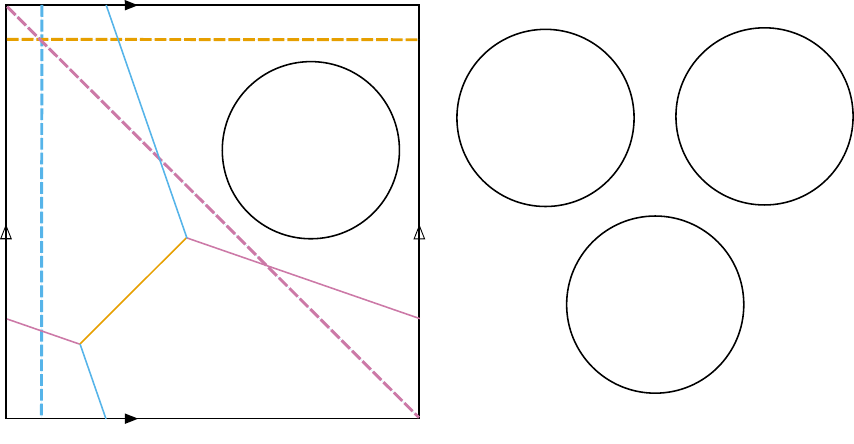
	\caption{A pseudo-shadow diagram of $\CP^1$ in $\CP^2 - B^4$. Note that the surface has empty boundary. Referenced in Example \ref{computing_intersection}.}
\label{CP1_CP2_diagram_figure}
\end{figure}

\begin{figure}
	\noindent\hspace{0px}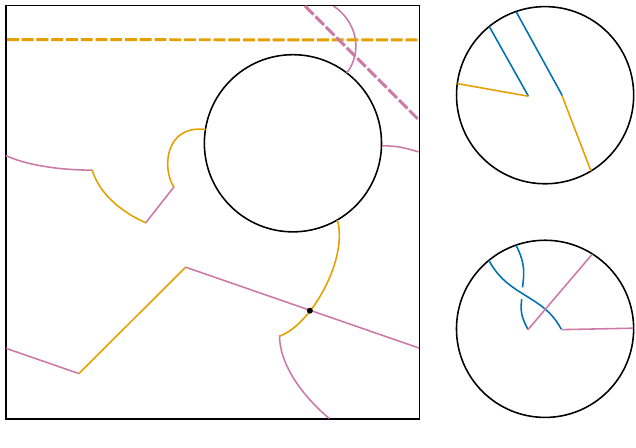
	\caption{Restrictions of pseudo-shadow diagrams for $\CP^1$ and a disk $\mathcal K$ to $X_2$, referenced in Example \ref{computing_intersection}. This is the first step in computing the intersection of $\CP^1$ and $\mathcal K$ (restricted to $X_2$).}
\label{CP1_LHT_intersection}
\end{figure}

\begin{figure}
	\noindent\hspace{0px}
\begingroup%
  \makeatletter%
  \providecommand\color[2][]{%
    \errmessage{(Inkscape) Color is used for the text in Inkscape, but the package 'color.sty' is not loaded}%
    \renewcommand\color[2][]{}%
  }%
  \providecommand\transparent[1]{%
    \errmessage{(Inkscape) Transparency is used (non-zero) for the text in Inkscape, but the package 'transparent.sty' is not loaded}%
    \renewcommand\transparent[1]{}%
  }%
  \providecommand\rotatebox[2]{#2}%
  \newcommand*\fsize{\dimexpr\f@size pt\relax}%
  \newcommand*\lineheight[1]{\fontsize{\fsize}{#1\fsize}\selectfont}%
  \ifx\svgwidth\undefined%
    \setlength{\unitlength}{396.92427902bp}%
    \ifx\svgscale\undefined%
      \relax%
    \else%
      \setlength{\unitlength}{\unitlength * \real{\svgscale}}%
    \fi%
  \else%
    \setlength{\unitlength}{\svgwidth}%
  \fi%
  \global\let\svgwidth\undefined%
  \global\let\svgscale\undefined%
  \makeatother%
  \begin{picture}(1,0.53353664)%
    \lineheight{1}%
    \setlength\tabcolsep{0pt}%
    \put(0,0){\includegraphics[width=\unitlength,page=1]{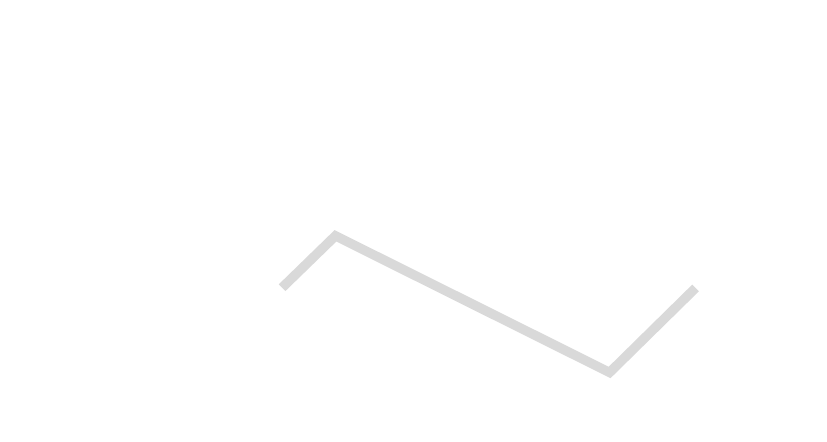}}%
    \put(0.38598615,0.07288021){\makebox(0,0)[lt]{\lineheight{1.25}\smash{\begin{tabular}[t]{l}$\Sigma_C$\end{tabular}}}}%
    \put(0.87532948,0.284542){\makebox(0,0)[lt]{\lineheight{1.25}\smash{\begin{tabular}[t]{l}$\Sigma_2$\end{tabular}}}}%
    \put(0.09762117,0.00418256){\makebox(0,0)[lt]{\lineheight{1.25}\smash{\begin{tabular}[t]{l}$\Sigma_3$\end{tabular}}}}%
    \put(0,0){\includegraphics[width=\unitlength,page=2]{CP1_LHT_in_CP2_step1.pdf}}%
    \put(0.21138828,0.48469251){\makebox(0,0)[lt]{\lineheight{1.25}\smash{\begin{tabular}[t]{l}$\alpha_2$\end{tabular}}}}%
    \put(0.19417289,0.37044632){\makebox(0,0)[lt]{\lineheight{1.25}\smash{\begin{tabular}[t]{l}$\alpha_3$\end{tabular}}}}%
    \put(0.58975507,0.12060654){\makebox(0,0)[lt]{\lineheight{1.25}\smash{\begin{tabular}[t]{l}$\gamma'$\end{tabular}}}}%
    \put(0,0){\includegraphics[width=\unitlength,page=3]{CP1_LHT_in_CP2_step1.pdf}}%
  \end{picture}%
\endgroup%

	\caption{Cutting and pasting $\Sigma_C$ (from Figure \ref{CP1_LHT_intersection}) so that the curves $\alpha_2$ and $\alpha_3$ become standard in the torus. The curve $\gamma'$ has been highlighted to differentiate it from $\gamma$. Referenced in Example \ref{computing_intersection}.}
\label{CP1_LHT_intersection_step_1}
\end{figure}

\begin{figure}
	\noindent\hspace{0px}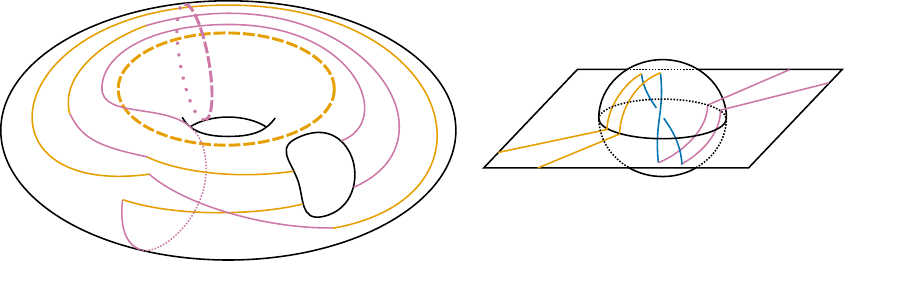
	\caption{Left: identifying the ``edges" of $\Sigma_C$. Right: the local structure of $\Sigma_2 \cup \Sigma_3 \cup \Sigma_C$. Referenced in Example \ref{computing_intersection}.}
\label{CP1_LHT_intersection_step_2}
\end{figure}

\begin{figure}
	\noindent\hspace{0px}
\begingroup%
  \makeatletter%
  \providecommand\color[2][]{%
    \errmessage{(Inkscape) Color is used for the text in Inkscape, but the package 'color.sty' is not loaded}%
    \renewcommand\color[2][]{}%
  }%
  \providecommand\transparent[1]{%
    \errmessage{(Inkscape) Transparency is used (non-zero) for the text in Inkscape, but the package 'transparent.sty' is not loaded}%
    \renewcommand\transparent[1]{}%
  }%
  \providecommand\rotatebox[2]{#2}%
  \newcommand*\fsize{\dimexpr\f@size pt\relax}%
  \newcommand*\lineheight[1]{\fontsize{\fsize}{#1\fsize}\selectfont}%
  \ifx\svgwidth\undefined%
    \setlength{\unitlength}{238.02471979bp}%
    \ifx\svgscale\undefined%
      \relax%
    \else%
      \setlength{\unitlength}{\unitlength * \real{\svgscale}}%
    \fi%
  \else%
    \setlength{\unitlength}{\svgwidth}%
  \fi%
  \global\let\svgwidth\undefined%
  \global\let\svgscale\undefined%
  \makeatother%
  \begin{picture}(1,0.51321005)%
    \lineheight{1}%
    \setlength\tabcolsep{0pt}%
    \put(0,0){\includegraphics[width=\unitlength,page=1]{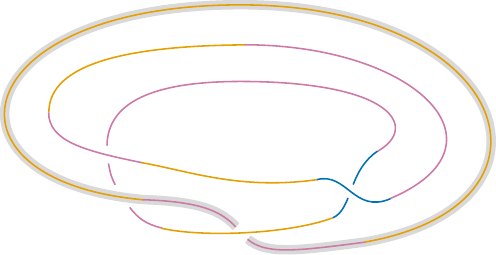}}%
    \put(0.2797233,0.27618801){\makebox(0,0)[lt]{\lineheight{1.25}\smash{\begin{tabular}[t]{l}$\gamma_1$\end{tabular}}}}%
    \put(0.1338383,0.31936263){\makebox(0,0)[lt]{\lineheight{1.25}\smash{\begin{tabular}[t]{l}$\gamma_2$\end{tabular}}}}%
    \put(0.04436314,0.42385384){\makebox(0,0)[lt]{\lineheight{1.25}\smash{\begin{tabular}[t]{l}$\gamma'$\end{tabular}}}}%
  \end{picture}%
\endgroup%

	\caption{Lifting $\gamma$ and $\gamma'$ to $\R^3 = S^3 - \{\text{pt}\}$. Referenced in Example \ref{computing_intersection}.}
\label{CP1_LHT_intersection_step_3}
\end{figure}
	\begin{enumerate}
		\item Figure \ref{CP1_CP2_diagram_figure} shows a pseudo-shadow diagram of $\CP^1$. Restricting this diagram to $(\Sigma_C, \Sigma_2, \Sigma_3,$ $\alpha_2, \alpha_3, \tau_2', \tau_3')$ produces a triple Heegaard diagram of $S^3$, equipped with the data of $\tau_2'$ and $\tau_3'$. Similarly, the pseudo-shadow diagram of Figure \ref{LHT_CP2_diagram_figure} can be restricted to a triple Heegaard diagram of $S^3$, equipped with the data of $L_2, \tau_2$, and $\tau_3$. The realisation map lifts both $\gamma = L_2\cup \tau_2 \cup \tau_3$ and $\gamma' = \tau_2' \cup \tau_3'$ to unlinks in $S^3$. Our goal is to compute the linking number $lk(\gamma, \gamma')$. Figure \ref{CP1_LHT_intersection} shows each of the aforementioned triple Heegaard diagrams overlayed as a single diagram. There is an apparent crossing, denoted $x$, between $\gamma$ and $\gamma'$. Subsequent diagrammatic manipulations will determine how this crossing impacts $lk(\gamma, \gamma')$.
		\item To lift the curves $\gamma$ and $\gamma'$ to $S^3$, we must first build the underlying $S^3$ from the triple Heegaard diagram in Figure \ref{CP1_LHT_intersection}. To this end, we ``cut and paste" the representation of $\Sigma_C$ so that the curves $\alpha_2$ and $\alpha_3$ are standard axes on the torus. The result is depicted in Figure \ref{CP1_LHT_intersection_step_1}.
		\item Next, we glue the various pieces of the triple Heegaard diagram together. Figure \ref{CP1_LHT_intersection_step_2} shows the result of identifying the ``edges" of $\Sigma_C$ on the left, and the result of gluing each of $\Sigma_2, \Sigma_3$, and $\Sigma_C$ along their common boundary on the right. Note also that $L_2$ has been lifted from $\Sigma_2 \cup \Sigma_3$ to the sector $Y_2$. This step is subtle: by the orientation convention, $\tau_2$ lies above $L_2$, and $L_2$ lies above $\tau_3$. Indeed, they are depicted as such in the right hand diagram, and this is consistent with the orientations induced by the conventions of Subsection \ref{diagram_orientation}. Moreover, the convention requires that the self-crossing of $L_2$ is \emph{reversed} when lifting to $Y_2$.
		\item The arcs $\tau_2'$ lift to the sector of the trisection of $S^3$ lying ``outside" $\Sigma_C$, and the arcs $\tau_3'$ lift to the sector ``inside" $\Sigma_C$. The final result of lifting all arcs to the sectors $H_2, H_3, Y_2$ is shown in Figure \ref{CP1_LHT_intersection_step_3}. From here, it can be seen that $lk(\gamma, \gamma') = \pm 1$, where $\gamma$ is an unlink with two components, $\gamma_1$ and $\gamma_2$. The sign of the linking number depends on how the surface $\mathcal K$ is oriented.
	\end{enumerate}

	In summary, we have established that $lk(\gamma, \gamma') = lk(L_2\cup\tau_2\cup\tau_3, \tau_2'\cup\tau_3') = \pm 1$. Further, this linking number is equal to the signed intersection of $\mathcal K \cap X_2$ with $\CP^1 \cap X_2$. Repeating this process in $X_1$ and $X_3$, those linking numbers are found to be 0 and $\pm 1$ respectively. Therefore $Q_{\CP^2}([\mathcal K], [\CP^1]) = 0 \pm 1 \pm 1 = \pm2$. Since $Q_{\CP^2} = (1)$, we deduce that $[\mathcal K] = 2H$ where $H = \pm [\CP^1]$ is a generator of $H_2(\CP^2)$. This is consistent with the slice disk of a left handed trefoil in $\CP^2$ described in Example 2.4 of \cite{ManMarPic}. Fixing an orientation of the left handed trefoil would fix the sign of $[\mathcal K]$. 
\end{eg}

\bibliographystyle{plain}
\bibliography{trisections.bib}
\end{document}